\title[Microlocal analysis]{Microlocal analysis and evolution
  equations: Lecture Notes from 2008 CMI/ETH Summer School\\ \today}
\author{Jared Wunsch}
\address{Department of Mathematics\\Northwestern University\\Evanston
  IL 60208}
\newcommand{\SC}{\text{sc}}
\newcommand{\Psisc}{\Psi_{\SC}}
\newcommand{\WFsc}{\WF_{\SC}}
\newcommand{\Hsc}{H_{\SC}}
\newcommand{\tpsi}{\tilde{\psi}}
\renewcommand{\Im}{\operatorname{Im}}
\renewcommand{\Re}{\operatorname{Re}}
\numberwithin{equation}{section}
\numberwithin{theorem}{section}
\theoremstyle{remark}
\newtheorem{exercise}{Exercise}
\newtheorem{Exercise}[exercise]{Exercise*}
\numberwithin{exercise}{section}
\DeclareMathOperator{\Diff}{Diff}
\newcommand{\fcal}{\mathcal{F}}
\newcommand{\lag}{\mathcal{L}}
\newcommand{\hamvf}{\mathsf{H}}
\DeclareMathOperator{\Vol}{Vol}
\DeclareMathOperator{\Id}{Id}
\DeclareMathOperator{\singsupp}{singsupp}
\DeclareMathOperator{\Tr}{Tr}
\DeclareMathOperator{\liptic}{ell}
\newcommand{\nabb}{\mbox{$\nabla \mkern-13mu /$\,}}
\newcommand{\schwartz}{\mathcal{S}}
\newcommand{\loc}{\text{loc}}
\newcommand{\tot}{\text{tot}}
\newcommand{\sigmatot}{\sigma_{\tot}}
\newcommand{\hsigma}{\hat{\sigma}}
\DeclareMathOperator{\Opl}{Op_{\ell}}
\DeclareMathOperator{\Opr}{Op_{\emph{r}}}
\newcommand{\cl}{\text{cl}}
\newcommand{\KN}{\text{KN}}
\newcommand{\F}{\mathcal{F}}
\newcommand{\Tsc}{{}^{\SC}T}
\newcommand{\Tscstar}{{}^{\SC}T^*}
\newcommand{\Tbarstar}{\overline{T}^*}
\newcommand{\Tbarscstar}{{}^{\SC}\overline{T}^*}
\newcommand{\Vsc}{\mathcal{V}_{\SC}}
\DeclareMathOperator{\bo}{b}
\newcommand{\Vb}{\mathcal{V}_{\bo}}
\DeclareMathOperator{\esssupp}{ess-supp}
\begin{document}
\maketitle
\tableofcontents
\section{Introduction}
The point of these notes, and the lectures from which they came, is
not to provide a rigorous and complete introduction to microlocal
analysis---many good ones now exist---but rather to give a quick and
impressionistic feel for how the subject is used in practice.  In
particular, the philosophy is to crudely axiomatize the
machinery of pseudodifferential and Fourier integral operators, and
then to see what problems this enables us to solve.  The primary 
emphasis is on application of commutator methods to yield
microlocal energy estimates, and on simple parametrix constructions in
the framework of the calculus of Fourier integral operators; the
rigorous justification of the computations is kept as much as possible
inside a black box.  By contrast, the author has found that lecture
courses focusing on a careful development of the inner workings of this
black box can (at least when he is the lecturer) too easily bog down
in technicality, leaving the students with no notion of why one
might suffer through such agonies.  The ideal education, of course,
includes both approaches\dots

A wide range of more comprehensive and careful treatments of this
subject are now available.  Among those that the reader might want to
consult for supplementary reading are \cite{Melrose:notes},
\cite{GrigisSjostrand}, \cite{Shubin}, \cite{Taylor:pseudors},
\cite{Taylor:PDE2}, \cite{DimassiSjostrand}, \cite{Zworski},
\cite{Martinez} (with the last three focusing on the
``semi-classical'' point of view, which is not covered here).
H\"ormander's treatise \cite{Hormander:v1}, \cite{Hormander:v2},
\cite{Hormander:v3}, \cite{Hormander:v4} remains the definitive
reference on many aspects of the subject.

Some familiarity with the theory of distributions (or a willingness to
pick it up) is a prerequisite for reading these notes, and fine
treatments of this material include \cite{Hormander:v1} and
\cite{FriedlanderJoshi}.  (Additionally, an appendix sets out the
notation and most basic concepts in Fourier analysis and distribution
theory.)

Much of the hard technical work in what follows has been shifted
onto the reader, in the form of exercises.  Doing at least some of
them is essential to following the exposition.  The exercises that
are marked with a ``star'' are in general harder or longer than those
without, in some cases requiring ideas not developed here.

The author has many debts to acknowledge in the preparation of these notes.
The students at the CMI/ETH summer school were the ideal audience, and
provided helpful suggestions on the exposition, as well as turning up
numerous errors and inconsistencies in the notes (although many more surely
remain).  Discussions with Andrew Hassell, Michael Taylor, Andr\'as Vasy,
and Maciej Zworski were very valuable in the preparation of these lectures
and notes.  Rohan Kadakia kindly corrected a number of errrors in the final
version of the manuscript.  An error in the statement of
Theorem~\ref{theorem:DG} was kindly pointed out by Amir Vig
(subsequent to the publication of the original version of these notes)
and has been corrected in this version.
Finally, the author wishes to gratefully
acknowledge Richard Melrose, who taught him most of what he knows of this
subject: a strong influence of Melrose's own excellent lecture notes
\cite{Melrose:notes} can surely be detected here.

The author would like to thank the Clay Mathematics Institute and ETH
for their sponsorship of the summer school, and MSRI for its
hospitality in Fall 2008, while the notes were being revised.  The
author also acknowledges partial support from NSF grant DMS-0700318.

\section{Prequel: energy methods and commutators}\label{section:prequel}
This section is supposed to be like the part of an action movie before the
opening credits: a few explosions and a car chase to get you in the right
frame of mind, to be followed by a more careful exposition of plot.

\subsection{The Schr\"odinger equation on $\RR^n$}\label{subsec:introschrodinger}
Let us consider a solution $\psi$ to the \emph{Schr\"odinger equation}
on $\RR \times \RR^n:$
\begin{equation}\label{scheqn}
i^{-1}\pa_t \psi  -\nabla^2 \psi=0.
\end{equation}
The complex-valued ``wavefunction'' $\psi$ is supposed to describe the time-evolution
of a free quantum particle (in rather unphysical units).  We'll use
the notation $\Lap=-\nabla^2$ (note the sign: it makes the operator
positive, but is a bit non-standard).

Consider, for any self-adjoint operator $A,$ the quantity
$$
\ang{A\psi,\psi}
$$ where $\ang{\cdot,\cdot}$ is the sesquilinear $L^2$-inner product on
$\RR^n.$ In the usual interpretation of QM, this is the expectation value
of the ``observable'' $A.$ Since $\pa_t \psi=i\nabla^2 \psi=-i\Lap \psi,$ we can easily
find the time-evolution of the expectation of $A:$
$$
\pa_t \ang{A \psi,\psi}= \ang{\pa_t(A)\psi,\psi}+ \ang{A(-i\Lap) \psi, \psi}+ \ang{A\psi, (-i\Lap) \psi}.
$$
Now, using the self-adjointness of $\Lap$ and the sesquilinearity, we
may rewrite this as
\begin{equation}\label{Ehrenfest}
\pa_t \ang{A \psi,\psi}=\ang{\pa_t(A)\psi,\psi}+ i\ang{[\Lap,A] \psi, \psi}
\end{equation}
where $[S,T]$ denotes the \emph{commutator} $ST-TS$ of two operators
(and $\pa_t(A)$ represents the derivative of the operator itself,
which may have time-dependence).  Note that this computation is a bit
bogus in that it's a formal manipulation that we've done without regard to
whether the quantities involved make sense, or whether the formal
integration by parts (i.e.\ the use of the self-adjointness of $\Lap$)
was justified.  For now, let's just keep in mind that this makes sense
for sufficiently ``nice'' solutions, and postpone the technicalities.

If you want to learn things about $\psi(t,x),$ you might try to use
\eqref{Ehrenfest} with a judicious choice of $A.$ For instance,
setting $A=\Id$ shows that the $L^2$-norm of $\psi(t,\cdot)$ is
conserved.  Additionally, choosing $A=\Lap^k$ shows that the $H^{k}$
norm is conserved (see the appendix for a definition of this norm).
In both these examples, we are using the fact that $[\Lap,A]=0.$

A more interesting example might be the following: set $A=\pa_r,$ the
radial derivative.  We may write the Laplace operator on $\RR^n$ in polar
coordinates as
$$
\Lap=-\pa_r^2-\frac{n-1}r\pa_r+\frac{\Lap_\theta}{r^2}
$$
where $\Lap_\theta$ is the Laplacian on $S^{n-1};$ thus we compute
$$
[\Lap,\pa_r]=2\frac{\Lap_\theta}{r^3}-\frac{(n-1)}{r^2}\pa_r.
$$
\begin{exercise}
Do this computation!  (Be aware that $\pa_r$ is not a differential operator
with smooth coefficients.)
\end{exercise}
This is kind of a funny looking operator.  Note that $\Lap$ is
self-adjoint, and $\pa_r$ wants to be anti-self-adjoint, but isn't
quite.  In fact, it makes more sense to replace $\pa_r$ by
$$A=(1/2)(\pa_r-\pa_r^*)=\pa_r+\frac{n-1}{2r},$$ which corrects $\pa_r$ by a lower-order term
to be anti-self-adjoint.
\begin{exercise}
Show that $$\pa_r^*=-\pa_r-\frac{n-1}r.$$
\end{exercise}
Trying again, we get by dint of a little work:
\begin{equation}\label{morawetzcomm}
[\Lap,\pa_r+\frac{n-1}{2r}]= \frac{2 \Lap_\theta}{r^3}+
\frac{(n-1)(n-3)}{2 r^3},
\end{equation}
\emph{provided $n,$ the dimension, is at least $4.$}
\begin{exercise}
  Derive \eqref{morawetzcomm}, where you should think of both sides as
  operators from Schwartz functions to tempered distributions (see the
  appendix for definitions).  What happens if $n=3$?  If $n=2$?  Be
  very careful about differentiating negative powers of $r$ in the
  context of distribution theory\dots
\end{exercise}

Why do we like \eqref{morawetzcomm}?  Well, it has the very lovely
feature that both summands on the RHS are \emph{positive operators}.
Let's plug this into \eqref{Ehrenfest} and integrate on a finite time
interval:
\begin{align*}
i^{-1}\ang{A\psi,\psi}\big\rvert_0^T &= \int_0^T \ang{\frac{2 \Lap_\theta}{r^3}\psi,\psi}+
\ang{\frac{(n-1)(n-3)}{2 r^3}\psi,\psi}\, dt\\ &=  \int_0^T
2\norm{r^{-1/2}\nabb\psi}^2 \, dt + \frac{(n-1)(n-3)}{2} \norm{r^{-3/2}\psi}^2\, dt,
\end{align*}
where $\nabb$ represents the (correctly scaled) angular gradient:
$\nabb = r^{-1}\nabla_\theta$, where $\nabla_\theta$ denotes the gradient
on $S^{n-1}.$

Now, we're going to turn the way we use this estimate on its head,
relative to what we did with conservation of $L^2$ and $H^k$ norms:
the \emph{left}-hand-side can be estimated by a constant times the
$H^{1/2}$ norm of the initial data.  This should be at least plausible
for the derivative term, since morally, half a derivative can be
dumped on each copy of $u,$ but is complicated by the fact that
$\pa_r$ is not a differential operator on $\RR^n$ with smooth
coefficients.  The following (somewhat lengthy) pair of exercises goes
somewhat far afield from the main thrust of these notes, but is
necessary to justify our $H^{1/2}$ estimate.

In the sequel, we employ the useful notation $f \lesssim g$ to
indicate that $f\leq C g$ for some $C \in \RR^+;$ when $f$ and $g$
are Banach norms of some function, $C$ is always supposed to be
independent of the function.
\begin{Exercise}\label{exercise:interpolation}\
\begin{enumerate}\item
Verify that for $u \in \schwartz(\RR^n)$ with $n \geq 3,$ $\abs{\ang{\pa_r u,u}} \lesssim
\norm{u}_{H^{1/2}}^2.$

\textsc{Hint:} Use the fact that
$$
\pa_r  = \sum \abs{x}^{-1} x^j \pa_{x^j}.
$$
Check that $x/\abs{x}$ is a bounded multiplier on both $L^2$ and
$H^1,$ and hence, by interpolation and duality, on $H^{-1/2}.$  An
efficient treatment of the interpolation methods you will need can be
found in \cite{Taylor:PDE1}.  You
will probably also need to use \emph{Hardy's inequality} (see Exercise~\ref{exercise:hardy}).
\item
Likewise, show that the $\ang{r^{-1}u,u}$ term is bounded by a multiple of
$\norm{u}_{H^{1/2}}^2$ (again, use Exercise~\ref{exercise:hardy}).
\end{enumerate}
\end{Exercise}

\begin{exercise}\label{exercise:hardy}
Prove \emph{Hardy's inequality}: if $u \in
H^1(\RR^n)$ with $n \geq 3,$ then
$$
\frac{(n-2)^2}{4} \int \frac{\abs{u}^2}{r^2} \, dx \leq  \int
\abs{\nabla u }^2 \, dx.
$$

\textsc{Hint:} In polar coordinates, we have for $u \in \schwartz(\RR^n)$
$$
\int \frac{\abs{u}^2}{r^2} \, dx =\int_{S^{n-1}} \int_0^\infty
\abs{u}^2 r^{n-3} \, dr \, d\theta.
$$
Integrate by parts in the $r$ integral, and apply Cauchy-Schwarz.
\end{exercise}

So we obtain, finally, the \emph{Morawetz inequality}: if $\psi_0 \in
H^{1/2}(\RR^n),$ with $n\geq 4$ then
\begin{equation}\label{Morawetz}
2\int_0^T
\norm{r^{-1/2}\nabb\psi}^2 \, dt + \frac{(n-1)(n-3)}{2} \int_0^T
\norm{r^{-3/2}\psi}^2\, dt \lesssim \norm{\psi_0}_{H^{1/2}}^2.
\end{equation}
Now remember that we've been working rather formally, and there's no
guarantee that either of the terms on the LHS is finite a priori.  But
the RHS is finite, so \emph{since both terms on the LHS are positive,
  both must be finite, provided $\psi_0 \in H^{1/2}$.}  (This is a dangerously sloppy way of
reasoning---see the exercises below.)  So we get, at one stroke two
nice pieces of information: if $\psi_0 \in H^{1/2},$ we obtain the
finiteness of both terms on the left.

Let's try and understand these.  The term
$$
\int_0^T\norm{r^{-3/2}\psi}^2\, dt
$$
gives us a weighted estimate, which we can write as
\begin{equation}\label{weightedest}
\psi \in r^{3/2} L^2([0,T]; L^2(\RR^{n}))
\end{equation}
for any $T,$ or, more briefly, as
\begin{equation}\label{condensed}
\psi \in r^{3/2}L^2_\loc L^2.
\end{equation}
(The right side of \eqref{weightedest} denotes the Hilbert space of functions that are of the form $r^{3/2}$
times an element of the space of $L^2$ functions on $[0,T]$ with
values in the Hilbert space $L^2(\RR^n);$ note that whenever we use
the condensed notation \eqref{condensed}, the Hilbert space for the
time variables will precede that for the spatial variables.)  So
$\psi$ can't ``bunch up'' too much at the origin.  Incidentally, our
whole setup was translation invariant, so in fact we can conclude
$$
\psi \in \abs{x-x_0}^{3/2}L^2_\loc L^2
$$
for \emph{any} $x_0 \in \RR^n,$ and $\psi$ can't bunch up too much anywhere
at all.

How about the other term?  One interesting thing we can do is the
following: Choose $x_0,$ $x_1$ in $\RR^n,$ and let $\mathsf{X}$ be a smooth
vector field with support disjoint from the line $\overline{x_0x_1}.$
Then we may write $\mathsf{X}$ in the form
$$
\mathsf{X}=\mathsf{X}_0+\mathsf{X}_1
$$
with $\mathsf{X}_i$ smooth, and $\mathsf{X}_i\perp (x-x_i)$ for
$i=0,1;$ in other words, we split $\mathsf{X}$ into angular vector
fields with respect to the origin of coordinates placed at $x_0$ and
$x_1$ respectively.  Moreover, we can arrange that the coefficients of
$\mathsf{X}_i$ be bounded in terms of the coefficients of $\mathsf{X}$
(provided we bound the support uniformly away from
$\overline{x_0x_1}$).  Thus, we can estimate for any such vector field
$\mathsf{X}$ and any $u \in \mathcal{C}_c^\infty(\RR^n)$
$$
\int \abs{\mathsf{X}u}^2 \, dx \lesssim \int \abs{\abs{x-x_0}^{-1/2}\nabb_0 u}^2 \, dx+\int \abs{\abs{x-x_1}^{-1/2}\nabb_1 u}^2 \, dx
$$
where $\nabb_i$ is the angular gradient with respect to the origin of
coordinates at $x_i.$ Since for a solution of the Schr\"odinger
equation, \eqref{Morawetz} tells us that the time integral of each of
these latter terms is bounded by the squared $H^{1/2}$ norm of the
initial data, we can assemble these estimates with the choices
$\mathsf{X}=\chi \pa_{x^j}$ for any $\chi \in \CcI(\RR^n)$ to obtain
$$
\int_0^T \norm{\chi\nabla \psi}^2 \, dt \lesssim \norm{\psi_0}_{H^{1/2}}^2.
$$
In more compact notation, we have shown that
$$
\psi_0 \in H^{1/2} \Longrightarrow \psi \in L^2_\loc H^1_\loc.
$$
This is called the \emph{local smoothing estimate}.  It says that on
average in time, the solution is locally half a derivative smoother than the
initial data was; one consequence is that in fact, with initial data
in $H^{1/2},$ the solution is in $H^1$ in space at \emph{almost every
  time}.

\begin{exercise}
Work out the Morawetz estimate in dimension $3.$  (This is in many
ways the nicest case.)  Note that our techniques yield no estimate in
dimension $2,$ however.
\end{exercise}

In fact, if all we care about is the local smoothing
estimate (and this is frequently the case) there is an easier
commutator argument that we can employ to get just that estimate.  Let
$f(r)$ be a function on $\RR^+$ that equals $0$ for $r<1,$ is
increasing, and equals $1$ for $r\geq 2.$
Set $A=f(r) \pa_r$ and employ \eqref{Ehrenfest} just as we did before.
The commutant $f(r)\pa_r$ (as opposed to just $\pa_r$) has the virtue
of actually being a smooth vector field on $\RR^n.$  So we can write
$$
[\Lap, f(r)\pa_r] = -2f'(r) \pa_r^2 + 2r^{-3}f(r) \Lap_\theta+ R
$$
where $R$ is a first order operator with coefficients in $\CcI(\RR^n).$
As we didn't bother to make our commutant anti-self-adjoint, we might like
to fix things up now by rewriting
$$
[\Lap, f(r)\pa_r] = -2\pa_r^* f'(r) \pa_r + 2r^{-3} f(r) \Lap_\theta+R'
$$ where $R'$ is of the same type as $R.$ Note that both main terms on
the right are now nonnegative operators, and also that the term
containing $\pa_r^*$ is not, appearances to the contrary, singular at
the origin, owing to the vanishing of $f'$ there.  Thus we obtain, by
another use of \eqref{Ehrenfest},
\begin{multline}
\int_0^T \norm{\sqrt{f'(r)} \pa_r \psi}^2 \, dt+\int_0^T
\norm{\sqrt{f(r)} r^{-1/2}\nabb \psi}^2 \, dt \\ \lesssim \int_0^T
\abs{\ang{R'\psi,\psi}}\, dt + \abs{\ang{f(r) \pa_r \psi,\psi}}|_0^T. 
\end{multline} Now the first term on the RHS is bounded by a multiple of
$\norm{\psi_0}_{H^{1/2}}^2$ (as $R'$ is first order with coefficients
in $\CcI(\RR^n)$); the
second term is likewise (since $f$ is bounded with compactly supported
derivative, and zero near the origin).  This gives us an
estimate of the desired form, valid on any compact subset of $\supp f
\cap \supp f',$ which can be translated to contain any point.

\begin{exercise}\label{exercise:schrodinger}
This exercise is on giving some rigorous underpinnings to some of the
formal estimates above.  It also gets you thinking about the
alternative, Fourier-theoretic, picture of how might think about
solutions to the Schr\"odinger equation.\footnote{If you want to work
  hard, you might try to derive the local smoothing estimate from the
  explicit form of the Schr\"odinger kernel derived below.  It's not
  so easy!}
\begin{enumerate}
\item
Using the Fourier transform,\footnote{See the appendix for a very brief review of the
  Fourier transform acting on tempered distributions and $L^2$-based
  Sobolev spaces.} show that if $\psi_0 \in L^2(\RR^n),$
there exists a unique solution $\psi(t,x)$ to \eqref{scheqn} with
$\psi(0,x)=\psi_0.$
\item
As long as you're at it, use the Fourier transform to derive the
explicit form of the solution: show that 
$$
\psi(t,x) =\psi_0* K_t
$$
where $K_t$ is the ``Schr\"odinger kernel;'' give an explicit formula
for $K_t.$
\item
Use your explicit formula for $K_t$ to show that if $\psi_0 \in L^1$
then $\psi(T,x) \in L^\infty (\RR^n)$ for any $T \neq 0.$
\item Show using the first part, i.e.\ by thinking about the solution
  operator as a Fourier multiplier, that if $\psi_0 \in H^s$ then
  $\psi(t,x) \in L^\infty(\RR_t; H^s),$ hence give another proof that $H^s$ regularity is
  conserved.
\item Likewise, show that the Schr\"odinger evolution in $\RR^n$ takes
  Schwartz functions to Schwartz functions.
\item \emph{Rigorously} justify the Morawetz inequality if $\psi_0 \in
  \schwartz(\RR^n).$  Then use a density argument to rigorously
  justify it for $\psi_0 \in H^{1/2}(\RR^n).$
\end{enumerate}
\end{exercise}

\subsection{The Schr\"odinger equation with a metric}

Now let us change our problem a bit.  Say we are on an $n$-dimensional
manifold, or even just on $\RR^n$ endowed with a complete
non-Euclidean Riemannian metric $g.$ There is a canonical choice for
the Laplace operator in this setting:
$$
\Lap=d^*d
$$
where $d$ takes functions to one-forms, and the adjoint is with
respect to $L^2$ inner products on both (which of course also involve
the volume form associated to the Riemannian metric).  This yields, in coordinates,
\begin{equation}\label{Laplacian}
\Lap=-\frac{1}{\sqrt{g}} \pa_{x^i} g^{ij} \sqrt{g} \pa_{x^j},
\end{equation}
where $\sum_{i,j=1}^n g^{ij} \pa_{x^i} \otimes \pa_{x^j}$ is the dual metric on forms
(hence $g^{ij}$ is the inverse matrix to $g_{ij}$) and $g$ denotes
$\det(g_{ij}).$
\begin{exercise}
Check this computation!
\end{exercise}
\begin{exercise}
Write the Euclidean metric on $\RR^3$ in spherical coordinates, and use
\eqref{Laplacian} to compute the Laplacian in spherical coordinates.
\end{exercise}

We can now consider the Schr\"odinger equation with the Euclidean
Laplacian replaced by this new ``Laplace-Beltrami'' operator.  By
standard results in the spectral theory of self-adjoint
operators,\footnote{The operator $\Lap$ is manifestly formally
  self-adjoint, but in fact turns out to be essentially self-adjoint
  on $\CcI(X)$ for $X$ any complete manifold.}
there is still a solution in $L^\infty(\RR; L^2)$ given any $L^2$
initial data---this generalizes our Fourier transform computation in
Exercise~\ref{exercise:schrodinger}---but its form and its
properties are much harder to read off.

Computing commutators with this operator is a little trickier than in
the Euclidean case, but certainly feasible; you might certainly try
computing $[\Lap, \pa_r+(n-1)/(2r)]$ where $r$ is the distance from
 some fixed point.
\begin{exercise}
Write out the Laplace operator in Riemannian polar coordinates, and
compute $[\Delta, \pa_r+(n-1)/(2r)]$ near $r=0.$
\end{exercise}

But what happens when we get beyond the injectivity radius?  Of
course, the $r$ variable doesn't make any sense any more.  Moreover,
if we try to think of $\pa_r$ as the operator of differentiating
``along geodesics emanating from the origin'' then at a conjugate
point to $0,$ we have the problem that we're somehow supposed to be be
simultaneously differentiating in two different directions.  One fix
for this problem is to employ the calculus of pseudodifferential operators,
which permits us to construct operators that behave differently
depending on what direction we're looking in: we can make operators
that separate out the different geodesics passing through the
conjugate point, and do different things along them.

\subsection{The wave equation}\label{section:wave}

Let
$$
\Box u\equiv (\pa_t^2+\Lap)u =0
$$ denote the wave equation on $\RR\times \RR^n$ (recall that
$\Lap=-\sum \pa_{x^i}^2$). For simplicity of
notation, \emph{let us consider only real-valued solutions in this section}.

The usual route to thinking about the energy of a solution to the wave
equation is as follows.  We consider the integral
\begin{equation}\label{energyint}
0=\int_0^T \ang{\Box u, \pa_t u} \, dt
\end{equation}
where $\ang{\cdot,\cdot}$ is the inner product on $L^2(\RR^n).$  Then integrating
by parts in $t$ and in $x$ gives the conservation of
$$
\norm{\pa_t u}^2+\norm{\nabla u}^2.
$$

We can recast this formally as a commutator argument, if we like, by
considering the commutator with the indicator function of an interval:
$$
0=\int_{\RR} \ang{[\Box,1_{[0,T]}(t)\pa_t] u, u} \, dt.
$$
The integral vanishes, at least formally, by self-adjointness of
$\Box$---it is in fact a better idea to think of this whole thing as
an inner product on $\RR^{n+1}:$
$$
\ang{[\Box,1_{[0,T]}(t)] \pa_tu, u}_{\RR^{n+1}}.
$$
Having gone this far, we might like to replace the indicator function
with something smooth, to give a better justification for this formal integration by
parts; let $\chi(t)$ be a smooth approximator to the indicator
function with $\chi'=\phi_1-\phi_2$ with $\phi_1$ and $\phi_2$
nonnegative bump functions supported respectively in $(-\ep,\ep)$ and
$(T-\ep,T+\ep),$ with $\phi_2(\cdot)=\phi_1(\cdot-T)$  Let $A=\chi(t)
\pa_t+\pa_t \chi(t).$  Then we have $$[\Box,A]=2 \pa_t
\chi'\pa_t+ \pa_t^2 \chi'+\chi'\pa_t^2,$$ and by
(formal) anti-self-adjointness of $\pa_t$ (and the fact that $u$ is
assumed real),
\begin{align*}
0=\ang{[\Box,A]u,u}_{\RR^{n+1}} 
&= -2\ang{\chi' \pa_t u, \pa_t u}_{\RR^{n+1}}  +2\ang{\chi'u, \pa_t^2 u}_{\RR^{n+1}} \\
&= -2\ang{\chi' \pa_t u, \pa_t u}_{\RR^{n+1}}  +2\ang{\chi'u, \nabla^2 u}_{\RR^{n+1}} \\
&= -2\ang{\chi' \pa_t u, \pa_t u}_{\RR^{n+1}}  -2\ang{\chi'\nabla, \nabla u}_{\RR^{n+1}} \\
&= - 2\int_{\RR^{n+1}} \phi_1(t)
  (\abs{u_t}^2+\abs{\nabla u}^2)\, dt\, dx\\
&+ 2\int_{\RR^{n+1}} \phi_2(t)
  (\abs{u_t}^2+\abs{\nabla u}^2)\, dt\, dx.
\end{align*}
Thus, the energy on the time interval $[T-\ep,T+\ep]$ (modulated by
the cutoff $\phi_2$) is the same as
that in the time interval $[-\ep,\ep]$ (modulated by $\phi_1$).

We can get fancier, of course.  Finite propagation speed is usually
proved by considering the variant of \eqref{energyint}
$$
\int_{-T_2}^{-T_1}\int_{\abs{x}^2\leq t^2} \Box u\, \pa_t u \, dx \, dt,
$$
with $0<T_1<T_2.$
Integrating by parts gives negative boundary terms, and we find that
the energy in 
$$
\{t=-T_1,\abs{x}^2 \leq T_1^2\}
$$
is bounded by that in
$$
\{t=-T_2,\abs{x}^2 \leq T_2^2\}.
$$
Hence if the solution has zero Cauchy data (i.e.\ value,
time-derivative) on the latter surface, it
also has zero Cauchy data on the former.
\begin{exercise}
Go through this argument to show finite propagation speed.
\end{exercise}

Making this argument into a commutator argument is messier, but still possible:
\begin{Exercise}
Write a positive commutator version of the proof of finite propagation
speed, using smooth cutoffs instead of integrations by parts.  (An
account of energy estimates with smooth temporal cutoffs, in the general setting of
Lorentzian manifolds, can be found in \cite[Section 3]{Vasy:AdS}.)
\end{Exercise}

%% We can again replicate this argument by considering the commutator
%% $$
%% 0=\ang{[\Box, A] u,u}_{\RR^{n+1}}
%% $$
%% where $A=\chi(t) \phi(t^2-x^2);$ here $\chi$ is a cutoff in
%% $[-T_2-\ep,-T_1+\ep]$ analogous to that used above, while $\phi$ is a
%% cutoff of the form
%% $$
%% \phi(s) = \begin{cases} e^{-1/s} & s>0\\ 0 & s\leq 0.
%% \end{cases}
%% $$
%% But this gets a bit messy.

There is of course also a Morawetz estimate for the wave equation!
(Indeed, this was what Morawetz originally proved.)
\begin{Exercise}
Derive (part of) the Morawetz estimate: Let $u$ solve
$$
\Box u=0, (u, \pa_t u)|_{t=0}= (f,g)
$$
on $\RR^n,$ with $n\geq 4.$ Show that
$$
\norm{r^{-3/2} u}_{L^2_\loc(\RR^{n+1})} \lesssim \norm{f}_{H^1}^2+ \norm{g}_{L^2}^2;
$$
this is analogous to the weight part of the Morawetz estimate we
derived for the Schr\"odinger equation.  There is in fact no need for
the local $L^2$ norm---the global spacetime estimate works too:
prove this estimate, and use it to draw a conclusion about the
long-time decay of a solution to the wave equation with Cauchy data
in $\CcI(\RR^n)\oplus \CcI(\RR^n).$

\textsc{Hint}: consider $\ang{[\Box, \chi(t)(\pa_r + (n-1)/(2r))]u,u}_{\RR^{n+1}}.$
\end{Exercise}
\section{The pseudodifferential calculus}

Recall that we hoped to describe a class of operators enriching the
differential operators that would, among other things, enable us to
deal properly with the local smoothing estimate on manifolds, where
conjugate points caused our commutator arguments with ordinary
differential operators to break down.  One solution to this problem
turns out to lie in the calculus of pseudodifferential operators.

\subsection{Differential operators}
What kind of a creature is a pseudodifferential operator?  Well, first
let's think more seriously about \emph{differential} operators.  A linear
differential operator of order $m$ is something of the form
\begin{equation}\label{diffop}
P=\sum_{\abs{\alpha} \leq m} a_\alpha (x) D^\alpha
\end{equation}
where $D_j=i^{-1} (\pa/\pa {x^j})$ and we employ ``multiindex notation:''
$$
D^\alpha=D_{1}^{\alpha_1}\dots D_{n}^{\alpha_n},
$$
$$
\abs{\alpha}=\sum \alpha_j.
$$
We will always take our coefficients to be smooth:
$$
a_\alpha \in \CI(\RR^n).
$$
We let
$$
\Diff^m(\RR^n)
$$
denote the collection of all differential operators of order $m$ on
$\RR^n$ (and will later employ the analogous notation on a manifold).

If $P \in \Diff^m(\RR^n)$ is given by \eqref{diffop}, we can associate
with $P$ a function by formally turning differentiation in $x^j$ into
a formal variable $\xi_j$ with $(\xi_1,\dots,\xi_n) \in \RR^n:$
$$
p(x,\xi)=\sum a_\alpha(x) \xi^\alpha.
$$
This is called the ``total (left-) symbol'' of $P;$ of course, knowing
$p$ is equivalent to knowing $P.$ Note that $p(x,\xi)$ is a rather
special kind of a function on $\RR^{2n}:$ it is actually polynomial in
the $\xi$ variables with smooth coefficients.  Let us write
$p=\sigmatot(P).$

Note that $$\sigmatot:P \mapsto p$$ is \emph{not} a ring homomorpism: we have
$$
PQ=\sum_{\alpha,\beta} p_\alpha(x) D^\alpha q_\beta(x) D^\beta,
$$
and if we expand out this product to be of the form
$$
\sum_\gamma c_\gamma(x) D^\gamma,
$$
then the coefficients $c_\gamma$ will involve all kinds of derivatives
of the $q_\beta$'s.  This is a pain, but on the other hand life would
be pretty boring if the ring of differential operators were
commutative.

If we make do with less, though, composition of operators doesn't
look so bad.  We let $\sigma_m(P),$ the \emph{principal symbol} of
$P,$ just be the symbol of the top-order parts of $P:$
$$
\sigma_m(P) =\sum_{\abs{\alpha}=m} a_\alpha(x) \xi^\alpha.
$$ Note that $\sigma_m(P)$ is a \emph{homogeneous} degree-$m$ polynomial in
$\xi,$ i.e., a polynomial such that
$\sigma_m(P)(x,\lambda\xi)=\lambda^m\sigma_m(P)(x,\xi)$ for $\lambda \in
\RR.$  As a result, we can reconstruct it from its value at
$\abs{\xi}=1,$ and it makes sense for many purposes to just consider it as
a (rather special) smooth function on $\RR^n \times S^{n-1}.$
It turns out to make more invariant sense to regard the principal
symbol as a homogeneous polynomial on $T^*\RR^n,$ so that once we have
scaled away the action of $\RR^+,$ we may regard it as a function on
$S^*\RR^n,$ the \emph{unit} cotangent bundle of
$\RR^n,$ which is simply defined as
$T^*\RR^n/\RR^+$ (or identified with the bundle of unit covectors in,
say, the Euclidean metric).
To clarify when we are talking about the symbol on $S^*\RR^n,$ we
define\footnote{The reader is warned that this notation is not a
  standard one.}
$$
\hsigma_m(P) = \sigma_m(P)|_{\abs{\xi}=1} \in \CI(S^*\RR^n).
$$

Now it \emph{is} the case that the principal symbol is a homomorphism:
\begin{proposition}
For $P,Q$ differential operators of order $m$ resp.\ $m',$
$$
\sigma_{m+m'} (PQ)= \sigma_m(P) \sigma_{m'}(Q).
$$
(and likewise with $\hsigma$).
\end{proposition}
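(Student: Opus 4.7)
The plan is to compute $PQ$ directly in coordinates using the Leibniz rule, and read off which terms are responsible for the top-order symbol. Writing $P = \sum_{|\alpha|\leq m} p_\alpha(x) D^\alpha$ and $Q = \sum_{|\beta|\leq m'} q_\beta(x) D^\beta$, the product is $PQ = \sum_{\alpha,\beta} p_\alpha(x) D^\alpha(q_\beta(x) D^\beta \cdot)$. The Leibniz rule (applied iteratively, or via the multi-index formula $D^\alpha(fg) = \sum_{\gamma \leq \alpha} \binom{\alpha}{\gamma} D^\gamma f \, D^{\alpha-\gamma} g$) rewrites this as
\begin{equation*}
PQ = \sum_{\alpha,\beta} \sum_{\gamma \leq \alpha} \binom{\alpha}{\gamma} p_\alpha(x)\, D^\gamma q_\beta(x)\, D^{\alpha-\gamma+\beta}.
\end{equation*}

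The key observation is a bookkeeping one: the order of the differential operator $p_\alpha(x) D^\gamma q_\beta(x) D^{\alpha-\gamma+\beta}$ is $|\alpha| - |\gamma| + |\beta|$. Thus terms contributing to order exactly $m+m'$ must simultaneously satisfy $|\alpha| = m$, $|\beta| = m'$, and $|\gamma| = 0$; that is, only the ``no derivative falls on a coefficient of $Q$'' terms from the top parts of $P$ and $Q$ survive. All other terms, in particular every term with $|\gamma| \geq 1$ (where a derivative from $P$ differentiates a coefficient of $Q$), produce operators of order strictly less than $m+m'$ and therefore do not contribute to $\sigma_{m+m'}(PQ)$.

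Collecting the surviving terms, the order-$(m+m')$ part of $PQ$ is
\begin{equation*}
\sum_{|\alpha|=m,\, |\beta|=m'} p_\alpha(x) q_\beta(x) D^{\alpha+\beta},
\end{equation*}
whose total symbol is the polynomial $\sum_{|\alpha|=m,|\beta|=m'} p_\alpha(x) q_\beta(x) \xi^{\alpha+\beta}$. This factors as $\bigl(\sum_{|\alpha|=m} p_\alpha(x)\xi^\alpha\bigr)\bigl(\sum_{|\beta|=m'} q_\beta(x)\xi^\beta\bigr) = \sigma_m(P)\sigma_{m'}(Q)$, which is exactly the identity to be shown. The statement for $\hat\sigma$ then follows by restricting both sides to $|\xi|=1$.

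There is no real obstacle here; the only subtlety is the bookkeeping that distinguishes ``order of $PQ$ as a differential operator'' from ``sum $|\alpha|+|\beta|$ of multi-index lengths.'' The content of the proposition is precisely that these agree at the top, because the only way to lose order is to spend a derivative differentiating a coefficient, and such spending is forbidden at the leading level.
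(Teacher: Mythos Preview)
Your proof is correct and is exactly the direct Leibniz-rule computation one would expect; the paper in fact leaves this proposition as an exercise (``Verify this!'') rather than supplying its own argument, so there is nothing further to compare.
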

\exercise{Verify this!}

Moreover, the principal symbol has another lovely property that the
total symbol lacks: it behaves well under change of variables.  If
$y=\phi(x)$ is a change of variables, with $\phi$ a diffeomorphism,
and if $P$ is a differential operator in the $x$ variables, we can of course define
a pushforward of $P$ by
$$
(\phi_* P) f= P(\phi^* f)
$$
Then in particular,
$$
\phi_* (D_{x^j}) = \sum_k\frac{\pa y^k}{\pa x^j}D_{y^k},
$$
hence 
%%%%%%%% This bit still a mess!
$$
\phi_*(D_x^\alpha)
=D_{x^1}^{\alpha_1}\dots D_{x^n}^{\alpha_n}
 = \left(\sum_{k_1=1}^n\frac{\pa y^{k_1}}{\pa x^1}D_{y^{k_1}}\right)^{\alpha_1}\dots \left(\sum_{k_n=1}^n\frac{\pa y^{k_n}}{\pa x^n}D_{y^{k_n}}\right)^{\alpha_n};
$$
when we again try to write this in our usual form, as a sum of
coefficients times derivatives, we end up with a hideous mess
involving high derivatives of the diffeomorphism $\phi.$  \emph{But,}
if we restrict ourselves to dealing with principal symbols alone, the
expression simplifies in both form and (especially) interpretation:
\begin{proposition}
If $P$ is a differential operator given by \eqref{diffop}, and
$y=\phi(x),$ then
$$
\sigma_m(\phi_* P)(y,\eta) = \sum_{\abs{\alpha}=m}
a_\alpha(\phi^{-1}(y))
\left(\sum_{k_1=1}^n\frac{\pa y^{k_1}}{\pa
    x^1}\eta_{{k_1}}\right)^{\alpha_1}\dots
\left(\sum_{k_n=1}^n\frac{\pa y^{k_n}}{\pa
    x^n}\eta_{{k_n}}\right)^{\alpha_n} 
$$
where $\eta$ are the new variables ``dual'' to the $y$ variables.
\end{proposition}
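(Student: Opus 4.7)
The plan is to reduce the proposition to the first-order case and then track which terms are actually of top order. The key observation is that commuting a smooth coefficient past a first-order operator $D_{y^k}$ produces a zeroth-order (multiplication) correction, so any rearrangement needed to put a product of first-order operators into ``coefficients-on-the-left'' form only generates lower-order terms that do not contribute to $\sigma_m$.

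First I would check the base case: for the single vector field $D_{x^j}$, the chain rule gives exactly
\[
\phi_*(D_{x^j}) = \sum_k \frac{\partial y^k}{\partial x^j} D_{y^k},
\]
where the coefficients are to be evaluated at $x = \phi^{-1}(y)$. Hence $\sigma_1(\phi_* D_{x^j})(y,\eta) = \sum_k (\partial y^k/\partial x^j)(\phi^{-1}(y))\, \eta_k$, as required.

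Next I would handle $\phi_*(D^\alpha)$ by writing $D^\alpha = D_{x^1}^{\alpha_1}\cdots D_{x^n}^{\alpha_n}$ and using multiplicativity of pushforward under composition:
\[
\phi_*(D^\alpha) = \prod_{j=1}^n \left(\sum_{k} \frac{\partial y^{k}}{\partial x^j}(\phi^{-1}(y))\, D_{y^{k}}\right)^{\alpha_j}.
\]
Expanding this product gives a sum of terms each of which is an interlaced string of smooth functions and $D_{y^\bullet}$'s of total degree $m=|\alpha|$. To reorganize it as $\sum_\beta c_\beta(y) D_y^\beta$ one commutes every coefficient to the left of every derivative; each such commutation replaces a $D_{y^k}$ acting on a function $f(y)$ by $f(y) D_{y^k}$ plus the zeroth-order operator $(D_{y^k} f)$. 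Thus every term arising from a nonzero commutator drops the total differentiation order by at least one and is invisible to $\sigma_m$. The top-order piece is the ``already ordered'' contribution, which is exactly the polynomial in $\eta$ displayed in the statement.

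Finally, for the full operator $P = \sum_{|\alpha|\le m} a_\alpha(x) D^\alpha$, the pushforward is $\sum_\alpha a_\alpha(\phi^{-1}(y))\, \phi_*(D^\alpha)$ (multiplication by a function pushes forward by simple substitution). Taking principal symbols is linear and, by the previous step, selects exactly the $|\alpha|=m$ terms with the claimed $\eta$-polynomial. The only place care is needed is the bookkeeping of orders during the commutations; but since each factor in the product is first order and the coefficients are smooth, a straightforward induction on $m$ (or on the number of transpositions needed to normal-order the expression) confirms that the discarded commutator terms all lie in $\mathrm{Diff}^{m-1}$, and hence the formula holds.
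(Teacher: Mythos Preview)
Your proof is correct and is exactly the natural argument; the paper itself does not supply a proof but leaves the proposition as an exercise, having already displayed the expanded form of $\phi_*(D_x^\alpha)$ and remarked that rewriting it in normal form produces a ``hideous mess.'' Your observation that every commutator $[D_{y^k},f]$ is zeroth order, so the mess lies entirely in $\Diff^{m-1}$, is precisely the point the exercise is asking for.
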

This corresponds exactly to the behavior of a function defined on the
cotangent bundle: if $\phi$ is a diffeomorphism from $\RR^n_x$ to
$\RR^n_y,$ then it induces a map $\Phi=\phi^*: T^*\RR_y^n \to T^*\RR_x^n$, and 
$$
\sigma_m(\phi_* P) = \Phi^* (\sigma_m(P)).
$$
\exercise{Prove the proposition, and verify this interpretation of
  it.}

Notwithstanding its poor properties, it is nonetheless a useful fact
that the map
$$
\sigmatot: P \mapsto p
$$
is one-to-one and onto polynomials with smooth coefficients; it
therefore has an inverse, which we shall denote
$$
\Opl: p \mapsto P,
$$
taking functions on $T^*\RR^n$ that happen to be polynomial in the
fiber variables to differential operators on $\RR^n.$ $\Opl$ is called a
``quantization'' map.\footnote{It is far from unique, as will become
  readily apparent.}  You may wonder about the $\ell$ in the subscript:
it stands for ``left,'' and has to do with the fact that we chose to
write differential operators in the form \eqref{diffop} instead of as
$$
P=\sum_{\abs{\alpha} \leq m}  D^\alpha a_\alpha (x),
$$
with the coefficients on the right.  This would have changed the definition
of $\sigmatot$ and hence of its inverse.

Note that $\Opl(x^j)=x^j$ (i.e.\ the operation of multiplication by $x^j$)
while $\Opl(\xi_j) = D_j.$

Why not, you might ask, try to extend this quantization map to a more
general class of functions on $T^*\RR^n$? This is indeed how we obtain
the calculus of pseudodifferential operators.  The tricky point to
keep in mind, however, is that for most purposes, it is asking too
much to deal with the quantizations of all possible functions on
$T^*\RR^n,$ so we'll deal only with a class of functions that are
somewhat akin to polynomials in the fiber variables.

\subsection{Quantum mechanics}\label{subsection:QM}
One reason why you might care about the existence of a quantization map,
and give it such a suggestive name, lies in the foundations of quantum
mechanics.

It is helpful to think about $T^*\RR^n$ as being a classical
\emph{phase space,} with the $x$ variables (in the base) being
``position'' and the $\xi$ variables (the fiber variables) as
``momenta'' in the various directions.  The general notion of
\emph{classical mechanics} (in its Hamiltonian formulation) is as
follows: The state of a particle is a point in the phase space
$T^*\RR^n,$ and moves along some curve in $T^*\RR^n$ as time evolves;
an \emph{observable} $p(x,\xi)$ is a function on the phase space that
we may evaluate at the state $(x,\xi)$ of our particle to give a
number (the observation).  By contrast, a \emph{quantum} particle is
described by a complex-valued \emph{function} $\psi(x)$ on $\RR^n,$
and a quantum observable is a self-adjoint \emph{operator} $P$ acting
on functions on $\RR^n.$ Doing the same measurement repeatedly on
identically prepared quantum states is not guaranteed to produce the
same number each time, but at least we can talk about the
\emph{expected value} of the observation, and it's simply
$$
\ang{P\psi,\psi}_{L^2(\RR^n)}.
$$
In the early development of quantum mechanics, physicists sought a way
to transform the classical world into the quantum world, i.e.\ of
taking functions on $T^*\RR^n$ to operators on\footnote{Well, they are
  not necessarily going to be defined on all of $L^2;$ the technical
  subtleties of unbounded self-adjoint operators will mostly not
  concern us here, however.} $L^2(\RR^n).$  This is, loosely speaking,
the process of ``quantization.''

We now turn to the question of describing the dynamics in the quantum
and classical worlds.
To describe how the point in phase space corresponding to a classical
particle in Hamiltonian mechanics evolves in time, we use the notion
of the
``Poisson bracket'' of two observables. In
coordinates, we can explicitly define
$$
\{f,g\} \equiv \sum \frac{\pa f}{\pa \xi_j} \frac{\pa g}{\pa x^j}-\frac{\pa f}{\pa x^j} \frac{\pa g}{\pa \xi_j}
$$
(this in fact makes invariant sense on any symplectic manifold).  The
map $g \mapsto \{f,g\}$ defines a vector field\footnote{We use the
  geometers' convention of identifying a vector and the
  directional derivative along it.} (the Hamilton vector
field) associated to $f:$
$$
\hamvf_f= \sum \frac{\pa f}{\pa \xi_j} \frac{\pa }{\pa x^j}-\frac{\pa f}{\pa x^j} \frac{\pa }{\pa \xi_j}
$$
The classical time-evolution is along the flow generated by the
Hamilton vector field associated to the \emph{energy function} of our
system, i.e.\ the flow along $\hamvf_h$ for some given $h \in
\CI(T^*\RR^n)$.  By contrast, the wavefunction for a quantum particle
evolves in time according to the Schr\"odinger equation
\eqref{scheqn}, with $-\nabla^2$ in general replaced by a self-adjoint
``Hamiltonian operator'' $H$ whose principal symbol is the energy
function $h.$\footnote{For honest physical applications, one really
  ought to introduce the semi-classical point of view here, carrying
  Planck's constant along as a small parameter and using an associated
  notion of principal symbol.}  By a mild generalization of
\eqref{Ehrenfest}, the time derivative of the expectation of an
observable $A$ is related to the commutator
$$
[H,A].
$$
One of the essential features of quantum mechanics is that
$$
\sigma_{m+m'}([H,A]) =i\{ \sigma_m(H),\sigma_{m'}(A)\},
$$
so that the time-evolution of the quantum observable $A$ is related to the
classical evolution of its symbol along the Hamilton flow;
this is the ``correspondence principle'' between classical and quantum
mechanics.\footnote{In the semi-classical setting, the correspondence
  principle tells that we can in a sense recover CM from QM in the
  limit when Planck's constant tends to zero.  What we have in this
  setting is a correspondence principle that works at high energies,
  i.e.\ in doing computations with high-frequency waves.}

\subsection{Quantization}

How might we construct a quantization map extending the usual
quantization on fiber-polynomials?

Let $\F$ denote the Fourier transform (see Appendix for details).
Then we may write, on $\RR^n,$
\begin{align*} (D_{x^j} \psi)(x) &= \F^{-1} \xi_j \F u = (2\pi)^{-n}\int e^{ix\cdot \xi}
\xi_j \int e^{-iy\cdot\xi} \psi(y)\, dy\, d\xi\\ &= \frac{1}{2\pi} \iint \xi_j
e^{i(x-y)\cdot\xi}\psi(y) \, dy\, d\xi
\end{align*}
Likewise, since $\F^{-1} \F = I,$ we of course have
$$
(x^j\psi)(x) = (2\pi)^{-n} \iint x^j
e^{i(x-y)\cdot\xi}\psi(y) \, dy\, d\xi
$$
Going a bit further, we see that at least for a fiber polynomial $a(x,\xi) = \sum
a_\alpha(x)  \xi^\alpha$ we have
\begin{equation}\label{quantization}
(\Opl(a)\psi)(x)= \sum a_\alpha(x) D^\alpha \psi(x) =  (2\pi)^{-n}\iint a(x,\xi)
e^{i(x-y)\cdot \xi}\psi(y) \, dy\, d\xi;
\end{equation}
stripping away the function $\psi,$ we can also simply write the
Schwartz kernel (see Appendix) of the operator $\Opl(a)$ as
$$
\kappa\big(\Opl(a)\big)= (2\pi)^{-n}\int a(x,\xi)
e^{i(x-y)\cdot \xi}\, d\xi.
$$
(Making sense of the integrals written above is not entirely trivial:
Given $\psi \in \schwartz(\RR^n),$ we can make sense of
the $\xi$ integral in \eqref{quantization}, which looks (potentially) divergent, by observing that
$$(1+\abs{\xi}^2)^{-k}(1+\Lap_y)^k e^{i(x-y)\cdot \xi}=e^{i(x-y)\cdot \xi}$$ for
all $k \in \NN;$ repeatedly integrating by parts in $y$ then moves the
derivatives onto $\psi.$ This method brings down an arbitrary negative
power of $(1+\abs{\xi}^2)$ at the cost of differentiating $\psi,$ thus
making the $\xi$ integral convergent.\footnote{This kind of
  integration by parts argument is ubiquitous in the subject, and
  somewhat scanted in these notes, relative to its true importance.}
Similar arguments yield continuity of $\Opl(a)$ as a map
$\schwartz(\RR^n) \to \schwartz(\RR^n),$ hence we can extend to let $\Opl(a)$ act on $\psi
\in \schwartz'$ by duality.  For more details, cf.\
\cite{Melrose:notes}.)
\begin{Exercise}
Verify the vague assertions in the parenthetical remark above.  You
may wish to consult, for example, the beginning of \cite{Hormander:FIO1}.
\end{Exercise}

This of course suggests that we use \eqref{quantization} as the
\emph{definition} of $\Opl(a)$ for more general observables (``symbols'') $a.$
And we do.  In $\RR^n,$ we set
\begin{equation}\label{quant2}
(\Opl(a)\psi)(x) = \frac{1}{(2\pi)^n} \int a(x,\xi)
e^{i(x-y)\cdot \xi}\psi(y) \, dy\, d\xi.
\end{equation}
We can define the pseudodifferential operators on $\RR^n$ to be just the range
of this quantization map on some reasonable set of symbols $a,$ to be discussed
below.

On a Riemannian manifold, we can make similar constructions global
by cutting off near the diagonal and using the exponential map and its
inverse.  The pseudodifferential operators are those whose Schwartz
kernels\footnote{For some remarks on the Schwartz kernel theorem, see
  the Appendix.}
near the diagonal look like \eqref{quant2} in local coordinates, and that
away from the diagonal are allowed to be arbitrary functions in
$\CI(X \times X).$  If the manifold is noncompact, we will
often assume further that operators are \emph{properly supported}, i.e.\
that both left- and right-projection give proper maps from the support
of the Schwartz kernel to $X.$

\subsection{The pseudodifferential calculus}

\begin{definition} A function $a$ on $T^*\RR^n$ is a \emph{classical symbol}
  of order $m$ if
\begin{itemize}
\item $a \in \CI(T^*\RR^n)$
\item On $\abs{\xi}>1,$ we have
$$
a(x,\xi)= \abs{\xi}^m \tilde{a}(x,\hat\xi, \abs{\xi}^{-1}),
$$
where $\tilde a$ is a smooth function on $\RR^n_x \times
S^{n-1}_{\hat{\xi}} \times \RR^+,$ and
$$
\hat \xi=\frac\xi{\abs{\xi}} \in S^{n-1}.
$$
\end{itemize}
We then write $a \in S^m_\cl (T^*\RR^n).$
\end{definition}

It is convenient to introduce the notation
$$
\ang{\xi}=(1+\abs{\xi}^2)^{1/2},
$$
so that $\ang{\xi}$ behaves like $\abs{\xi}$ near infinity, but is
smooth and nonvanishing at $0.$
A fancy way of saying that $a$ is a classical symbol of order $m$ is
thus to simply say that $a$ is equal to $\ang{\xi}^m$ times
\emph{a smooth function on the fiberwise radial compactification of
  $T^*\RR^n$,} denoted $\Tbarstar \RR^n.$ This compactification is
defined as follows: We can
  diffeomorphically identify $\RR_\xi^n$ with the interior of the unit
  ball by first mapping it to the upper hemisphere of $S^n \subset \RR^{n+1}$
  by mapping \begin{equation}\label{RC}\xi \mapsto \left(\frac \xi{\ang{\xi}},
    \frac 1{\ang{\xi}}\right)\end{equation} and identifying this latter
  space with the interior of the ball.  Then $1/\ang{\xi}$ becomes a
  \emph{boundary defining function,} i.e.\ one that cuts out the
  boundary nondegenerately as its zero-set; $1/\abs{\xi}$ is also a
  valid boundary defining function near the boundary of the ball,
  i.e.\ away from its singularity.

A very important consequence is that we can write a Taylor series
for $a$ near $\abs{\xi}^{-1}=0$ (the ``sphere at infinity'') to obtain
$$
a(x,\xi) \sim \sum_{j=0}^\infty a_{m-j}(x,\hat\xi) \abs{\xi}^{m-j},\quad
\text{with } a_{m-j} \in \CI(\RR^n \times S^{n-1}),
$$
and where the tilde denotes an ``asymptotic expansion''---truncating the
expansion at the $\abs{\xi}^{m-N}$ term gives an error that is
$O(\abs{\xi}^{m-N-1}).$\footnote{This does not, of course, mean that
  the series has to converge, or, if it converges, that it has to
  converge to $a:$ we never said $a$ had to be \emph{analytic} in
  $\abs{\xi}^{-1},$ after all.}

If $X$ is a Riemannian manifold, we may define $S^m_\cl(T^*X)$ in the same
fashion, insisting that these conditions hold in local
coordinates.\footnote{One should of course check that the conditions for
  being a classical symbol are in fact coordinate invariant.}

(For later use, we will also want symbols in a more general geometric
setting: if $E$ is a vector bundle we define
$$
S^m_\cl (E)
$$
to consist of smooth functions having an asymptotic expansion, as
above, in the fiber variables.  Often, we will be concerned with
trivial examples like $E=\RR^n_x \times \RR^k_\xi,$ where we will
usually use Greek letters to distinguish the fiber variables.)

The classical symbols are the functions that we will ``quantize'' into
operators using the definition \eqref{quant2}.  As with
fiber-polynomials, the symbol that we quantize to make a given
operator will transform in a complicated manner under change of
variables, but the \emph{top order} part of the symbol,
$a_m(x,\hat\xi) \in \CI(S^*\RR^n),$ will transform invariantly.

\begin{exercise}\label{KN}
We say that a function $a \in
\CI(T^*X)$ is a \emph{Kohn-Nirenberg symbol} of order $m$ on $T^*X$ (and
write $a \in S^m_\KN(T^*X)$) if for all $\alpha,\beta,$
\begin{equation}\label{KNestimate}
\sup \ang{\xi}^{\abs{\beta}-m}\lvert\pa_x^\alpha \pa_\xi^\beta a\rvert= C_{\alpha,\beta}<\infty.
\end{equation}

Check that $S^m_{\cl,c}(T^*\RR^n) \subset S^m_\KN(T^*\RR^n),$ where
the extra subscript $c$ denotes compact support in the base variables. Find examples
of Kohn-Nirenberg symbols compactly supported in $x$ that are not classical
symbols.\footnote{Note that most authors use $S^m$ to denote $S^m_\KN.$}
\end{exercise}
In the interests of full disclosure, it should be pointed out that it
is the Kohn-Nirenberg symbols, rather than the classical ones defined above, that
are conventionally used in the definition of the pseudodifferential calculus.

\

At this point, as discussed in the previous section, we are in a
position to ``define'' the pseudodifferential calculus as sketched at
the end of the previous section: it consists of operators whose
Schwartz kernels near the diagonal look like the quantizations of
classical symbols, and away from the diagonal are smooth.  While our
quantization procedure so far has been restricted to $\RR^n,$ the
theory is in fact cleanest on compact manifolds, so \emph{we shall
  state the properties of the calculus only for $X$ a compact
  $n$-manifold.}\footnote{Some remarks about the noncompact case will
  be found in the explanatory notes that follow.}  Most of the
properties continue to hold on noncompact manifolds provided we are a
little more careful either to control the behavior of the symbols at
infinity, or if we restrict ourselves to ``properly supported''
operators, where the projections to each factor of the support of the
Schwartz kernels give proper maps.  We will therefore not shy away
from pseudodifferential operators on $\RR^n,$ for instance, even
though they are technically a bit distinct; indeed we will only use
them in situations where we could in fact localize, and work on a
large torus instead.

Instead of trying to make a definition of the calculus and read off
its properties, we shall simply try to axiomatize these objects:

\textsc{The space of pseudodifferential operators $\Psi^* (X)$ on a
  compact manifold $X$ enjoys
  the following properties.}  (Note that this enumeration is followed
by further commentary.)  \renewcommand{\theenumi}{\Roman{enumi}}
\begin{enumerate}
\item\label{property:alg} (Algebra property) $\Psi^m(X)$ is a vector
  space for each $m \in \RR.$  If $A \in \Psi^m(X)$ and
  $B \in \Psi^{m'}(X)$ then $AB \in \Psi^{m+m'}(X).$  Also, $A^* \in
  \Psi^m(X).$  Composition of operators is associative and
  distributive. The identity operator is in $\Psi^0(X).$
\item\label{property:smoothing} (Characterization of smoothing
  operators) We let $$\Psi^{-\infty}(X)= \bigcap_m \Psi^m(X);$$ the operators
  in
  $\Psi^{-\infty}(X)$ are exactly those whose Schwartz kernels are
  $\CI$ functions on $X\times X,$ and can also be characterized by the
  property that they map distributions to smooth functions on $X.$
\item\label{property:symb} (Principal symbol homomorphism) There is family of linear ``principal symbol maps'' $\hsigma_m:
  \Psi^m(X) \to \CI(S^* X)$ such that if $A \in \Psi^m(X)$ and $B \in \Psi^{m'}(X),$
$$
\hsigma_{m+m'}(AB) = \hsigma_m(A) \hsigma_{m'}(B)
$$
and
$$
\hsigma_m(A^*) = \overline{\hsigma_m(A)}
$$ We think of the principal symbol either as a function on the unit
cosphere bundle $S^*X$ or as a homogeneous function of degree $m$ on
$T^*X,$ depending on the context, and we let $\sigma_m(A)$ denote the
latter.
\item\label{property:exact} (Symbol exact sequence) There is a short exact sequence
$$
0 \to \Psi^{m-1}(X)  \to \Psi^m(X) \stackrel{\hsigma_m}{\to} \CI(S^* X) \to 0,
$$
hence the principal symbol of order $m$ is $0$ if and only if an operator is of order $m-1.$
\item\label{property:quant}
There is a linear ``quantization map'' $\Op: S^m_\cl(T^* X) \to \Psi^m(X)$ such that
if $a \sim \sum_{j=0}^\infty a_{m-j}(x,\hat\xi) \abs{\xi}^{m-j} \in
S^m_\cl(T^*X)$ then
$$
\hsigma_m(\Op(a)) = a_m(x,\hat\xi).
$$
The map $\Op$ is onto, modulo $\Psi^{-\infty}(X).$
\item\label{property:commutator} (Symbol of commutator)
If $A\in \Psi^m (X)$, $B\in \Psi^{m'} (X)$ then\footnote{That the
  order is $m+m'-1$ follows
  from Properties \eqref{property:symb}, \eqref{property:exact}.}
$[A,B] \in \Psi^{m+m'-1}(X),$ and we have
$$
\sigma_{m+m'} ([A,B]) =  i\{\sigma_m(a),\sigma_{m'}(b)\}.
$$
\item\label{property:boundedness} ($L^2$-boundedness, compactness) If
$A=\Op(a) \in \Psi^0(X)$ then $A:L^2(X)
\to L^2(X)$ is bounded, with a bound depending on finitely many
constants $C_{\alpha,\beta}$ in \eqref{KNestimate}.
 Moreoever, if $A \in \Psi^m(X),$ then
$$
A \in \mathcal{L}(H^s(X), H^{s-m}(X)) \text{ for all }s\in \RR.
$$
Note in particular that $A$ maps $\CI(X)\to\CI(X).$ As a further
consequence, note that operators of negative
order are compact operators on $L^2(X).$
\item\label{property:summation} (Asymptotic summation)
Given $A_j \in \Psi^{m-j}(X),$ with $j \in \NN,$ there exists $A \in
\Psi^m(X)$ such that
$$
A\sim \sum_j A_j,
$$
which means that
$$
A - \sum_{j=0}^N A_j \in \Psi^{m-N-1}(X)
$$
for each $N.$

\item\label{property:microsupport} (Microsupport)
Let $A=\Op(a)+R,$ $R \in \Psi^{-\infty}(X).$  The set of $(x_0,\hat\xi_0) \in S^*X$ such that
$a(x,\xi)=O(\abs{\xi}^{-\infty})$ for $x,\hat\xi$ in some neighborhood
of $(x_0,\hat\xi_0)$ is well-defined, independent of our choice of quantization
map.  Its complement is called the \emph{microsupport} of $A,$ and is
denoted $\WF'A.$  We moreover have
$$
\WF' AB \subseteq \WF'A \cap \WF'B,\quad \WF'(A+B) \subseteq \WF'A \cup \WF'B,
$$
$$
\WF' A^* =\WF'A.
$$
The condition $\WF'A=\emptyset$ is equivalent to $A \in \Psi^{-\infty}(X).$
\end{enumerate}

\noindent\textsc{Commentary:}

\begin{enumerate}
\item[\eqref{property:alg}] If we begin by defining our operators on
  $\RR^n$ by the formula \eqref{quant2}, with $a \in
  S^m_\cl(T^*\RR^n)$, it is quite nontrivial to verify that the
  composition of two such operators is of the same type; likewise for
  adjoints.  Much of the work that we are omitting in developing the
  calculus goes into verifying this property.
\item[\eqref{property:smoothing}] On a non-compact manifold, it is
  only among, say, properly supported operators that elements of
  $\Psi^{-\infty}(X)$ are characterized by mapping distributions to
  smooth functions.
\item[\eqref{property:symb}] Note that there is no sensible,
  \emph{invariant}, way to associate, to an operator $A,$ a ``total
  symbol'' $a$ such that $A=\Op(a).$ As we saw before, a putative
  ``total symbol'' even for differential operators would be
  catastrophically bad under change of variables.  Moreover, as we
  also saw for differential operators, it's a little hard to see what the total symbol of the
  composition is.  This principal symbol map is a compromise
  that turns out to be extremely useful, especially when coupled with
  the asymptotic summation property, in making iterative arguments.
\item[\eqref{property:exact}] A good way to think of this is that
  $\hsigma_m$ is just the obstruction to an operator in $\Psi^m(X)$
  being of order $m-1.$ 
\item[\eqref{property:quant}] The map $\Op$ is far from unique.  Even
  on $\RR^n,$ for
  instance, we can use $\Opl$ as defined by \eqref{quantization} but
  we could also use the ``Weyl'' quantization
$$
(\Op_W(a)\psi)(x) = (2\pi)^{-n} \iint a((x+y)/2,\xi)
e^{i(x-y)\cdot \xi}\psi(y) \, dy\, d\xi
$$
or the ``right'' quantization
$$
(\Op_r(a)\psi)(x) = (2\pi)^{-n}\iint a(y,\xi)
e^{i(x-y)\cdot \xi}\psi(y) \, dy\, d\xi
$$
or any of the obvious interpolating choices.  On a manifold the
choices to be made are even more striking.  One convenient choice that
works globally on a manifold is what might be called ``Riemann-Weyl''
quantization:  Fix a Riemannian metric $g.$  Given $a \in S^m_\cl
(T^*X),$ define the Schwartz kernel of an operator $A$ by
$$
\kappa(A)(x,y) = (2\pi)^{-n} \int \chi(x,y) a(m(x,y),\xi) e^{i
  (\exp_y^{-1}(x),\xi}) \, dg_\xi;
$$
here $\chi$ is a cutoff localizing near the diagonal and in particular,
within the injectivity radius; $m(x,y)$ denotes the midpoint of the
shortest geodesic between $x,y$, $\exp$ denotes the exponential map,
and the round brackets denote the pairing of vectors and covectors.
The ``Weyl'' in the name refers to the evaluation of $a$ at $m(x,y)$
as opposed to $x$ or $y$ (which give rise to corresponding ``left''
and ``right'' quantizations respectively---also acceptable choices).
The ``Riemann'' of course refers to our use of a choice of metric.

We will often only employ a single simple consequence of the existence of
a quantization map: given $a_m \in \CI(S^*X)$ and $m \in \RR$, there exists
$A \in \Psi^m(X)$ with principal symbol $a_m$ and with $\WF' A =\supp a_m.$

\item[\eqref{property:commutator}]
A priori of course $AB-BA \in \Psi^{m+m'}(X);$ however the principal
symbol vanishes, by the commutativity of $\CI (S^*X).$  Hence the need
for a lower-order term, which is subtler, and noncommutative.  That
the Poisson bracket is well-defined independent of coordinates
reflects the fact that $T^*X$ is naturally a symplectic manifold, and
the Poisson bracket is well-defined on such a manifold (see \S\ref{subsec:ham} below).
\begin{exercise}
  Check (by actually performing a change of coordinates) that if $f,g
  \in \CI(T^*X),$ then $\{f,g\}$ is well-defined, independent of
  coordinates.
\end{exercise}

This property is the one which ties classical dynamics to
quantum evolution, as the discussion in \S\ref{subsection:QM} shows.

\

\item[\eqref{property:boundedness}] Remarkably, the mapping property is one
  that can be derived from the other properties of the calculus purely
  algebraically, with the only analytic input being boundedness of
  operators in $\Psi^{-\infty}(X).$ This is the famous H\"ormander
  ``square-root'' argument---see \cite{Hormander:FIO1}, as well as
  Exercise~\ref{exercise:squareroot} below.

  On noncompact manifolds, restricting our attention to properly
  supported operators gives boundedness $L^2 \to L^2_{\loc}.$

  The compactness of negative order operators of course follows from
  boundedness, together with Rellich's lemma, but is worth
  emphasizing; we can regard $\hsigma_0$ as the ``obstruction to
  compactness'' in general.  On noncompact manifolds, this compactness
  property
  fails quite badly, resulting in much interesting mathematics.
\item[\eqref{property:summation}] This follows from our ability to do
  the corresponding ``asymptotic summation'' of total symbols, which
  in turn is precisely ``Borel's Lemma,'' which tells us that any
  sequence of coefficients are the Taylor coefficients of a $\CI$
  function; here we are applying the result to smooth functions on the
  radial compactification of $T^*X,$ and the Taylor series is in the
  variable $\sigma=\abs{\xi}^{-1},$ at $\sigma=0.$
\item[\eqref{property:microsupport}]
Since the total symbol is not well-defined, it is not so obvious that
the microsupport is well-defined; verifying this requires checking
how the total symbol transforms under change of coordinates; likewise,
we may verify that the (highly non-invariant) formula for the total
symbol of the composition respects microsupports to give information
about $\WF' AB.$
\end{enumerate}
\renewcommand{\theenumi}{\arabic{enumi}}

\subsection{Some consequences}
If you believe that there exists a calculus of operators with the
properties enumerated above, well, then you believe quite a lot!  For
instance:
\begin{theorem}
Let $P\in \Psi^m(X)$ with
$\hsigma_m(P)$ nowhere vanishing on $S^*X.$
Then there exists $Q \in \Psi^{-m}(X)$ such that
$$QP-I,PQ-I \in \Psi^{-\infty}(X).$$
\end{theorem}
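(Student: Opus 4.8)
The plan is to construct $Q$ iteratively, killing the error term order by order using the principal symbol calculus and then using asymptotic summation to assemble the pieces. First, since $\hsigma_m(P)$ is nowhere vanishing on $S^*X$, the function $1/\hsigma_m(P)$ is a smooth function on $S^*X$, i.e., the principal symbol of a classical symbol of order $-m$. Using the quantization map (Property \eqref{property:quant}), I would pick $Q_0 = \Op(q_0)$ with $\hsigma_{-m}(Q_0) = 1/\hsigma_m(P)$. By the symbol homomorphism property (Property \eqref{property:symb}), $\hsigma_0(Q_0 P) = \hsigma_{-m}(Q_0)\hsigma_m(P) = 1$, so $\hsigma_0(Q_0 P - I) = 1 - 1 = 0$. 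By the symbol exact sequence (Property \eqref{property:exact}), this means $R_0 := I - Q_0 P \in \Psi^{-1}(X)$.

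Next I would iterate: I claim there exist $Q_j \in \Psi^{-m-j}(X)$, $j \geq 1$, such that setting $\tilde Q_N = \sum_{j=0}^N Q_j$, we have $I - \tilde Q_N P \in \Psi^{-N-1}(X)$. This is an induction on $N$. Suppose $I - \tilde Q_{N-1} P = R_{N-1} \in \Psi^{-N}(X)$, with principal symbol $r_{N-1} = \hsigma_{-N}(R_{N-1})$. Since $r_{N-1} \in \CI(S^*X)$ and $1/\hsigma_m(P)$ is smooth, I set $Q_N = \Op\big( (1/\hsigma_m(P))\, r_{N-1} \big) \in \Psi^{-m-N}(X)$, so that $\hsigma_{-N}(Q_N P) = r_{N-1}$. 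Then
$$
I - \tilde Q_N P = I - \tilde Q_{N-1}P - Q_N P = R_{N-1} - Q_N P \in \Psi^{-N}(X),
$$
and its principal symbol of order $-N$ is $r_{N-1} - r_{N-1} = 0$, so by Property \eqref{property:exact} it lies in $\Psi^{-N-1}(X)$, completing the induction. Now apply asymptotic summation (Property \eqref{property:summation}) to the sequence $\{Q_j\}$ to obtain $Q \in \Psi^{-m}(X)$ with $Q \sim \sum_j Q_j$; then for each $N$, $Q - \tilde Q_N \in \Psi^{-m-N-1}(X)$, so $QP - \tilde Q_N P \in \Psi^{-N-1}(X)$, hence $I - QP \in \Psi^{-N-1}(X)$ for every $N$, i.e., $QP - I \in \Psi^{-\infty}(X)$ by Property \eqref{property:smoothing}.

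This produces a left parametrix $Q$. To get a two-sided one, I would run the identical argument on the right to produce $Q' \in \Psi^{-m}(X)$ with $PQ' - I \in \Psi^{-\infty}(X)$ (here one uses that $\hsigma_m(P)$ is scalar, so left and right division by it agree, and that the adjoint $P^* \in \Psi^m(X)$ has nonvanishing principal symbol $\overline{\hsigma_m(P)}$ if one prefers to dualize instead). Then the standard algebra trick: $Q = Q(PQ') + (QP - I)Q' \cdot(-1)$—more cleanly, $Q - Q' = Q(I - PQ') - (I - QP)Q' \in \Psi^{-\infty}(X)$ since both $I - PQ'$ and $I - QP$ are smoothing and $\Psi^{-\infty}$ is a two-sided ideal (from Properties \eqref{property:alg} and \eqref{property:smoothing}). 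Hence $PQ - I = P(Q - Q') + (PQ' - I) \in \Psi^{-\infty}(X)$ as well, and $Q$ is the desired two-sided parametrix. The only real content is the inductive symbol-removal step, which is routine given the exact sequence; the mild subtlety worth flagging is the matching of left and right parametrices, which is a purely algebraic consequence of $\Psi^{-\infty}(X)$ being an ideal.
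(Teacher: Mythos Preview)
Your proof is correct and follows essentially the same approach as the paper: iteratively remove the principal symbol of the error using the exact sequence, asymptotically sum, and then match a left parametrix with a right one via the standard algebraic trick (which the paper leaves as an exercise with a hint about adjoints). The only cosmetic difference is that you construct the left parametrix first while the paper constructs the right one; you also spell out the left--right matching explicitly, which the paper defers to the reader.
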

In other words, $P$ has an approximate inverse (``parametrix'') which
succeeds in inverting it modulo smoothing operators.  

An operator $P$ with nonvanishing principal symbol is said to be
\emph{elliptic}.  Note that this theorem gives us, via the Sobolev
estimates of \eqref{property:boundedness}, the usual elliptic regularity
estimates.  In particular, we can deduce
$$
P u \in \CI(X) \Longrightarrow u \in \CI(X).
$$
\begin{exercise}
Prove this.
\end{exercise}

\begin{proof}
Let $q_{-m}=(1/\hsigma_m(P));$ let $Q_{-m}\in \Psi^{-m}(X)$ have
principal symbol $q_{-m}.$  (Such an operator exists by the exactness
of the short exact symbol sequence.)  Then by \eqref{property:symb},
$$
\hsigma_0(PQ_{-m})=1,
$$
hence by \eqref{property:exact},\footnote{The identity operator has
  principal symbol equal to $1,$ since the symbol map is a homomorphism.}
$$
PQ_{-m}-I=R_{-1} \in \Psi^{-1}(X).
$$
Now we try to correct for this ``error term:'' pick $Q_{-m-1} \in
\Psi^{-m-1}(X)$ with
$$
\hsigma_{-m-1}(Q_{-m-1})=-\hsigma_{-1}(R_{-1})/\hsigma_m(P).
$$
Then we have
$$
P(Q_{-m}+Q_{-m-1})-I= R_{-2} \in \Psi^{-2}(X).
$$
Continuing iteratively, we get a series of $Q_j\in \Psi^{-m-j}$ such that
$$
P(Q_{-m}+\dots+Q_{-m-N})-I \in \Psi^{-N-1}(X).
$$
Using \eqref{property:summation}, pick
$$
Q \sim \sum_{j=-m}^{-\infty} Q_j.
$$
This gives the desired parametrix:
\begin{exercise}\
\begin{enumerate}\item
Check that $PQ-I \in \Psi^{-\infty}(X).$
\item
Check that $QP-I \in \Psi^{-\infty}(X).$ 
(\textsc{Hint:} First check that a left parametrix exists; you may
find it helpful to take adjoints.  Then check that the left parametrix
must agree with the right parametrix.)
\end{enumerate}
\end{exercise}
\end{proof}

\begin{exercise}
Show that an elliptic pseudodifferential operator on a compact
manifold is Fredholm.  (\textsc{Hint:} You can show, for instance,
that the kernel is finite dimensional by observing that the existence
of a parametrix implies that the identity operator on the kernel is
equal to a smoothing operator, which is compact.)
\end{exercise}

\begin{Exercise}\label{exercise:inverses}\
\begin{enumerate}\item
  Let $X$ be a compact manifold.  Show that if $P \in \Psi^m(X)$ is elliptic, and has an actual
  inverse operator $P^{-1}$ as a map from smooth functions to smooth
  functions, then $P^{-1} \in \Psi^{-m}(X).$ (\textsc{Hint:} Show that
  the parametrix differs from the inverse by an operator in
  $\Psi^{-\infty}(X)$---remember that an operator is in
  $\Psi^{-\infty}(X)$ if and only if it maps distributions to smooth
  functions.)
\item More generally, show that if $P\in \Psi^m(X)$ is elliptic, then
  there exists a generalized inverse of $P,$ inverting $P$ on its
  range, mapping to the orthocomplement of the kernel, and
  annihilating the orthocomplement of the range, that lies in
  $\Psi^{-m}(X).$
\end{enumerate}
\end{Exercise}

\begin{Exercise} Let $X$ be compact, and $P$ an elliptic operator on
  $X,$ as above, with positive order.
Using the spectral theorem for compact, self-adjoint
    operators, show that if $P^*=P,$ then there is an orthornormal basis for
  $L^2(X)$ of eigenfunctions of $P,$ with eigenvalues tending to
  $+\infty.$  Show that the eigenfunctions are in $\CI(X).$
  (\textsc{Hint:} show that there exists a basis of such eigenfunctions for
  the generalized inverse $Q$ and then see what you can say about $P.$)
\end{Exercise}

\begin{exercise}\label{exercise:Lapspectrum} Let $X$ be compact.
\begin{enumerate}
\item Show that the principal symbol of $\Lap,$ the Laplace-Beltrami
  operator on a compact Riemannian manifold, is
  just $$\abs{\xi}_g^2\equiv \sum g^{ij}(x) \xi_i \xi_j,$$
  the metric induced on the cotangent bundle.
\item Using the previous exercise, conclude that there exists an
  orthonormal basis for $L^2(X)$ of eigenfunctions of $\Lap,$ with
  eigenvalues tending toward $+\infty.$
\end{enumerate}
\end{exercise}

\begin{exercise}\label{exercise:squareroot}
Work out the H\"ormander ``square root trick'' on a compact manifold
$X$ as follows.
\begin{enumerate}
\item Show that if $P \in \Psi^0(X)$ is
  self-adjoint, with positive principal symbol, then $P$ has
  an approximate square root, i.e.\ there exists $Q \in \Psi^{0}(X)$
  such that $Q^*=Q$ and $P-Q^2\in \Psi^{-\infty}(X).$ (\textsc{Hint:}
  Use an iterative construction, as in the proof of existence of
  elliptic parametrices.)
\item Show that operators in $\Psi^{-\infty}(X)$ are $L^2$-bounded.
\item Show that an operator $A\in \Psi^{0}(X)$ is $L^2$-bounded.
  (\textsc{Hint:} Take an approximate square root of $\lambda I-A^*A$
  for $\lambda \gg 0.$)
\end{enumerate}
\end{exercise}

As usual, let $\Lap$ denote the Laplacian on a compact manifold.  By
Exercise~\ref{exercise:squareroot}, there exists an operator $A\in
\Psi^1(X)$ such that $A^2=\Lap+R,$ with $R \in \Psi^{-\infty}(X).$ By
abstract methods of spectral theory, we know that $\sqrt{\Lap}$ exists
as an unbounded operator on $L^2(X).$ (This is a very simple use of the
functional calculus: merely take $\sqrt{\Lap}$ to act by
multiplication by $\lambda_j$ on each $\phi_j$, where
$(\phi_j,\lambda_j^2)$ are the eigenfunctions and eigenvalues of the
Laplacian, from Exercise~\ref{exercise:Lapspectrum}.)  In fact, we
can improve this argument to obtain:
\begin{proposition}\label{proposition:sqrtlap}
$$
\sqrt{\Lap} \in \Psi^1(X).
$$
\end{proposition}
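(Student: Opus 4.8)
The plan is to combine the microlocal parametrix $A \in \Psi^1(X)$ with $A^2 = \Lap + R$, $R \in \Psi^{-\infty}(X)$, from Exercise~\ref{exercise:squareroot} with the functional-calculus definition of $\sqrt{\Lap}$, and to show that these two \emph{a priori} different operators differ by a smoothing operator. The key structural point is: $A$ is essentially self-adjoint and ``almost nonnegative,'' and $A^2$ agrees with $\Lap$ up to $\Psi^{-\infty}(X)$, so $A$ should be forced to agree with $\sqrt{\Lap}$ up to $\Psi^{-\infty}(X)$; since $A \in \Psi^1(X)$ and $\Psi^{-\infty}(X) \subset \Psi^1(X)$, this gives $\sqrt{\Lap} \in \Psi^1(X)$.

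First I would arrange that $A$ is (exactly) self-adjoint: replacing $A$ by $(A+A^*)/2$ changes $A$ only by an element of $\Psi^0(X)$ and still gives $A^2 = \Lap + R'$ with $R' \in \Psi^{-\infty}(X)$ --- here one uses Property~\eqref{property:symb} (so $\hsigma_1(A)$ is real, positive, namely $\abs{\xi}_g$) together with Property~\eqref{property:alg}. Next, since $A^2 = \Lap + R'$ and $\Lap \geq 0$ with $R'$ smoothing hence bounded and compact (Property~\eqref{property:boundedness}), the operator $A^2$ is bounded below by $-\norm{R'}$; since $A$ is self-adjoint, $\operatorname{spec}(A)^2 = \operatorname{spec}(A^2) \subset [-\norm{R'},\infty)$, so $\operatorname{spec}(A)$ is contained in $\RR$ with only finitely many eigenvalues outside $[0,\infty)$ below any fixed bound (using that $A$ has compact resolvent, being elliptic of positive order on a compact manifold --- cf.\ the exercises following the parametrix theorem). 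Then I would write $A = |A| + (A - |A|)$, where $|A|$ is defined by the spectral theorem; $|A|^2 = A^2 = \Lap + R'$, and $A - |A|$ is supported (spectrally) on the finitely many negative eigenspaces of $A$, which are spanned by eigenfunctions of $A$ --- these are smooth by elliptic regularity (Property~\eqref{property:boundedness} applied to the elliptic operator $A - \lambda$, or directly to $\Lap$), so $A - |A|$ has finite rank with smooth kernel, i.e.\ $A - |A| \in \Psi^{-\infty}(X)$. Hence $|A| \in \Psi^1(X)$ as well.

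Finally I would compare $|A|$ with $\sqrt{\Lap}$: both are nonnegative self-adjoint operators, and $|A|^2 - (\sqrt{\Lap})^2 = R' \in \Psi^{-\infty}(X)$. Writing $|A| - \sqrt{\Lap} = (|A|^2 - \Lap)(|A| + \sqrt{\Lap})^{-1}$ on the orthocomplement of the (finite-dimensional, smooth) kernel of $\Lap$ --- and noting the kernel contributes only a smoothing correction --- I would argue that $|A| - \sqrt{\Lap}$ maps $L^2$, and more generally any Sobolev space, into $\CI(X)$: the factor $|A|^2 - \Lap = R'$ already maps distributions to smooth functions, and $(|A| + \sqrt{\Lap})^{-1}$ is bounded on each $H^s$ (it is $O(\ang{\lambda}^{-1})$ on eigenspaces), so the composite maps every $H^s$ to $\CI(X)$, hence lies in $\Psi^{-\infty}(X)$ by Property~\eqref{property:smoothing}. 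Therefore $\sqrt{\Lap} = |A| - (|A| - \sqrt{\Lap}) \in \Psi^1(X) + \Psi^{-\infty}(X) = \Psi^1(X)$.

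The main obstacle I anticipate is the bookkeeping around the negative part of the spectrum of $A$ and the kernel of $\Lap$: one must be genuinely careful that "$A^2 = \Lap + R'$ with $R'$ smoothing" does not force $A \geq 0$ on the nose, only up to a finite-rank smooth error, and that this finite-rank error is harmless precisely because its range consists of smooth functions. The second delicate point is justifying that $|A| - \sqrt{\Lap}$ is smoothing via the mapping-property characterization of $\Psi^{-\infty}(X)$ rather than by a symbolic computation --- this is clean, but it relies on Property~\eqref{property:smoothing} and on the Sobolev mapping properties of $(|A|+\sqrt{\Lap})^{-1}$, which in turn come from the spectral theorem plus Exercise~\ref{exercise:Lapspectrum}.
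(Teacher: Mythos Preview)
Your overall strategy---construct a pseudodifferential approximate square root $A$, correct for its finitely many negative eigenvalues to get $|A| \in \Psi^1(X)$ with $|A|^2 - \Lap \in \Psi^{-\infty}(X)$, then show $|A| - \sqrt{\Lap} \in \Psi^{-\infty}(X)$---matches the paper's exactly up to the final comparison step. The gap is in that last step.

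The identity you invoke,
\[
|A| - \sqrt{\Lap} = (|A|^2 - \Lap)(|A| + \sqrt{\Lap})^{-1},
\]
is only valid when $|A|$ and $\sqrt{\Lap}$ commute: in general $(|A|-\sqrt{\Lap})(|A|+\sqrt{\Lap}) = |A|^2 - \Lap + [|A|,\sqrt{\Lap}]$, and there is no reason for the commutator to vanish (indeed $[A^2,\Lap] = [R',\Lap]$ is smoothing but typically nonzero). What you \emph{do} have is the Sylvester-type relation $|A|\,D + D\,\sqrt{\Lap} = R'$ for $D = |A|-\sqrt{\Lap}$, but solving this for $D$ and showing the solution is smoothing requires exactly the kind of ``auxiliary parameter'' the paper warns is unavoidable---e.g.\ writing $D = \int_0^\infty e^{-t|A|} R' e^{-t\sqrt{\Lap}}\,dt$, or a contour integral. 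A related symptom: your claim that $(|A|+\sqrt{\Lap})^{-1}$ is bounded on every $H^s$ because it is ``$O(\ang{\lambda}^{-1})$ on eigenspaces'' tacitly presumes a common eigenbasis for $|A|$ and $\Lap$, which again requires commutativity.

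The paper's fix is to compare \emph{inverses} rather than the operators themselves: write both $A^{-1}$ and $(\Lap')^{-1/2}$ as contour integrals $\frac{1}{2\pi i}\int_\Gamma z^{-1/2}(\,\cdot\, - z)^{-1}\,dz$ and subtract, obtaining an integrand $z^{-1/2}(\Lap'-z)^{-1} R (\Lap'+R-z)^{-1}$ that is manifestly smoothing for each $z$ and integrable in norm. This sidesteps the commutativity issue entirely; one then inverts back to recover $\Lap^{1/2} \in \Psi^1(X)$.
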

Indeed, it follows from a theorem of Seeley that all complex powers of a self-adjoint,
elliptic pseudodifferential operator\footnote{Seeley's theorem is
  better yet: self-adjointness
is unnecessary.} on a compact manifold are
pseudodifferential operators.

All proofs of the proposition seem to introduce an auxiliary parameter in
some way, and the following (taken directly from \cite[Chapter XII,
\S1]{Taylor:pseudors}) seems one of the simplest.  An alternative approach, using the theory of elliptic boundary
problems, is sketched in \cite[pp.32-33, Exercises 4--6]{Taylor:PDE2}.
\begin{proof}
Let $A$ be the self-adjoint parametrix constructed in
Exercise~\ref{exercise:squareroot}, so that
$$
A^2-\Lap=R\in \Psi^{-\infty}(X).
$$  By taking a parametrix for the
square root of $A,$ in turn, we obtain
$$
A=B^2+R'
$$
with $B \in \Psi^{1/2}(X)$ and $R' \in \Psi^{-\infty},$ both
self-adjoint; then pairing with a test function $\phi$ shows that
$$
\ang{A\phi,\phi} \geq \ang{R'\phi,\phi} \geq -C \norm{u}^2
$$
for some $C \in \RR.$  Thus, $A$ can only have finitely many
nonpositive eigenvalues (since it has a compact generalized inverse)
hence its eigenvalues can accumulate only at $+\infty$).  So we may
alter $A$ by the smoothing operator projecting off of these
eigenspaces, and maintain
$$
A^2-\Lap=R\in\Psi^{-\infty}(X)
$$
(with a different $R$, of course) while now ensuring that $A$ is positive.

Now we may write, using the spectral theorem,
$$
(\Lap')^{-1/2} =  \frac{1}{2 \pi i} \int_\Gamma z^{-1/2} ((\Lap')-z)^{-1} \, dz
$$
where $\Gamma$ is a contour encircling the positive real axis
counterclockwise, and given by $\Im z=\Re z$ for $z$ sufficiently
large, and $\Lap'$ is given by $\Lap$ minus the projection onto
constants (hence has no zero eigenvalue).  (The integral converges
in norm, as self-adjointness of $\Lap'$ yields
$$
\norm{((\Lap')-z)^{-1}}_{L^2\to L^2} \lesssim \abs{\Im z}^{-1}.)
$$
Likewise, since $A^2=\Lap'+R$ (with $R$ yet another smoothing operator) we may write
$$
A^{-1} =  \frac{1}{2 \pi i} \int_\Gamma z^{-1/2} ((\Lap')+R-z)^{-1} \, dz
$$
Hence
\begin{align*}
(\Lap')^{-1/2}-A^{-1} &= \frac{1}{2 \pi i} \int_\Gamma z^{-1/2} \big[
((\Lap')-z)^{-1}-((\Lap')+R-z)^{-1}\big] \, dz\\ &=
\frac{1}{2 \pi i} \int_\Gamma z^{-1/2} 
((\Lap')-z)^{-1}R((\Lap')+R-z)^{-1}\, dz.
\end{align*}
Now the integrand, $z^{-1/2} 
((\Lap')-z)^{-1}R((\Lap')+R-z)^{-1},$ is for each $z$ a smoothing
operator, and decays fast enough that when applied to any $u \in \mathcal{D}'(X),$
the integral converges to an element of $\CI(X)$ (in particular, the
integral converges in $\mathcal{C}^0(X),$ even after application of
$\Lap^k$ on the left, for any $k$).   Hence
$$
(\Lap')^{-1/2}-A^{-1}=E \in \Psi^{-\infty}(X);
$$
thus we also obtain
$$
(\Lap')^{1/2}=(A^{-1}+E)^{-1} \in \Psi^1(X);
$$
as $(\Lap')^{1/2}$ differs from $\Lap^{1/2}$ by the smoothing operator
of projection onto constants, this shows that
$$
\Lap^{1/2} \in \Psi^1(X).\qed
$$\noqed
\end{proof}

\section{Wavefront set}

If $P \in \Psi^m(X)$ and $(x_0,\xi_0) \in S^*X,$ we say $P$ is elliptic at
$(x_0,\xi_0)$ if $\hsigma_m(P)(x_0,\xi_0)\neq 0.$  Of course if $P$ is
elliptic at each point in $S^*X,$ it is elliptic in the sense defined
above.  We let
$$
\liptic(P)=\{(x,\xi): P \text{ is elliptic at } (x,\xi)\},
$$
and let
$$
\Sigma_P=S^*X\backslash \liptic (X);
$$
$\Sigma_P$ is known as the \emph{characteristic set} of $P.$

\begin{exercise}\
\begin{enumerate}
\item
Show that $\liptic P \subseteq \WF'P.$
\item If $P$ is a differential operator of
  order $m$ of the form $\sum a_\alpha(x) D^\alpha$ then show that $\WF' P=\pi^*(\bigcup
  \supp a_\alpha),$ while $\liptic P$ may be smaller.
\end{enumerate}
\end{exercise}

The following ``partition of unity'' result, and variants on it, will
frequently be useful in discussing microsupports.  It yields an
operator that is microlocally the identity on a compact set, and
microsupported close to it.
\begin{lemma}\label{lemma:pof1}
Given $K\subset U \subset S^*X$ with $K$ compact, $U$ open, there
exists a self-adjoint operator $B \in \Psi^0(X)$ with
$$
\WF' (\Id-B) \cap K=\emptyset,\ \WF'B \subset U.
$$
\end{lemma}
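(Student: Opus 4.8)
The plan is to obtain $B$ as a microlocal cutoff: quantize a bump function equal to $1$ near $K$ and supported in $U$, repair it so that it matches the identity to infinite order near $K$, and then symmetrize.

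First I would use a smooth Urysohn argument on the manifold $S^*X$ (valid since $K$ is compact and $U$ open) to pick $b\in\CI(S^*X)$ with $0\le b\le 1$, $b\equiv 1$ on an open neighbourhood $N$ of $K$, and $\supp b\subset U$; then extend $b$ to a symbol $\tilde b\in S^0_\cl(T^*X)$ which for $\abs{\xi}\ge 1$ is homogeneous of degree $0$ with $\tilde b(x,\xi)=b(x,\hat\xi)$. The naive candidate $B=\Op(\tilde b)$ already has $\WF'B\subset U$ (by Property \eqref{property:microsupport} and $\esssupp\tilde b\subset\supp b$), but its principal symbol $1-b$ vanishes near $K$ only to leading order, so this shows merely that $\Id-B$ is microlocally of order $-1$ near $K$, not that it is microlocally smoothing there.

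To get the sharp statement I would exploit the surjectivity in Property \eqref{property:quant}: write $\Id=\Op(a_0)+R_0$ with $a_0\in S^0_\cl(T^*X)$ and $R_0\in\Psi^{-\infty}(X)$, and set $B_0:=\Op(\tilde b\,a_0)$, noting that the product $\tilde b\,a_0$ is again a classical symbol of order $0$. By linearity of $\Op$, $\Id-B_0=\Op\big((1-\tilde b)\,a_0\big)+R_0$, and the symbol $(1-\tilde b)a_0$ vanishes identically on $\{\abs{\xi}\ge 1,\ (x,\hat\xi)\in N\}$; hence by the microsupport characterization \eqref{property:microsupport}, $\WF'(\Id-B_0)\cap N=\emptyset$, so in particular $\WF'(\Id-B_0)\cap K=\emptyset$. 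Also $\WF'B_0\subseteq\esssupp(\tilde b\,a_0)\subseteq\supp b\subset U$.

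Finally I would symmetrize: set $B:=\tfrac12(B_0+B_0^*)\in\Psi^0(X)$, which is self-adjoint. Since $\WF'B_0^*=\WF'B_0$ and $\WF'$ is subadditive under sums, $\WF'B\subseteq\WF'B_0\subset U$; and because $\Id-B=\tfrac12\big((\Id-B_0)+(\Id-B_0)^*\big)$, likewise $\WF'(\Id-B)\subseteq\WF'(\Id-B_0)$, which is disjoint from $K$. The only point needing genuine care is the jump from ``order $-1$ near $K$'' to ``smoothing near $K$'': localizing a full symbol $a_0$ of the identity (rather than iteratively correcting lower-order errors and asymptotically summing via Property \eqref{property:summation}, which would also work) is what makes this step painless.
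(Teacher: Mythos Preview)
Your proof is correct and follows precisely the approach the paper intends: the paper leaves this as an exercise with the hint to construct $B$ as $\Op(\psi\,\sigma_{\tot}(\Id))$ for a cutoff $\psi$ equal to $1$ on $K$ and supported in $U$, then symmetrize---which is exactly what you do (your $\tilde b$ is the paper's $\psi$ and your $a_0$ is the paper's $\sigma_{\tot}(\Id)$). Your discussion of why multiplying by the full symbol of the identity, rather than just quantizing the cutoff, is needed to get $\WF'(\Id-B)\cap K=\emptyset$ (as opposed to merely order $-1$ near $K$) is a nice clarification of the key point.
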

\begin{exercise}\label{exercise:firstpof1}
Prove the lemma.  (\textsc{Hint:} You might wish to try constructing $B$ in the form
$$
\Op(\psi \sigma_{\tot}(\Id))
$$
where $\sigma_{\tot}(\Id)$ is the total symbol of the identity (which is simply $1$
for all the usual quantizations on $\RR^n$) and $\psi$ is a cutoff function
equal to $1$ on $K$ and supported in $U.$  Then make $B$ self-adjoint.)
\end{exercise}

\begin{theorem}
If $P\in \Psi^m(X)$ is elliptic at $(x_0,\xi_0),$ there exists a \emph{microlocal
  elliptic parametrix} $Q \in \Psi^{-m}(X)$ such that
$$
(x_0,\xi_0) \notin \WF'(PQ-I) \cup \WF'(QP-I).
$$
\end{theorem}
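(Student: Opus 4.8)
The plan is to mimic the global elliptic parametrix construction from the previous section, but to localize everything near the point $(x_0,\xi_0)$ using the partition-of-unity operator furnished by Lemma~\ref{lemma:pof1}. First I would choose a small open neighborhood $U$ of $(x_0,\xi_0)$ in $S^*X$ on which $\hsigma_m(P)$ is nowhere vanishing (possible since ellipticity at a point is an open condition, as $\hsigma_m(P)$ is continuous). Then apply Lemma~\ref{lemma:pof1} with some compact $K$ with $(x_0,\xi_0)\in \operatorname{int}K\subset K\subset U$ to obtain a self-adjoint $B\in\Psi^0(X)$ with $\WF'(\Id-B)\cap K=\emptyset$ and $\WF'B\subset U$.

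Next I would build, on $U$, a symbol that inverts $\hsigma_m(P)$ microlocally: let $q_{-m}$ be a smooth function on $S^*X$ that agrees with $1/\hsigma_m(P)$ on a neighborhood of $K$ and is supported in $U$ (legitimate since $\hsigma_m(P)$ is nonvanishing on $U$), and let $Q_{-m}\in\Psi^{-m}(X)$ have principal symbol $q_{-m}$ and, using the remark after Property~\eqref{property:quant}, with $\WF'Q_{-m}\subset U$. By Property~\eqref{property:symb}, $\hsigma_0(PQ_{-m})=\hsigma_m(P)q_{-m}$, which equals $1$ near $K$; hence $\hsigma_0(PQ_{-m}B)=\hsigma_0(B)$ near $K$ and $\hsigma_0(B)=1$ on $K$, so $PQ_{-m}B-\Id$ has principal symbol vanishing near $(x_0,\xi_0)$. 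Thus $PQ_{-m}B-\Id=R_{-1}\in\Psi^{-1}(X)$ with $(x_0,\xi_0)\notin\WF'R_{-1}$ (using Property~\eqref{property:microsupport} to control the microsupport of the various products). Now iterate exactly as in the global case: choose $Q_{-m-j}\in\Psi^{-m-j}(X)$ microsupported in $U$ correcting the order-$(-j)$ error, at each stage keeping $(x_0,\xi_0)$ out of the microsupport of the new error term; asymptotically sum via Property~\eqref{property:summation} to get $\tilde Q\sim\sum_j Q_{-m-j}$, and set $Q=\tilde Q B$ (or just $\tilde Q$, since the cutoffs are already baked into the $Q_{-m-j}$). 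This yields $PQ-\Id\in\Psi^{-\infty}(X)$ microlocally near $(x_0,\xi_0)$, i.e.\ $(x_0,\xi_0)\notin\WF'(PQ-\Id)$.

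For the other side, $(x_0,\xi_0)\notin\WF'(QP-\Id)$, I would run the same argument on the left — or, more slickly, take adjoints: $P^*$ is also elliptic at $(x_0,\xi_0)$ since $\hsigma_m(P^*)=\overline{\hsigma_m(P)}$, so the right-parametrix construction applied to $P^*$ produces $Q'$ with $(x_0,\xi_0)\notin\WF'(P^*Q'-\Id)$; taking adjoints and using $\WF'A^*=\WF'A$ gives a left parametrix $(Q')^*$ for $P$ near $(x_0,\xi_0)$. Then a standard associativity argument ($(Q')^*(PQ-\Id)$ versus $((Q')^*P-\Id)Q$, together with the intersection property of $\WF'$ under composition) shows the left and right microlocal parametrices agree modulo an operator with $(x_0,\xi_0)$ outside its microsupport, so a single $Q$ works for both.

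The main obstacle I anticipate is bookkeeping the microsupports: one must check at every step that composing the various cutoff operators and error terms does not ``leak'' microsupport back to $(x_0,\xi_0)$, which rests on repeated use of $\WF'(AB)\subseteq\WF'A\cap\WF'B$ from Property~\eqref{property:microsupport} and on arranging that the error term $R_{-j}$ at each stage is not merely of order $-j$ but also genuinely has $(x_0,\xi_0)\notin\WF'R_{-j}$ — this is what lets the correction $Q_{-m-j}$ be chosen microsupported in $U$ while still killing the error near $(x_0,\xi_0)$. Everything else is a routine transcription of the elliptic parametrix iteration already carried out in the excerpt.
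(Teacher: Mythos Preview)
Your direct-iteration approach has a genuine gap. The assertion that $PQ_{-m}B-\Id=R_{-1}\in\Psi^{-1}(X)$ is false: the principal symbol of this operator vanishes only near $K$, not globally, since away from $U$ (where both $Q_{-m}$ and $B$ are microsupported) it equals $-1$. So your error term stays in $\Psi^0(X)$ and the iteration cannot get started. You also write repeatedly that $(x_0,\xi_0)\notin\WF'R_{-j}$, but by definition that would mean $R_{-j}$ is already of order $-\infty$ near $(x_0,\xi_0)$, which is the final goal, not an intermediate step; you seem to be conflating ``principal symbol vanishes at a point'' with ``lies outside $\WF'$.''

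The easy fix to your approach is to aim for $PQ\approx B$ rather than $PQ\approx\Id$. Take $Q_{-m}$ with principal symbol $\hsigma_0(B)/\hsigma_m(P)$, well-defined and supported in $U$. Then $PQ_{-m}-B$ has globally vanishing principal symbol, hence genuinely lies in $\Psi^{-1}(X)$, with $\WF'\subset U$; now the usual iteration goes through verbatim and yields $Q$ with $PQ-B\in\Psi^{-\infty}(X)$. Since $(x_0,\xi_0)\notin\WF'(\Id-B)$, it follows that $(x_0,\xi_0)\notin\WF'(PQ-\Id)$. Your adjoint-plus-associativity argument for the left side is fine once this is in place.

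The paper's route is different and avoids redoing the iteration altogether: one modifies $P$ to a \emph{globally} elliptic operator $W=BP+\lambda\Op(\ang{\xi}^m)(\Id-B)$ for suitable $\lambda\in\CC$, then takes $Q$ to be the global parametrix for $W$ already constructed. Since $W$ agrees with $P$ microlocally near $(x_0,\xi_0)$ (where $B$ is microlocally the identity), the two-sided parametrix identity $WQ-\Id,\,QW-\Id\in\Psi^{-\infty}$ immediately gives $(x_0,\xi_0)\notin\WF'(PQ-\Id)\cup\WF'(QP-\Id)$. This is slicker in that it reduces to the global theorem rather than reproving it; your (corrected) approach is more hands-on but equally valid.
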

In other words, you should think of $Q$ as inverting $P$ \emph{microlocally
  near $(x_0,\xi_0)$.}
\begin{exercise}
Prove the theorem.  (\textsc{Hint:} If $B$ is a microlocal partition
of unity as in Lemma~\ref{lemma:pof1}, microsupported sufficiently close to
$(x_0,\xi_0)$ and microlocally the identity in a smaller neighborhood,
then show
$$
W=B P + \lambda\Op(\ang{\xi}^m) (\Id-B)
$$
is globally elliptic provided $\lambda \in \CC$ is chosen appropriately.
Now, using the existence of an elliptic parametrix for $W,$ prove the theorem.)
\end{exercise}

Let $u$ be a distribution on a manifold $X.$  We define the \emph{wavefront
  set} of $u$ as follows.
\begin{definition}
The wavefront set of $u,$
$$
\WF u \subseteq S^*X,
$$
is given by $$(x_0,\xi_0) \notin \WF u$$ if and only if there exists $P \in
\Psi^0(X),$ elliptic at $(x_0,\xi_0),$ such that $$Pu \in \CI.$$
\end{definition}

\begin{exercise}
Show that the choice of $\Psi^0(X)$ in this definition is immaterial, and
that we get the same definition of $\WF u$ if we require $P \in \Psi^m(X)$ instead.
\end{exercise}

Note that the wavefront set is, from its definition, a closed set.
Instead of viewing $\WF u$ as a subset of $S^*X,$ we also, on
occasion, think of $\WF u$ as a \emph{conic} subset of $T^*
X\backslash o,$ with $o$ denoting the zero section; a conic set in a
vector bundle is just one that is invariant under the $\RR^+$ action
on the fibers.

An important variant is as follows: we say that 
$$
(x_0, \xi_0) \notin \WF^m u  
$$
if and only if there exists $P \in \Psi^m(X),$ elliptic at
$(x_0,\xi_0)$ such that
$$
P u \in L^2(X).
$$

\begin{proposition}
$\WF u =\emptyset$ if and only if $u \in \CI(X);$ $\WF^m u =\emptyset$
if and only if $u \in H^m_\loc(X).$
\end{proposition}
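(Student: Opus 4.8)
The plan is to dispatch the two ``if'' directions at once and then concentrate on the two ``only if'' directions, which carry all the content. For the ``if'' directions I would simply exhibit a convenient elliptic operator: if $u \in \CI(X)$, then $P = \Id \in \Psi^0(X)$ is elliptic at every point of $S^*X$ (its principal symbol is $1$) and $Pu = u \in \CI$, so $\WF u = \emptyset$; and since $X$ is compact, $H^m_\loc(X) = H^m(X)$, so if $u \in H^m(X)$ I would take $P = \Op(\ang{\xi}^m) \in \Psi^m(X)$, elliptic everywhere, and note that $Pu \in H^0(X) = L^2(X)$ by the Sobolev mapping property \eqref{property:boundedness}, whence $\WF^m u = \emptyset$.

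For the substantive direction, suppose $\WF u = \emptyset$. The first step is to use the definition of the wavefront set: each $p = (x_0,\xi_0)\in S^*X$ admits some $P_p \in \Psi^0(X)$, elliptic at $p$, with $P_p u \in \CI$. Since $\liptic(P_p)$ is open (continuity of the principal symbol) and contains $p$, compactness of $S^*X$ lets me extract finitely many $P_1,\dots,P_N$ with $\bigcup_j \liptic(P_j) = S^*X$ and $P_j u \in \CI$ for each $j$. The crucial step — and the only one I regard as not entirely routine — is to glue these \emph{microlocal} witnesses into a single \emph{globally} elliptic operator by symmetrizing:
\[
W = \sum_{j=1}^N P_j^* P_j \in \Psi^0(X).
\]
By the algebra property \eqref{property:alg} and the symbol homomorphism \eqref{property:symb}, $\hsigma_0(W) = \sum_j \lvert \hsigma_0(P_j)\rvert^2$, which is strictly positive at every point of $S^*X$ because that point lies in some $\liptic(P_j)$; hence $W$ is elliptic. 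Moreover $Wu = \sum_j P_j^*(P_j u) \in \CI(X)$, since each $P_j u \in \CI$ and each $P_j^* \in \Psi^0(X)$ preserves $\CI$ by \eqref{property:boundedness}. I would then invoke the parametrix theorem for elliptic operators to get $S \in \Psi^0(X)$ with $SW - \Id = R \in \Psi^{-\infty}(X)$, so that $u = S(Wu) - Ru$: the first term is smooth because $S$ preserves $\CI$, the second because operators in $\Psi^{-\infty}(X)$ send distributions to $\CI$ functions, by \eqref{property:smoothing}. Hence $u \in \CI(X)$.

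For the $H^m$ statement I would run the identical construction one order up: extract $P_1,\dots,P_N \in \Psi^m(X)$, elliptic with $\bigcup_j \liptic(P_j) = S^*X$ and $P_j u \in L^2(X)$; then $W = \sum_j P_j^* P_j \in \Psi^{2m}(X)$ is elliptic, with parametrix $S \in \Psi^{-2m}(X)$ satisfying $SW = \Id + R$, $R \in \Psi^{-\infty}(X)$. Tracking orders via \eqref{property:boundedness}: each $P_j^* \in \Psi^m(X)$ maps $L^2(X) \to H^{-m}(X)$, so $Wu \in H^{-m}(X)$; then $S \in \Psi^{-2m}(X)$ maps $H^{-m}(X) \to H^m(X)$, so $S(Wu) \in H^m(X)$, while $Ru \in \CI(X) \subset H^m(X)$. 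Therefore $u = S(Wu) - Ru \in H^m(X) = H^m_\loc(X)$.

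I expect the argument to be purely formal, with no hard estimate; the single genuinely substantive move is the symmetrization $W = \sum_j P_j^* P_j$, which trades a finite family of merely microlocally invertible operators for one globally elliptic operator and so makes the already-established global parametrix theorem applicable. The only care required beyond that is keeping the Sobolev orders straight in the $H^m$ version so that the parametrix returns $Wu$ to exactly $H^m$. On a noncompact $X$ I would rerun the same argument with properly supported operators and the corresponding $L^2 \to L^2_\loc$ and Sobolev mapping properties to recover the stated $H^m_\loc$ version, and I would simply remark on this rather than redo the bookkeeping.
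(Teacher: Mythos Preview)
Your proof is correct. The paper does not actually supply a proof of this proposition---it is stated and then the exposition moves on---so there is nothing to compare against; your argument via compactness of $S^*X$, the symmetrization $W=\sum_j P_j^*P_j$ to produce a globally elliptic operator, and the elliptic parametrix is the standard one and is exactly what the paper's axiomatic setup is designed to support.
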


The wavefront set serves the purpose of measuring not just where, but
also in what (co-)direction, a distribution fails to be in $\CI(X)$
(or $H^m$ in the case of the indexed version).  It is instructive to
think about testing for such regularity, at least on $\RR^n,$ by
localizing and Fourier transforming.  Given $(x_0, \hat\xi_0) \in
S^*\RR^n,$ let $\phi\in \CcI(\RR^n)$ be
nonzero at $x_0;$ let $\gamma \in \CI(\RR^n)$ be given by
$$
\gamma(\xi) = \psi\big(\big\lvert\frac{\xi}{\abs{\xi}}-\hat\xi_0\big\rvert\big)\chi(\abs{\xi})
$$ where $\psi$ is a cutoff function supported near $x=0$ and $\chi(t)
\in \CI(\RR)$ is equal to $0$ for $t<1$ and $1$ for $t>2.$ Think of
$\gamma$ as a cutoff in a cone of directions near $\xi_0,$ but
modified to be smooth at the origin.  (We will use such a construction
frequently, and refer in future to a function such as $\gamma$ as a
``conic cutoff near direction $\hat\xi_0.$''.)

Now note that
$\phi(x)\gamma(\xi)$ is a symbol of order zero, and
\begin{equation}\label{explicitoperator}
\Opl(\phi(x) \gamma(\xi))^*=\Opr(\phi(x) \gamma(\xi)) u = (2\pi)^{-n}\F^{-1}\gamma(\xi)\F(\phi u).
\end{equation}
By definition, if $\Opl(\phi(x)\gamma(\xi))^*u\in \CI,$ then $(x_0,\xi_0)
\notin \WF u.$  Note that since $\phi u$ has compact support, we
automatically have $\F(\phi u) \in \CI,$ hence $\F^{-1} \gamma \F
(\phi u)$ is rapidly decreasing.
Since $\F$ is an isomorphism from $\schwartz(\RR^n)$
to itself, we see that it in fact suffices to have
$$
\gamma \F(\phi u) \in \schwartz(\RR^n)
$$
to be able to conclude that $(x_0,\xi_0) \notin \WF u.$ 
Conversely, one can check that the class of operators of the form
$$
\Opl(\phi(x) \gamma(\xi))^* 
$$
is rich enough that this in fact amounts to a \emph{characterization}
of wavefront set:

\begin{proposition}\label{proposition:wfcharacterization}
We have $(x_0, \xi_0) \notin \WF u$ if and only if there
exist $\phi,$ $\gamma$ as above with
$$
\gamma \F(\phi u) \in \schwartz(\RR^n).
$$
\end{proposition}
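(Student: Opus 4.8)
The plan is to handle the two implications separately; the ``if'' direction is essentially the computation recorded just before the statement. Indeed, assume $\phi,\gamma$ are as in the statement, chosen real (as we may), with $\gamma\F(\phi u)\in\schwartz(\RR^n)$. By \eqref{explicitoperator}, $\Opl(\phi\gamma)^*u=(2\pi)^{-n}\F^{-1}\big(\gamma\F(\phi u)\big)$, which lies in $\schwartz(\RR^n)\subset\CI(\RR^n)$ since $\F$ is an automorphism of $\schwartz(\RR^n)$. Moreover $\phi(x)\gamma(\xi)$ is a classical symbol of order $0$ with principal symbol $\phi(x)\psi(\lvert\hat\xi-\hat\xi_0\rvert)$, which is nonzero at $(x_0,\hat\xi_0)$ because $\phi(x_0)\neq 0$ and the cutoff $\psi$ is nonvanishing near $0$; hence $\Opl(\phi\gamma)^*\in\Psi^0(\RR^n)$ is elliptic at $(x_0,\xi_0)$, and $(x_0,\xi_0)\notin\WF u$ follows from the definition of the wavefront set. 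The real content is the converse, where the abstract definition of $\WF u$ must be turned into the concrete Fourier-decay assertion.

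So suppose $(x_0,\xi_0)\notin\WF u$, and take $P\in\Psi^0(\RR^n)$, elliptic at $(x_0,\xi_0)$, with $Pu\in\CI$. The microlocal elliptic parametrix theorem supplies $Q\in\Psi^0$ for which $R:=\Id-QP$ satisfies $(x_0,\xi_0)\notin\WF'R$; thus $u=QPu+Ru$ with $QPu\in\CI$. Now choose $\phi\in\CcI(\RR^n)$, real, with $\phi(x_0)\neq 0$ and support in a small ball about $x_0$, and a real conic cutoff $\gamma$ near direction $\hat\xi_0$ with sufficiently narrow angular support, so that $\WF'\big(\Opl(\phi\gamma)\big)$---which by Property~\eqref{property:microsupport} is contained in $\supp\phi\times\{\hat\xi\text{ near }\hat\xi_0\}$---is disjoint from the closed set $\WF'R$. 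Set $A=\Opl(\phi\gamma)^*$. By \eqref{explicitoperator}, $\gamma\F(\phi u)=(2\pi)^n\F(Au)$, so it suffices to show that $Au=A(QPu)+A(Ru)$ belongs to $\schwartz(\RR^n)$.

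For the first term: $A$ is pseudolocal, so $A(QPu)\in\CI$; and the Schwartz kernel of $A$ is $\phi(y)$ times $\overline{(\F^{-1}\gamma)(y-x)}$, where $\F^{-1}\gamma$ is smooth and rapidly decreasing away from the origin. Hence for $x$ outside a neighborhood of $\supp\phi$ this kernel is rapidly decreasing in $x$, together with all $x$-derivatives, uniformly for $y\in\supp\phi$; pairing the compactly supported distribution $\kappa(A)(x,\cdot)$ against $QPu\in\CI$ shows $A(QPu)$ decays rapidly with all derivatives, hence is Schwartz. For the second term: $\WF'(AR)\subseteq\WF'A\cap\WF'R=\emptyset$, so $AR\in\Psi^{-\infty}(\RR^n)$, and inspecting its kernel---using the structure of $\kappa(A)$ and proper support of $QP$---shows it is smooth, compactly supported in the input variable and Schwartz in the output variable uniformly over the input support; thus $AR$ maps distributions to Schwartz functions and $A(Ru)\in\schwartz(\RR^n)$. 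Combining, $Au\in\schwartz(\RR^n)$, which is what we needed.

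The crux is exactly this last point: certifying that $Au$ is genuinely \emph{Schwartz} and not merely $\CI(\RR^n)$. This is where one must step outside the ``black box'' of the calculus to examine the explicit kernel $\phi(y)(\F^{-1}\gamma)(y-x)$ of $\Opl(\phi\gamma)$ and exploit its rapid decay off $\supp\phi$ (in the base variable) and its compact support in the other; everything upstream of that---existence of $P$, the microlocal parametrix, the microsupport calculus---is just bookkeeping. As usual on the noncompact manifold $\RR^n$ one should also keep in mind the standing conventions about properly supported operators.
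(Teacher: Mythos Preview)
Your argument is correct and follows essentially the same route as the paper's own hint: write $u=QPu+Ru$ with $R=\Id-QP$ having no microsupport at $(x_0,\xi_0)$, choose $B=\Opl(\phi\gamma)^*$ microsupported inside $(\WF'R)^c$, and split $Bu$ into a term with smooth input and a term coming from the smoothing operator $BR$. The one thing you add beyond the paper's sketch---and rightly so---is the explicit check that $Bu$ lands in $\schwartz(\RR^n)$ rather than merely $\CI(\RR^n)$, by exploiting that the kernel of $B$ is $\phi(y)$ times a convolution kernel that is rapidly decreasing off the diagonal; this is exactly the point where one must look inside the black box, and your treatment of it is sound (the appeal to proper support of $QP$ to get compact $y$-support of $\kappa(AR)$ is the right move).
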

\begin{exercise}
Prove the Proposition.  (\textsc{Hint:} If $A\in \Psi^0(\RR^n)$ is elliptic at $(x_0,\xi_0)$ and
$Au \in \CI(\RR^n),$ construct $B=\Opl(\phi(x) \gamma(\xi))^*$ as above so
that $\WF' B$ is contained in the set where $A$ is elliptic.  Hence
there is a microlocal parametrix $Q$ such that $B(QA-I) \in\Psi^{-\infty}(X).$)
\end{exercise}

Note that if $u$ is smooth near $x_0,$ then we have $\phi u \in
\CcI(\RR^n)$ for appropriately chosen $\phi,$ hence there is no
wavefront set in the fiber over $x_0.$

If, by contrast, $u$ is not smooth in any neighborhood of $x_0,$ then
we of course do not have $\F(\phi u) \in \schwartz,$ although it is in
$\CI;$ the wavefront set includes the directions in which it fails to
be rapidly decaying.

Thus, we can easily see that in fact the projection to the base
variables of $\WF u$ is the \emph{singular support} of $u,$ i.e.\ the
points which have no neighborhood in which the distribution $u$ is a
$\CI$ function.

\begin{exercise}
Let $\Omega \subset \RR^n$ be a domain with smooth boundary.  Show
that $\WF 1_\Omega=SN^* (\pa\Omega),$ the spherical normal bundle of
the boundary.  (\textsc{Hint:} You may want to use the fact that the
definition of $\WF u$ is coordinate-invariant.)
\end{exercise}

We have a result constraining the wavefront set of a solution to a PDE
or, more generally, a pseudodifferential equation, directly following
from the definition:
\begin{theorem}\label{theorem:characteristic}
If $Pu \in \CI(X),$ then $\WF u \subseteq \Sigma_P.$
\end{theorem}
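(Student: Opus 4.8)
The plan is to show that if $(x_0,\xi_0)\notin\Sigma_P$, then $(x_0,\xi_0)\notin\WF u$, which is the contrapositive of the desired inclusion. So suppose $P$ is elliptic at $(x_0,\xi_0)$. First I would invoke the microlocal elliptic parametrix theorem proved just above: there exists $Q\in\Psi^{-m}(X)$ such that $(x_0,\xi_0)\notin\WF'(QP-I)$. Write $QP=I+R$ with $(x_0,\xi_0)\notin\WF' R$.

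Next, apply $Q$ to the hypothesis: since $Pu\in\CI(X)$ and $Q$ maps $\CI(X)\to\CI(X)$ (by the mapping property in \eqref{property:boundedness}), we get $QPu\in\CI(X)$, i.e.\ $u+Ru\in\CI(X)$, so $u=-Ru+f$ with $f\in\CI(X)$. Now I would produce a microlocal cutoff: by Lemma~\ref{lemma:pof1}, choose $B\in\Psi^0(X)$ with $\WF'(\Id-B)\cap\{(x_0,\xi_0)\}=\emptyset$ and with $\WF' B$ chosen small enough to avoid $\WF' R$ (possible since $(x_0,\xi_0)\notin\WF' R$ and $\WF' R$ is closed). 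Then $B$ is elliptic at $(x_0,\xi_0)$, and $BRu\in\CI(X)$: indeed $BR\in\Psi^{-\infty}(X)$ because $\WF'(BR)\subseteq\WF' B\cap\WF' R=\emptyset$, and operators with empty microsupport are smoothing, hence map distributions to smooth functions. Therefore $Bu=-BRu+Bf\in\CI(X)$, and since $B$ is elliptic at $(x_0,\xi_0)$ this is exactly the statement that $(x_0,\xi_0)\notin\WF u$.

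I do not anticipate a serious obstacle here; the one point requiring a little care is arranging $\WF' B$ disjoint from $\WF' R$ while keeping $B$ elliptic at $(x_0,\xi_0)$—this works because ellipticity of $B$ at a point only requires the principal symbol to be nonzero there, which is compatible with $\WF' B$ being an arbitrarily small neighborhood of $(x_0,\xi_0)$, and $(x_0,\xi_0)$ itself is not in the closed set $\WF' R$. Alternatively, one could skip $B$ entirely by noting $Q$ is already elliptic at $(x_0,\xi_0)$ and working directly: $Qu=-Ru+f$ shows $Qu$ is smooth microlocally near $(x_0,\xi_0)$—but packaging "smooth microlocally near a point" cleanly still amounts to the same cutoff argument, so introducing $B$ is the most transparent route.
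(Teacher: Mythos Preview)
Your argument is correct, but it is considerably more elaborate than what the paper does. The paper's proof is a single sentence: by the definition of wavefront set (together with the exercise immediately following it, which says the order of the test operator is immaterial), the hypothesis $Pu\in\CI(X)$ with $P$ elliptic at $(x_0,\xi_0)$ is \emph{literally} the statement $(x_0,\xi_0)\notin\WF u$. So $\WF u\cap\liptic P=\emptyset$, i.e.\ $\WF u\subseteq\Sigma_P$.

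Your route through the microlocal parametrix and a cutoff $B$ is essentially a direct verification of that exercise in the course of the proof: you are manufacturing, from the given $P\in\Psi^m(X)$, an order-zero operator $B$ elliptic at $(x_0,\xi_0)$ with $Bu\in\CI(X)$, thereby matching the $\Psi^0$ form of the definition. This is perfectly sound and self-contained, and has the minor virtue of not invoking the exercise as a black box; but once one has internalized that the definition of $\WF u$ is insensitive to the order of the test operator, the theorem really is immediate from the definition, and the parametrix machinery is unnecessary here.
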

\begin{proof}
By definition, $Pu \in \CI(X)$ means that $\WF u \cap \liptic P=\emptyset.$
\end{proof}

\begin{theorem}\label{theorem:microlocality}
If $P\in \Psi^*(X)$, $\WF Pu \subseteq \WF u \cap \WF' P.$
\end{theorem}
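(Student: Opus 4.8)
The plan is to establish the two inclusions $\WF Pu \subseteq \WF' P$ and $\WF Pu \subseteq \WF u$ separately, in each case arguing by contraposition: given a point lying off the relevant closed set, I would produce an operator that is elliptic there and sends $Pu$ into $\CI(X)$, so that the defining property of the wavefront set applies.

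For $\WF Pu \subseteq \WF' P$: assume $(x_0,\xi_0)\notin \WF' P$. Since $\WF' P$ is closed, its complement is an open neighborhood of $(x_0,\xi_0)$, so Lemma~\ref{lemma:pof1} (applied with $K=\{(x_0,\xi_0)\}$) furnishes $B\in\Psi^0(X)$ with $(x_0,\xi_0)\notin\WF'(\Id-B)$ and $\WF' B\cap\WF' P=\emptyset$. Then by the microsupport product rule of Property~\eqref{property:microsupport}, $\WF'(BP)\subseteq\WF' B\cap\WF' P=\emptyset$, so $BP\in\Psi^{-\infty}(X)$ and hence $BPu\in\CI(X)$ by Property~\eqref{property:smoothing}. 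Finally $(x_0,\xi_0)\notin\WF'(\Id-B)$ forces $\hsigma_0(\Id-B)(x_0,\xi_0)=0$, i.e.\ $\hsigma_0(B)(x_0,\xi_0)=1\neq 0$, so $B$ is elliptic at $(x_0,\xi_0)$; by definition $(x_0,\xi_0)\notin\WF Pu$.

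For $\WF Pu \subseteq \WF u$: assume $(x_0,\xi_0)\notin\WF u$, so there is $A\in\Psi^0(X)$ elliptic at $(x_0,\xi_0)$ with $Au\in\CI(X)$. Apply the microlocal elliptic parametrix theorem above to get $Q$ with $R:=QA-I$ satisfying $(x_0,\xi_0)\notin\WF' R$, and then, exactly as in the first case, use Lemma~\ref{lemma:pof1} to choose $B\in\Psi^0(X)$ that is elliptic at $(x_0,\xi_0)$ with $\WF' B\cap\WF' R=\emptyset$. The key algebraic step is to write $BP=BP(QA-R)=(BPQ)A-BPR$ (associativity, Property~\eqref{property:alg}): here $BPR\in\Psi^{-\infty}(X)$ because $\WF'(BPR)\subseteq\WF' B\cap\WF' R=\emptyset$, while $(BPQ)(Au)\in\CI(X)$ since pseudodifferential operators preserve $\CI(X)$ (Property~\eqref{property:boundedness}). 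Hence $BPu=(BPQ)(Au)-(BPR)u\in\CI(X)$, so once more $(x_0,\xi_0)\notin\WF Pu$. Intersecting the two inclusions yields $\WF Pu\subseteq\WF u\cap\WF' P$.

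I do not expect a genuine obstacle: the argument is bookkeeping with the microsupport calculus together with one appeal to the microlocal elliptic parametrix. The only point that needs care is that the cutoff $B$ can always be taken with microsupport small enough to miss the closed set $\WF' P$ (resp.\ $\WF' R$) while still being microlocally the identity --- and hence elliptic --- at $(x_0,\xi_0)$; this is precisely what openness of the complement of a closed set, plus openness of the ellipticity condition, provides.
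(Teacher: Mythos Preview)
Your proposal is correct and follows precisely the approach the paper intends: the paper leaves this as an exercise with the hint ``using microlocal elliptic parametrices for the inclusion in $\WF u$,'' and your argument for $\WF Pu\subseteq\WF u$ does exactly that, while your treatment of $\WF Pu\subseteq\WF' P$ via a microlocal cutoff $B$ with $\WF' B\cap\WF' P=\emptyset$ is the standard companion step.
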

\begin{exercise}
Prove this, using microlocal elliptic parametrices for the inclusion
in $\WF u.$
\end{exercise}
The property of pseudodifferential operators that $\WF Pu \subseteq \WF
u$ is called ``microlocality:'' the operators are not ``local,'' in that they
do move \emph{supports} of distributions around, but they don't move
\emph{singularities}, even in the refined sense of wavefront set.

We shall also need related results on Sobolev based wavefront sets in
what follows:
\begin{proposition}\label{proposition:sobmicro}
If $P \in \Psi^m(X),$ $\WF^{k-m}Pu \subseteq \WF^k u \cap \WF'P$ for all
$k \in \RR.$
\end{proposition}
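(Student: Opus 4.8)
The plan is to mimic the proof of microlocality (Theorem~\ref{theorem:microlocality}) while tracking Sobolev orders, the one extra ingredient being that the Sobolev wavefront set, although defined pointwise, can be tested against every cutoff microsupported in a suitable conic neighborhood.

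First I would record the following reformulation of the hypothesis $(x_0,\xi_0)\notin\WF^k u$: there is a conic open neighborhood $U$ of $(x_0,\xi_0)$ such that $\Lambda u\in L^2(X)$ for every $\Lambda\in\Psi^k(X)$ with $\WF'\Lambda\subseteq U$. Indeed, choose $P'\in\Psi^k(X)$ elliptic at $(x_0,\xi_0)$ with $P'u\in L^2(X)$, let $Q\in\Psi^{-k}(X)$ be a microlocal elliptic parametrix for $P'$, and take $U$ disjoint from $\WF'(QP'-\Id)$. For such a $\Lambda$, write $\Lambda=(\Lambda Q)P'-\Lambda(QP'-\Id)$. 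The operator $\Lambda Q$ lies in $\Psi^0(X)$, so by $L^2$-boundedness (property~\eqref{property:boundedness}) the first term sends $u$ into $L^2(X)$; and $\WF'(\Lambda(QP'-\Id))\subseteq\WF'\Lambda\cap\WF'(QP'-\Id)=\emptyset$, so by property~\eqref{property:microsupport} the second term is smoothing and sends the distribution $u$ into $\CI(X)\subseteq L^2(X)$.

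Next, let $(x_0,\xi_0)\notin\WF^k u\cap\WF'P$ be given; I would produce a single operator $C\in\Psi^{k-m}(X)$, elliptic at $(x_0,\xi_0)$, with $CPu\in L^2(X)$, which is exactly what is needed to conclude $(x_0,\xi_0)\notin\WF^{k-m}Pu$. Pick a conic open neighborhood $U$ of $(x_0,\xi_0)$ small enough that either $U\cap\WF'P=\emptyset$, or (by the previous paragraph) $\Lambda u\in L^2(X)$ for all $\Lambda\in\Psi^k(X)$ with $\WF'\Lambda\subseteq U$. By Lemma~\ref{lemma:pof1}, choose $B\in\Psi^0(X)$ with $\WF'B\subseteq U$ and $(x_0,\xi_0)\notin\WF'(\Id-B)$, so that the principal symbol of $B$ equals that of $\Id$, namely $1$, near $(x_0,\xi_0)$. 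Set $C=\Op(\ang{\xi}^{k-m})B\in\Psi^{k-m}(X)$: since $\Op(\ang{\xi}^{k-m})$ is elliptic and the principal symbol of $B$ is nonvanishing at $(x_0,\xi_0)$, the symbol homomorphism (property~\eqref{property:symb}) shows $C$ is elliptic there. Now $CP=\Op(\ang{\xi}^{k-m})(BP)\in\Psi^k(X)$ with $\WF'(CP)\subseteq\WF'(BP)\subseteq\WF'B\cap\WF'P\subseteq U$. In the first case $\WF'(BP)\subseteq U\cap\WF'P=\emptyset$, hence $CP\in\Psi^{-\infty}(X)$ and $CPu\in\CI(X)\subseteq L^2(X)$; in the second case $CP$ is an operator of $\Psi^k(X)$ with $\WF'(CP)\subseteq U$, whence $CPu=(CP)u\in L^2(X)$. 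Either way $CPu\in L^2(X)$.

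The step I expect to require the most care is the first one: upgrading the pointwise definition of $\WF^k u$ to ``microlocal $H^k$ regularity throughout a conic neighborhood.'' This is the $H^k$ analogue of the (implicitly used) fact that the complement of $\WF u$ is where $u$ is microlocally smooth, and it is the only place the microlocal elliptic parametrix and the microsupport calculus of property~\eqref{property:microsupport} really enter. The rest is symbol bookkeeping: composition with $\Op(\ang{\xi}^{k-m})$ shifts orders by $k-m$ and preserves ellipticity at $(x_0,\xi_0)$, and microsupports only shrink under composition.
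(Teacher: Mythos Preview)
Your proof is correct and follows precisely the approach the paper intends: the proposition is left as an exercise with the hint ``again using a microlocal elliptic parametrix,'' and your first paragraph carries out exactly that parametrix argument to upgrade pointwise $H^k$-regularity to a neighborhood statement, after which the second paragraph is straightforward symbol bookkeeping.
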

\begin{corollary}\label{corollary:microbounded}
Let $P \in \Psi^m(X).$  If
$$
\WF' P \cap \WF^m u =\emptyset
$$
then
$$
P u \in L^2(X).
$$
\end{corollary}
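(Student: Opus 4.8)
The plan is to read this off immediately from Proposition~\ref{proposition:sobmicro} together with the characterization of the empty Sobolev wavefront set. First I would apply Proposition~\ref{proposition:sobmicro} with the choice $k=m$: this gives
$$
\WF^{0}(Pu) \subseteq \WF^{m} u \cap \WF' P.
$$
By hypothesis the right-hand side is empty, so $\WF^{0}(Pu) = \emptyset$.

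Next I would invoke the Proposition stating that $\WF^{s} v = \emptyset$ if and only if $v \in H^{s}_{\loc}(X)$, applied with $s=0$ and $v = Pu$ (note $Pu$ is a well-defined distribution since $P$ maps $\mathcal{D}'(X)$ to itself). This yields $Pu \in H^{0}_{\loc}(X) = L^{2}_{\loc}(X)$. Finally, since we are assuming throughout this section that $X$ is a compact manifold, $L^{2}_{\loc}(X) = L^{2}(X)$, which is the desired conclusion.

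I do not expect a real obstacle here: the corollary is essentially a packaging of Proposition~\ref{proposition:sobmicro}. The only points requiring a moment's care are (i) making the index bookkeeping line up, i.e.\ choosing $k=m$ so that the target Sobolev order $k-m$ is exactly $0$, and (ii) using compactness of $X$ to pass from the local statement to the global $L^{2}$ statement. Both are routine given the results already established.
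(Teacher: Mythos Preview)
Your proof is correct and is precisely the intended argument: the corollary is packaged as an immediate consequence of Proposition~\ref{proposition:sobmicro} (the paper in fact leaves it as an exercise), and your choice $k=m$ together with the $H^0_{\loc}=L^2$ identification on a compact $X$ is exactly what is needed.
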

\begin{exercise}
Prove the proposition (again using a microlocal elliptic parametrix)
and the corollary.
\end{exercise}

We will have occasion to use the following relationship between
ordinary and Sobolev-based wavefront sets:
\begin{proposition}\label{proposition:sobwf}
$$\WF u = \overline{\bigcup_k \WF^k u}.$$
\end{proposition}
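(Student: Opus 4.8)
The plan is to prove the two inclusions separately, using elliptic parametrices for each.

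For $\overline{\bigcup_k \WF^k u}\subseteq \WF u$: since $\WF u$ is closed, it suffices to check $\WF^k u\subseteq \WF u$ for each fixed $k$, and I would argue by contraposition. If $(x_0,\xi_0)\notin\WF u$, pick $P\in\Psi^0(X)$ elliptic at $(x_0,\xi_0)$ with $Pu\in\CI(X)$. Then $\Op(\ang{\xi}^k)P\in\Psi^k(X)$ is again elliptic at $(x_0,\xi_0)$, being the product of two symbols nonvanishing there, and $\Op(\ang{\xi}^k)Pu=\Op(\ang{\xi}^k)(Pu)\in\CI(X)\subseteq L^2(X)$ because pseudodifferential operators map $\CI(X)$ to $\CI(X)$. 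Hence $(x_0,\xi_0)\notin\WF^k u$, which is what we wanted.

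For the reverse inclusion $\WF u\subseteq\overline{\bigcup_k\WF^k u}$, again by contraposition: suppose $(x_0,\xi_0)$ has an open neighbourhood $V\subseteq S^*X$ disjoint from $\WF^k u$ for \emph{every} $k$. Using Lemma~\ref{lemma:pof1} with $K$ a compact neighbourhood of $(x_0,\xi_0)$ inside $V$, I would produce $B\in\Psi^0(X)$ with $\WF' B\subseteq V$ and $\WF'(\Id-B)\cap K=\emptyset$; the latter makes $B$ microlocally the identity near $(x_0,\xi_0)$, hence elliptic there. Now for each $m$ put $\Lambda_m=\Op(\ang{\xi}^m)\in\Psi^m(X)$; since $\WF'(\Lambda_m B)\subseteq\WF' B\subseteq V$ is disjoint from $\WF^m u$, Corollary~\ref{corollary:microbounded} gives $\Lambda_m Bu\in L^2(X)$. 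As $\Lambda_m$ is globally elliptic it has a two-sided parametrix $Q_m\in\Psi^{-m}(X)$ with $Q_m\Lambda_m-I\in\Psi^{-\infty}(X)$, so $Bu=Q_m(\Lambda_m Bu)-(Q_m\Lambda_m-I)Bu$, where the first term lies in $H^m(X)$ (the mapping property of $Q_m$) and the second in $\CI(X)$. Thus $Bu\in H^m(X)$ for all $m$, hence $Bu\in\bigcap_m H^m(X)=\CI(X)$, and since $B$ is elliptic at $(x_0,\xi_0)$ this shows $(x_0,\xi_0)\notin\WF u$.

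The step I expect to need the most care is the uniformity in the second half: the hypothesis only furnishes, for each $m$ separately, \emph{some} operator elliptic at $(x_0,\xi_0)$ carrying $u$ into $L^2$, and the ellipticity sets of these operators could a priori shrink toward the single ray $\{(x_0,\xi_0)\}$ as $m\to\infty$. The remedy is to commit to one fixed cutoff $B$ microsupported inside the single good neighbourhood $V$ and to run it through Corollary~\ref{corollary:microbounded} for all $m$ at once, rather than re-proving microlocal regularity each time. Everything else is routine bookkeeping with the $L^2$/Sobolev mapping properties and the elliptic parametrix already established; on a noncompact $X$ one replaces $H^m(X)$ by $H^m_\loc(X)$ and localizes, so I would present the argument for $X$ compact.
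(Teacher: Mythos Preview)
Your proof is correct. The paper leaves this proposition as an exercise and gives no argument of its own, so there is nothing to compare against; your approach---elliptic parametrices for one inclusion, a fixed microlocal cutoff $B$ combined with Corollary~\ref{corollary:microbounded} for the other---is exactly the intended one.

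One remark on your final paragraph: the ``uniformity'' worry you raise is in fact already handled by the closure in the statement. The contrapositive of $(x_0,\xi_0)\in\overline{\bigcup_k\WF^k u}$ is precisely that a \emph{single} open neighbourhood $V$ avoids $\WF^k u$ for all $k$, so there is no shrinking to guard against; your fixed $B$ works automatically. Without the closure the statement would be false in general, which is the real content of your observation. You might tighten the exposition by saying this directly rather than presenting it as a delicate step.
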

\begin{exercise}
Prove the proposition.
\end{exercise}

\begin{exercise}
Let $\Box$ denote the wave operator,
$$
\Box u = D_t^2 u -\Lap u
$$
on $M=\RR\times X$ with $X$ a Riemannian manifold.  Show if $\Box u=0$ then the
wavefront set of $u$ is a subset of the ``wave cone''
$\{\tau^2=\abs{\xi}^2_g\}$ where $\tau$ is the dual variable to $t$
and $\xi$ to $x$ in $T^* (M).$
\end{exercise}
\begin{exercise}\label{exercise:WFrestriction}\
\begin{enumerate}
\item
Let $k<n,$ and let $\iota:\RR^k \to \RR^n$
denote the inclusion map.

Show that there is a continuous \emph{restriction map} on
compactly supported distributions with no wavefront set conormal to $\RR^k:$
$$
\iota^*: \{u \in \mathcal{E}'(\RR^n): \WF u \cap SN^*(\RR^k)=\emptyset\}
\to \mathcal{E}'(\RR^k).
$$
\textsc{Hint:} Show that it suffices to consider $u$ supported in a
small neighborhood of a single point in $\RR^k.$  Then take the
Fourier transform of $u$ and try to integrate in the conormal
variables to obtain the Fourier transform of the restriction.
\item
Show that, with the notation of the previous part,
$$
\WF \iota^* u \subseteq \iota_*(\WF u)
$$
where $\iota_*: T^*_{\RR^k} \RR^n \to T^*\RR^k$ is the naturally defined
projection map.
\item
Show that both the previous parts make sense, and are
valid, for restriction to an embedded submanifold $Y$ of a manifold $X.$
\item
Show that if $u$ is a distribution on $\RR^k_x$ and $v$ is a distribution on
$\RR^l_y$ then $w=u(x) v(y)$ is a distribution on $\RR^{k+l}$ and 
$$
\WF w \subseteq \big[(\supp u, 0) \times \WF v\big] \cup \big[\WF u \times (\supp v,0)\big] \cup
\WF u \times \WF v.
$$
(\textsc{Hint:} Localize and Fourier transform, as in \eqref{explicitoperator}.)
\end{enumerate}
\end{exercise}

You might wonder: given $P,$ can the wavefront set of a solution to
$Pu=0$ be any closed subset of $\Sigma$?  The answer is no, there are,
in general, further constraints.  To talk about them effectively, we
should digress briefly back into geometry.

\subsection{Hamilton flows}\label{subsec:ham}

We now amplify the discussion \S\ref{subsection:QM} of Hamiltonian
mechanics and symplectic geometry, generalizing it to a broader
geometric context.

Let $N$ be a symplectic manifold, that is to say, one endowed with a
closed, nondegenerate\footnote{Nondegeneracy of $\omega$ means that
  contraction with $\omega$ is an isomorphism from $T_p N$ to $T_p^*
  N$ at each point.} two-form.  (Our prime example is $N=T^*X,$ endowed
with the form $\sum d\xi_j\wedge dx^j;$ by Darboux's theorem, every
symplectic manifold in fact locally looks like this.)

Given a real-valued function $a \in \CI(N),$ we can make a \emph{Hamilton vector field} from
$a$ as follows: by nondegeneracy, there is a unique vector field $\hamvf_a$
such that $\iota_{\hamvf_a}\omega \equiv \omega(\cdot, \hamvf_a) = da.$
\begin{exercise}
Check that in local coordinates in $T^* X,$
$$
\hamvf_a=\sum_{j=1}^n \frac{\pa a}{\pa \xi_j}\pa_{x^j}-\frac{\pa a}{\pa x^j}\pa_{\xi_j}.
$$
\end{exercise}
Thus, for any smooth function $b,$ we may define the Poisson bracket
$$
\{a,b\} = \hamvf_a (b)
$$
\begin{exercise}
Check that the Poisson bracket is antisymmetric.
\end{exercise}
It is easy to verify that the flow along $\hamvf_a$ preserves both the
symplectic form and the function $a:$ we have from Cartan's formula
(and since $\omega$ is closed):
$$
\mathcal{L}_{\hamvf_a} (\omega) = d \iota_{\hamvf_a} \omega = d(da)=0;
$$
also,
$$
\hamvf_a(a) = da(\hamvf_a)=\omega(\hamvf_a,\hamvf_a)=0.
$$

The integral curves of the vector field $\hamvf_a$ are called the
\emph{bicharacteristics} of $a$ and those lying inside
$\Sigma_a=\{a=0\}$ are called \emph{null bicharacteristics}.

\begin{Exercise}\label{Exercise:geodesicflow}\
\begin{enumerate}
\item
Show that the bicharacteristics of $\abs{\xi}_g=(\sigma_2(\Lap))^{1/2}$ project to
$X$ to be geodesics.  The flow along the Hamilton vector field of
$\abs{\xi}_g$ is known as \emph{geodesic flow}.
\item
Show that the null bicharacteristics of $\sigma_2(\Box)$ are lifts to
$T^*(\RR\times X)$ of geodesics of $X,$ traversed both forward and
backward at unit speed.
\end{enumerate}
\end{Exercise}

Recall that the setting of symplectic manifolds is exactly that of
Hamiltonian mechanics: given such a manifold, we can regard it as the
phase space for a particle; specifying a function (the ``energy'' or
``Hamiltonian'') gives a vector field, and the flow along this vector
field is supposed to describe the time-evolution of our particle in
the phase space.

\begin{exercise}
Check that the phase space evolution of the harmonic oscillator
Hamiltonian, $(1/2)(\xi^2+x^2)$ on $T^*\RR,$ agrees with what you
learned in physics class long ago.
\end{exercise}

\subsection{Propagation of singularities}
\begin{theorem}[H\"ormander]\label{theorem:hormander}
Let $Pu \in \CI(X),$ with $P\in \Psi^m(X)$ an operator with real principal symbol.
Then $\WF u$ is a union of maximally extended null bicharacteristics
of $\hsigma_m(P)$ in $S^*X.$
\end{theorem}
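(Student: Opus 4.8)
The strategy is a positive-commutator argument, which is the central technique advertised in the introduction. We already know from Theorem~\ref{theorem:characteristic} that $\WF u \subseteq \Sigma_P$, and since $\WF u$ is closed, it suffices to show the following \emph{propagation} statement: if $(x_0,\xi_0) \in \Sigma_P$ and $(x_0,\xi_0) \notin \WF u$, then $(x_1,\xi_1) \notin \WF u$ for all $(x_1,\xi_1)$ on the null bicharacteristic through $(x_0,\xi_0)$, at least for a short time along the flow; openness and closedness of $\WF u$ along each bicharacteristic then gives the full statement. By working with $\WF^k u$ and inducting on $k$ (using Proposition~\ref{proposition:sobwf}), it is enough to prove: if $u \in H^s$ microlocally near $(x_0,\xi_0)$ — meaning a neighborhood of $(x_0,\xi_0)$ in $S^*X$ is disjoint from $\WF^s u$ — then $u \in H^s$ microlocally near nearby points on the bicharacteristic. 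So fix $s$, normalize so that $P \in \Psi^1(X)$ (replacing $P$ by $\Lambda^{1-m}P$ where $\Lambda = \sqrt{\Lap}$, using Proposition~\ref{proposition:sqrtlap}; this changes $P$ by lower-order terms but not the bicharacteristics of the principal symbol up to reparametrization), and write $p = \sigma_1(P)$.

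The heart of the argument is the construction of a suitable commutant. I want to build $A \in \Psi^{s}(X)$, microsupported in a small conic neighborhood of the bicharacteristic segment, with the property that
$$
\frac{i}{2}[P^*P, A^*A] = -B^*B + E + F,
$$
or more precisely an operator identity in which: the principal symbol of the ``main term'' $B^*B$ dominates $|\sigma_s(A)|^2$ weighted by a positive function, with $B$ elliptic near $(x_1,\xi_1)$; the ``error'' $E$ is microsupported only near $(x_0,\xi_0)$ where we have our hypothesis; and $F$ is lower order (so controllable by induction on $k$, i.e.\ by the assumption $u \in H^{s-1/2}$ microlocally on the whole segment, which is the inductive hypothesis). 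Concretely, the symbol $a = \sigma_s(A)$ should be chosen so that $\hamvf_p a = -b^2 + (\text{term supported near }(x_0,\xi_0))$ along $\Sigma_P$; one takes $a^2$ to be a function that decreases along the flow, with the bulk of $\hamvf_p(a^2)$ negative (giving $-b^2$) except near the ``backward'' end of the segment where one allows a positive bump supported in the region where the hypothesis is assumed. This is the standard Hörmander construction: choose coordinates (via Exercise~\ref{exercise:firstpof1}-type partitions of unity) so that $\hamvf_p$ is roughly $\partial_{x^1}$, and take $a$ to be a product of a cutoff in $x^1$ (monotone, transitioning from $0$ to its value) times conic cutoffs in the transverse directions; the transverse cutoffs contribute sign-indefinite terms, but these can be absorbed into $B^*B$ by taking the transverse localization wide enough relative to the longitudinal one, at the cost of a large constant.

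Given such an identity, the proof concludes by pairing with $u$. Formally,
$$
0 = \langle i[P^*P, A^*A] u, u \rangle = -2\|Bu\|^2 + 2\,\Re\langle (E+F)u, u\rangle + (\text{terms involving } P^*Pu),
$$
where the left side ``vanishes'' because $P^*Pu \in \CI$ (since $Pu \in \CI$, using microlocality, Theorem~\ref{theorem:microlocality}) and $A^*A$ maps into reasonable spaces; the $E$ term is finite by the microlocal hypothesis at $(x_0,\xi_0)$; the $F$ term is finite by the inductive hypothesis on the segment. Hence $\|Bu\| < \infty$, and since $B$ is elliptic at $(x_1,\xi_1)$ this gives $(x_1,\xi_1) \notin \WF^s u$, as desired. \textbf{The main obstacle}, and the step requiring the most care, is making the pairing $\langle i[P^*P,A^*A]u,u\rangle$ rigorous: $u$ is only a distribution, so a priori none of $Bu$, $Eu$, $Fu$ need lie in $L^2$, and one cannot simply write $\|Bu\|^2$ and declare it finite. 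The honest route is a \emph{regularization} argument: insert a family of smoothing operators $\Lambda_\ep = (1+\ep\Lap)^{-N}$ (uniformly bounded in $\Psi^0$, converging to $\Id$), replace $A$ by $A\Lambda_\ep$, carry out the now-legitimate computation with everything in $L^2$, obtain bounds on $\|B\Lambda_\ep u\|$ uniform in $\ep$ (this is where one must check that the commutators of $\Lambda_\ep$ with everything in sight stay uniformly bounded — the regularizer must be chosen in the correct symbol class), and let $\ep \to 0$. Managing these uniform bounds, and correctly tracking which error terms are genuinely lower order versus which must be absorbed into the main term with a large constant, is the technical crux; everything else is symbol bookkeeping using Properties~\eqref{property:alg}--\eqref{property:microsupport} of the calculus.
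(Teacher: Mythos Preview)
Your overall architecture matches the paper's: reduce to showing that absence of $\WF^s u$ on a bicharacteristic segment together with absence of $\WF u$ near one endpoint forces absence of $\WF^{s+1/2} u$ on the segment; build a commutant from cutoffs; pair with $u$; regularize. The paper keeps $P$ of order $m$ and uses a single commutant $A\in\Psi^{2s-m+2}$ rather than reducing to order $1$ and squaring, but that difference is cosmetic.

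There is, however, a genuine error in your choice of commutator. The operator $[P^*P, A^*A]$ has principal symbol (up to a factor of $i$) equal to $\{p^2, a^2\} = 4\,p\,a\,\hamvf_p(a)$, which \emph{vanishes identically on $\Sigma_P$} because of the factor of $p$. Hence one cannot write $\tfrac{i}{2}[P^*P, A^*A] = -B^*B + \cdots$ with $B$ elliptic at any point of $\Sigma_P$: the symbols would be incompatible there. In fact your own symbol computation (``$\hamvf_p a = -b^2 + \cdots$'') is the computation for $[P,A]$, not for $[P^*P, A^*A]$; these are not the same object. The paper uses $P^*A - AP$: since $P$ has real principal symbol, $P^*-P\in\Psi^{m-1}$, so $P^*A - AP = [P,A] + (P^*-P)A$ has principal symbol $\hamvf_p(a)$ plus a lower-order multiple of $a$. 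That extra multiple of $a$ (together with the radial-vector-field contribution from homogeneity) is then absorbed not by widening the transverse cutoffs as you suggest, but by inserting an exponential weight $e^{-\lambda\alpha_1}$ into the commutant and taking $\lambda\gg 0$; this is cleaner than trying to balance cutoff widths. With the commutator corrected to $P^*A - AP$, the pairing $\langle(P^*A-AP)u,u\rangle = \langle Au, Pu\rangle - \langle APu, u\rangle$ is immediately controlled by $Pu\in\CI$, and the remainder of your outline (including the regularization caveat, which the paper also flags) goes through.
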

We should slightly clarify the usage here: to make sense of these null
bicharacteristics, we should actually take the Hamilton vector field
of the homogeneous version of the symbol, $\sigma_m(P);$ this is a
homogeneous vector field, and its integral curves thus have
well-defined projections onto $S^*X.$ If the Hamilton vector field
should be ``radial'' at some point $q \in T^*X,$ i.e.\ coincide with a
multiple of the vector field $\xi \cdot\pa_\xi$ there, then the
projection of the integral curve through $q$ is just a single point in
$S^*X,$ and the theorem gives no further information about wavefront
set at that point.

For $P=\Box,$ the theorem says that the wavefront set lies in the
``light cone,'' and propagates forward and backward at unit speed
along geodesics.  If we take the fundamental solution to the wave
equation\footnote{This is the spectral-theoretic way of writing the
  solution with initial value $0$ and initial time-derivative
  $\delta_p$.}$u= \sin (t\sqrt{\Lap}/\sqrt{\Lap})\delta_p,$ it is not
hard to compute that in fact for small, nonzero time,\footnote{Well, I
  am cheating a bit here, as we haven't stated any results
  allowing us to relate the wavefront set of Cauchy data for the
  wavefront set of the solution to the equation.  To understand how to
  do this, you should read \cite{Melrose:notes}.}
$$
\WF u \subseteq N^* \{d(\cdot, p)=\abs{t}\}\equiv \lag;
$$ This is a generalization of Huygens's Principle, which tells us
that in $\RR\times \RR^{n},$ for $n$ odd, the support of the fundamental solution
is on this expanding sphere (but which is a highly unstable property).
Note that $\lag$ is in fact the bicharacteristic flowout of all
covectors in $\Sigma$ projecting to $N^*(\{p\})$ at $t=0,$ and under this
interpretation, $\lag\subset T^*(\RR\times X)$ makes sense for
\emph{all} times, not just for short time, regardless of the metric
geometry.  We shall return to and amplify this point of view in \S\ref{section:wavetraceredux}.

\begin{exercise}\
\begin{enumerate}\item
Suppose that $\Box u=0$ on $\RR\times \RR^n$ and $u(t,x) \in \CI$ for $(t,x) \in
(-\ep,\ep)\times B(0,1)$ for some $\ep>0.$ Show, using Theorem~\ref{theorem:hormander}, that $u \in \CI$ on $\{\abs{x}<1-\abs{t}\}.$ Can you show this
more directly using the energy methods described in
\S\ref{section:wave}?
\item
Suppose that $\Box u=0$ on $\RR\times \RR^n$ and $u(t,x) \in \CI$ for $(t,x) \in
(-\ep,\ep)\times (B(0,1)\backslash B(0,1/2))$ for some $\ep>0.$ 
Show, using the theorem, that $u \in \CI$ in
$\{\abs{x}<1-\abs{t}\}\cap\{\abs{t} \in (3/4,1)\}$
\end{enumerate}
\end{exercise}

\begin{proof}\footnote{This proof is very close to those employed by
    Melrose in \cite{Melrose:notes} and \cite{Melrose:spectral}.}  
Note that we already know that $\WF u \subseteq \Sigma_P$ by
Theorem~\ref{theorem:characteristic}, hence what remains to be proved is
the flow-invariance.

Let $q \in \Sigma_P \subset S^*X.$  By homogeneity of $\sigma_m(P),$ we can write the Hamilton vector field in
  $T^*X$ in a neighborhood of $q$ as
  \begin{equation}\label{homogeneity}\hamvf_p=\abs{\xi}^{m-1} (\mathsf{V}+h
  \mathsf{R}),\end{equation} where $\mathsf{R}$ denotes the radial vector field $\xi
  \cdot \pa_\xi,$ $h$ is a function on $S^*X,$ and $\mathsf{V}$ is the pullback
  under quotient of a vector field on $S^*X$ itself, i.e.\ $\mathsf{V}$ is
  homogeneous of degree zero with no radial component, hence of the form
  $\sum_j f_j(x,\hat\xi) \pa_{\hat\xi_j}+g_j (x,\hat\xi)\pa_{x^j}.$ Note
  that if $a$ is homogeneous of degree $l$ then
  \begin{equation}\label{radial}\mathsf{R} a=la.\end{equation}
  (Exercise: Verify these consequences of homogeneity.)

By the comments above, we may take $\mathsf{V} \neq 0$ near $q;$ otherwise the
theorem is void.  Thus, without loss of generality, we may employ a
coordinate system $\alpha_1,\dots,\alpha_{2n-1}$ for $S^*X$ in which
\begin{equation}\label{nicecoords}
\mathsf{V}=\pa_{\alpha_1},
\end{equation}
hence using $\alpha,\abs{\xi}$ as coordinates in $T^*X,$
$$
\hamvf_p=\pa_{\alpha_1} + h \mathsf{R};
$$
we may shift coordinates so that $\alpha(q)=0.$  We split the $\alpha$ variables
into $\alpha_1$ and $\alpha'=(\alpha_2,\dots,\alpha_{n-1}).$  

 Since $\WF u$ is closed,
  it suffices to prove the following: if $q \notin \WF u$ then
  $\Phi_t(q)\notin \WF u$ for $t \in [-1,1],$ where $\Phi_t$
  denotes the flow generated by $\mathsf{V}.$\footnote{Of course,
we are assuming here that the interval $[-1,1]$ remains in our coordinate
neighborhood; rescale the coordinates if necessary to make this so.}
  (This will show that the intersection of $\WF u$ with the
  bicharacteristic through $q$ is both open and closed, hence is the
  whole thing.)

We can make separate arguments for $t \in [0,1]$ and $t \in
[-1,0],$ and will do so (in fact, we will leave one case to the reader).

For simplicity, let us take $Pu=0;$ we leave the case of an
inhomogeneous equation for the reader (it introduces extra terms, but
no serious changes will in fact be necessary in the proof).

Since $\WF u$ is closed, our assumption that $q \notin \WF u$ tells us
that there is in fact a $2\delta$-neighborhood of $0$ in the $\alpha$
coordinates that is disjoint from $\WF u;$ we are trying to extend
this regularity along the rest of the set $(\alpha_1,\alpha') \in
[0,1]\times 0.$  We proceed as follows: let
\begin{equation}\label{sup}
s_0=\sup\{s: \WF^s u \cap \{(\alpha_1,\alpha') \in [0,1]\times
B(0,\delta)\}=\emptyset\}.\end{equation} Pick any $s<s_0.$ We will show that in fact
\begin{equation}\label{tobeshown}
\WF^{s+1/2} u \cap \{(\alpha_1,\alpha') \in
[0,1]\times B(0,\delta)\}=\emptyset,
\end{equation} thus establishing that $s_0=\infty,$ which is
the desired result (by Proposition~\ref{proposition:sobwf}).
One can regard this strategy as iteratively obtaining more
and more regularity for $u$ along the bicharacteristic (i.e.\ the idea
is that we start by knowing \emph{some} possibly very bad regularity,
and we step by step conclude that we can improve upon this regularity,
half a derivative at a time).  More colloquially, the idea is that the
``energy,'' as measured by testing the distribution $u$ by
pseudodifferential operators, should be comparable at different points
along the bicharacteristic curve.

\

Now we prove the estimates that yield \eqref{tobeshown} via commutator methods.
Let $\phi(s)$ be a cutoff function with
\begin{equation}\label{cutoff}
\begin{aligned}
\phi(t) &> 0 \text{ on } (-1,1),\\
\supp \phi &=[-1,1]
\end{aligned}.
\end{equation}
Let $\phi_{\delta}(s)=\phi(\delta^{-1}s);$ arrange that $\sqrt{\phi} \in
\CI.$ Let $\chi$ be a cutoff function equal to $1$ on $(0, 1)$ and
with $\chi'=\psi_1-\psi_2,$ with $\psi_1$ supported on $(-\delta,
\delta)$ and $\psi_2$ on $(1-\delta,1+\delta);$ we will further assume
that $\sqrt{\chi}, \sqrt{\psi_i} \in \CI.$

\begin{exercise}
Verify that cutoffs with these properties exist.
\end{exercise}

In our coordinate system for $S^*X,$ let $$\hat
a=\phi_\delta(\abs{\alpha'}) \chi(\alpha_1)e^{-\lambda \alpha_1} \in
\CI(S^*X),$$ with $\lambda \gg 0$ to be chosen presently.  Passing to
the corresponding function on $a \in \CI(T^*X)$ that is homogeneous of
degree $2s-m+2,$ we have
\begin{multline}
\hamvf_p (a) =
\abs{\xi}^{2s+1}\big(-\lambda \phi_\delta(\abs{\alpha'})\chi(\alpha_1)e^{-\lambda \alpha_1} \\+
  \phi_\delta(\abs{\alpha'})(\psi_1-\psi_2)e^{-\lambda \alpha_1} +
  h(\alpha)(2s-m+2) a \big)
\end{multline} with $h$ given by
\eqref{homogeneity}.  Since a $2\delta$ coordinate neighborhood of the
origin was assumed absent from $\WF u,$ we have in particular ensured that
$\supp \phi_\delta(\abs{\alpha'})\psi_1(\alpha_1)$ is contained in
$(\WF u)^c.$  We also have $\supp \hat a \subset (\WF^s u)^c$ by
\eqref{sup}, since $s<s_0.$
\begin{figure}[h]
\setlength{\unitlength}{0.00033333in}
\begingroup\makeatletter\ifx\SetFigFont\undefined%
\gdef\SetFigFont#1#2#3#4#5{%
  \reset@font\fontsize{#1}{#2pt}%
  \fontfamily{#3}\fontseries{#4}\fontshape{#5}%
  \selectfont}%
\fi\endgroup%
{\renewcommand{\dashlinestretch}{30}
\begin{picture}(11872,6796)(0,-10)
\thicklines
\texture{55888888 88555555 5522a222 a2555555 55888888 88555555 552a2a2a 2a555555 
	55888888 88555555 55a222a2 22555555 55888888 88555555 552a2a2a 2a555555 
	55888888 88555555 5522a222 a2555555 55888888 88555555 552a2a2a 2a555555 
	55888888 88555555 55a222a2 22555555 55888888 88555555 552a2a2a 2a555555 }
\path(1050,274)(11850,274)
\path(1050,274)(11850,274)
\path(11610.000,214.000)(11850.000,274.000)(11610.000,334.000)
\path(1050,274)(1050,2974)
\path(1110.000,2734.000)(1050.000,2974.000)(990.000,2734.000)
\thinlines
\texture{55888888 88555555 5522a222 a2555555 55888888 88555555 552a2a2a 2a555555 
	55888888 88555555 55a222a2 22555555 55888888 88555555 552a2a2a 2a555555 
	55888888 88555555 5522a222 a2555555 55888888 88555555 552a2a2a 2a555555 
	55888888 88555555 55a222a2 22555555 55888888 88555555 552a2a2a 2a555555 }
\shade\path(1050,5674)(11850,5674)(11850,4399)
	(1050,4399)(1050,5674)
\path(1050,5674)(11850,5674)(11850,4399)
	(1050,4399)(1050,5674)
\texture{33333333 fff 33333333 fff 33333333 fff 33333333 fff 33333333 fff
33333333 fff 33333333 000 33333333 000}
\shade\path(10650,5674)(11850,5674)(11850,4399)
	(10650,4399)(10650,5674)
\path(10650,5674)(11850,5674)(11850,4399)
	(10650,4399)(10650,5674)
\shade\path(1050,5674)(2250,5674)(2250,4399)
	(1050,4399)(1050,5674)
\path(1050,5674)(2250,5674)(2250,4399)
	(1050,4399)(1050,5674)
\path(1050,274)(1050,276)(1051,280)
	(1053,287)(1057,298)(1061,315)
	(1067,336)(1074,363)(1083,396)
	(1094,435)(1106,479)(1119,527)
	(1134,580)(1149,636)(1165,695)
	(1182,756)(1200,818)(1217,880)
	(1235,943)(1253,1004)(1270,1065)
	(1287,1124)(1304,1181)(1320,1236)
	(1336,1289)(1351,1340)(1366,1389)
	(1381,1435)(1395,1480)(1409,1523)
	(1422,1564)(1436,1603)(1449,1640)
	(1461,1677)(1474,1712)(1487,1746)
	(1499,1779)(1512,1811)(1525,1843)
	(1538,1874)(1552,1908)(1567,1942)
	(1582,1975)(1597,2008)(1613,2041)
	(1629,2074)(1646,2106)(1664,2138)
	(1683,2170)(1702,2201)(1722,2232)
	(1743,2263)(1764,2293)(1787,2322)
	(1810,2351)(1835,2379)(1860,2406)
	(1886,2433)(1913,2458)(1941,2483)
	(1969,2506)(1999,2528)(2029,2550)
	(2060,2569)(2091,2588)(2124,2605)
	(2157,2622)(2190,2636)(2225,2650)
	(2260,2662)(2296,2674)(2332,2683)
	(2369,2692)(2408,2700)(2447,2706)
	(2488,2711)(2516,2715)(2546,2717)
	(2576,2719)(2607,2721)(2639,2722)
	(2671,2722)(2705,2722)(2740,2722)
	(2776,2721)(2812,2719)(2850,2717)
	(2889,2715)(2929,2712)(2971,2709)
	(3013,2705)(3057,2701)(3102,2696)
	(3148,2690)(3195,2685)(3243,2678)
	(3293,2672)(3343,2665)(3395,2657)
	(3448,2649)(3501,2641)(3556,2632)
	(3611,2623)(3667,2614)(3724,2604)
	(3782,2595)(3841,2584)(3900,2574)
	(3960,2563)(4020,2552)(4081,2541)
	(4142,2530)(4204,2518)(4266,2507)
	(4328,2495)(4391,2483)(4454,2471)
	(4518,2459)(4581,2447)(4646,2435)
	(4710,2423)(4775,2411)(4840,2399)
	(4906,2386)(4972,2374)(5039,2361)
	(5107,2349)(5175,2336)(5231,2326)
	(5288,2316)(5345,2306)(5404,2295)
	(5463,2285)(5522,2274)(5583,2263)
	(5644,2253)(5706,2242)(5769,2231)
	(5833,2219)(5898,2208)(5963,2197)
	(6030,2185)(6097,2174)(6165,2162)
	(6233,2150)(6302,2138)(6373,2126)
	(6443,2114)(6515,2102)(6586,2090)
	(6659,2078)(6731,2065)(6805,2053)
	(6878,2040)(6952,2028)(7026,2015)
	(7100,2003)(7174,1990)(7248,1978)
	(7322,1965)(7396,1952)(7470,1940)
	(7543,1927)(7616,1915)(7689,1903)
	(7761,1890)(7832,1878)(7903,1866)
	(7973,1854)(8043,1842)(8112,1830)
	(8179,1818)(8246,1807)(8312,1795)
	(8378,1784)(8442,1773)(8505,1761)
	(8567,1751)(8628,1740)(8688,1729)
	(8747,1719)(8804,1708)(8861,1698)
	(8917,1688)(8971,1678)(9025,1668)
	(9077,1658)(9129,1649)(9179,1639)
	(9229,1630)(9277,1621)(9325,1611)
	(9393,1598)(9459,1585)(9524,1572)
	(9587,1559)(9650,1547)(9711,1534)
	(9770,1521)(9829,1508)(9887,1495)
	(9944,1482)(9999,1469)(10054,1456)
	(10108,1443)(10160,1430)(10212,1417)
	(10262,1404)(10312,1391)(10360,1377)
	(10407,1364)(10453,1351)(10498,1337)
	(10541,1324)(10583,1311)(10624,1297)
	(10663,1284)(10701,1271)(10738,1257)
	(10773,1244)(10808,1231)(10840,1218)
	(10872,1205)(10902,1192)(10931,1179)
	(10959,1166)(10986,1153)(11012,1140)
	(11037,1127)(11060,1114)(11083,1101)
	(11105,1089)(11127,1076)(11148,1063)
	(11168,1050)(11188,1036)(11216,1017)
	(11243,997)(11270,976)(11296,954)
	(11322,932)(11348,908)(11374,883)
	(11400,856)(11427,828)(11454,799)
	(11481,767)(11509,733)(11538,698)
	(11568,661)(11598,623)(11628,583)
	(11657,544)(11687,504)(11715,466)
	(11741,429)(11766,395)(11787,365)
	(11806,338)(11821,317)(11832,300)
	(11840,288)(11846,280)(11849,276)(11850,274)
\put(900,3954){$\supp \psi_1 \phi_\delta$}
\put(11025,3954){$\supp \psi_2 \phi_\delta$}
\put(6975,6654){$\mathsf{H}_p$}
\put(5250,3954){$\supp \hat a$}
\put(10725,0){$\alpha_1$}
\put(0,3204){$\chi(\alpha_1) e^{-\lambda \alpha_1}$}
\thicklines
\path(4125,6199)(7350,6199)
\path(4125,6199)(7350,6199)
\blacken\path(6990.000,6109.000)(7350.000,6199.000)(6990.000,6289.000)(6990.000,6109.000)
\end{picture}
}
\caption{The support of the commutant and its value along the line
  $\alpha'=0.$  The support of the term
  $\psi_1(\alpha_1)\phi_\delta(\abs{\alpha'})$ is arranged to be contained in the
  complement of $\WF u,$ while the support of the whole of $a$ is arranged
  to be in the complement of $\WF^s u.$}
\end{figure}

Let $A \in \Psi^{2s-m+2}(X)$ be given by the quantization of
$a$.\footnote{I.e., really $A$ is given by cutting off $a$ near
  $\xi=0$ to give a smooth total symbol and quantizing that.}
Since
$\sigma_m(P)$ is real by assumption, we have $P^*-P \in \Psi^{m-1}(X).$
(Exercise: Check this!)  Thus the ``commutator'' $P^*A-AP,$ which is a priori of order
$2s+2,$ has vanishing principal symbol of order $2s+2,$ hence it in fact
lies in $\Psi^{2s+1}(X),$ and we may write
$$
(P^*A-AP) =[P,A]+(P^*-P)A ,
$$
with
\begin{multline} i\sigma_{2s+1}([P,A]+(P^*-P)A) = 
\hamvf_p(a)+\sigma_{m-1}(P^*-P)a\\ =-\lambda
\phi_\delta(\abs{\alpha'})\chi(\alpha_1)e^{-\lambda \alpha_1}\abs{\xi}^{2s+1} +
\phi_\delta(\abs{\alpha'})(\psi_1-\psi_2)e^{-\lambda \alpha_1}\abs{\xi}^{2s+1}\\
+(i\sigma_{m-1}(P^*-P)+h(\alpha) (2s-m+2))a,\label{foobar}
\end{multline}
by \eqref{homogeneity},\eqref{radial}, and \eqref{nicecoords}.  If $\lambda \gg 0$ is
chosen sufficiently large, we may absorb the third term into the first, and
write the RHS of \eqref{foobar} as
$$
-f(\alpha)\phi_\delta(\abs{\alpha'})\chi(\alpha_1) +
\phi_\delta(\abs{\alpha'})(\psi_1-\psi_2)e^{-\lambda \alpha_1}
$$ with $f >0$ on the support of $\phi_\delta \chi.$

Let $B\in \Psi^{(2s+1)/2}(X)$ be obtained by quantization of
$$
\abs{\xi}^{s+1/2}({f}(\alpha){\phi_\delta}(\abs{\alpha'}){\chi}(\alpha_1))^{1/2};
$$
and let $C_i\in \Psi^{(2s+1)/2}(X)$ be obtained by quantization of
$$
\abs{\xi}^{s+1/2}({\phi_\delta}(\abs{\alpha'})\psi_i(\alpha_1))^{1/2} e^{-\lambda \alpha_1/2}.
$$
Then by the symbol calculus, i.e.\ by Properties~\ref{property:symb},
\ref{property:exact} of the calculus of pseudodifferential operators,
\begin{equation}\label{thecommutator}
i(P^*A-AP) =i(P^*-P)A+ i[P,A] = -B^*B +C_1^* C_1-C_2^*C_2+R
\end{equation}
with $R \in \Psi^{2s}(X),$ hence of lower order than the other terms;
moreover we have $\WF' R \subset \supp \hat a.$

Now we ``pair'' both sides of \eqref{thecommutator} with our solution
$u.$  We have
$$
i\ang{(P^*A-AP)u,u} = \ang{(- B^*B +C_1^* C_1-C_2^*C_2+R)u,u};
$$
as we are taking $Pu=0,$ the LHS vanishes.\footnote{In the
  case of an inhomogeneous equation, it is of course here that extra
  terms arise.}  We thus have,
rearranging this equation,
\begin{equation}\label{poscomm}
\norm{B u}^2 + \norm{C_2 u}^2= \norm{C_1 u}^2  + \ang{Ru,u}.
\end{equation}
I claim that the RHS is finite:
Recall that $R$ lies in $\Psi^{2s}(X).$
Let $\Lambda$ be an
operator of order $s,$ elliptic on $\WF'R$ and with $\WF'\Lambda$
contained in the complement of $\WF^s u.$
\begin{exercise}
Show that such a $\Lambda$ exists.
\end{exercise}
Thus, letting $\Upsilon$ be
a microlocal parametrix for $\Lambda$ on $\WF' R,$ we have
$$
\WF' R \cap \WF' (\Id-\Lambda\Upsilon)=\emptyset,
$$
hence
$$
R-\Lambda\Upsilon R =E\in \Psi^{-\infty}(X).
$$
Thus,
$$
\abs{\ang{Ru,u}} \leq \abs{\ang{\Upsilon Ru, \Lambda^* u}} +\abs{\ang{Eu,u}}<\infty
$$
by Corollary~\ref{corollary:microbounded} since $\WF' \Upsilon R\cup
\WF' \Lambda^* 
\subset (\WF^s u)^c$ (and since $E$ is smoothing).  Returning to
\eqref{poscomm}, we also note that the term $\norm{C_1 u}^2$ is finite
by our assumptions on the location of $\WF^{s+1/2} u$ (and another use
of Corollary~\eqref{corollary:microbounded}).  Thus,
$$
\norm{B u}<\infty,
$$
and consequently, $$\WF^{s+1/2} u \cap \liptic B=\emptyset,$$
which was the desired estimate.\end{proof}

\begin{exercise}
Now see how the argument should be modified to yield absence of
$\WF^{s+1/2} u$ on 
$$
\{\alpha' \in [-1,0], \alpha'=0\}.
$$
One cheap alternative to going through the whole proof might be to notice
that we also have $(-P) u \in \CI,$ and that $\hamvf_{-p}=-\hamvf_p;$ thus, the
``forward propagation'' that we have just proved should yield backward
propagation along $\hamvf_p$ as well.
\end{exercise}

\noindent\textsc{The fine print:} Now, having done all that, note that
it was a cheat.  In particular, we didn't know a priori that we could
apply any of the operators that we used to $u$ and obtain an $L^2$
function, let alone justify the formal integrations by parts used to
move adjoints across the pairings.  Therefore, to make the above
argument rigorous, we need to modify it with an \emph{approximation
argument}.  This is similar to the situation in
Exercise~\ref{exercise:schrodinger}, except in that case, we had a
natural way of obtaining smooth solutions to the equation which
approximated the desired one: we could replace our initial data
$\psi_0$ for the Schr\"odinger equation by, for instance,
$e^{-\ep\Lap} \psi_0;$ the solution at later time is then just
$e^{-\ep\Lap} \psi,$ and we can consider the limit $\ep\downarrow 0.$
In the general case to which this theorem
applies, though, we do not have any convenient families of smoothing
operators commuting with $P.$ So we instead take the tack of smoothing
our \emph{operators} rather than the solution $u.$ We should
manufacture a family of smoothing operators $G_\ep$ that strongly
approach the identity as $\ep\downarrow 0,$ and replace $A$ by
$AG_\ep$ everywhere it appears above.  If we do this sensibly, then
the analogs of the estimates proved above yield the desired estimates
in the $\ep\downarrow 0$ limit.  Of course, we need to know how
$G_\ep$ passes through commutators, etc., so the right thing to do is
to take the $G_\ep$ themselves to be pseudodifferential approximations
of the identity, something like
$$
G_\ep =\Opl(\varphi(\ep\abs{\xi}))
$$ on $\RR^n,$ with $\varphi\in \CcI(\RR)$ a cutoff equal to $1$ near $0.$  We content ourselves with referring the interested
reader to \cite{Melrose:spectral} for the analogous development in the
``scattering calculus'' including details of the approximation
argument.

\begin{exercise}\label{exercise:hormandervariants}\
\begin{enumerate}
\item
Show the following variant of Theorem~\ref{theorem:hormander}: if $P\in \Psi^m(X)$ is
an operator with real principal symbol, and $P u \in \CI(X),$ show that
$\WF^k u$ is a union of maximally extended bicharacteristics of $P$ for
each $k \in \RR.$  (Hint: the proof is a subset of the proof of
Theorem~\ref{theorem:hormander}.)
\item
Show the following inhomogeneous variant of Theorem~\ref{theorem:hormander}: if $P\in
\Psi^m(X)$ is an operator with real principal symbol, and $P u =f,$ show
that $\WF u\backslash \WF f$ is a union of maximally extended
bicharacteristics of $P.$
\end{enumerate}
\end{exercise}

\begin{exercise}\label{exercise:schrodingerinvariance}\
\begin{enumerate}
\item What does Theorem~\ref{theorem:hormander} tell us about solutions to the
  Schr\"odinger equation?  (Hint: not much.)
\item Nonetheless: let $\psi(t,x)$ be a solution to the Schr\"odinger equation on
  $\RR\times X$ with $(X,g)$ a Riemannian manifold; suppose that
  $\psi(0,x)=\psi_0 \in H^{1/2}(X).$  Define a set $S_{1}\subset S^*X$ by 
\begin{multline*}
q\notin S_{1} \Longleftrightarrow \text{there exists } A \in
\Psi^{1}(X),\ q \in \liptic(A),\\ \text{such that } \int_0^1 \norm{A\psi}^2
\,dt <\infty.
\end{multline*}
(In other words, $S_{1}$ is a kind of wavefront set measuring where in
the phase space $S^*X$ we have $\psi \in L^2([0,1]; H^{1}(X))$---cf.\
Exercise~\ref{exercise:schrodinger}.)

\emph{Show that $S_1$ is invariant under the geodesic flow on $S^*X.$}
(See Exercise~\ref{Exercise:geodesicflow} for the definition of
geodesic flow.)

(Hint: use \eqref{Ehrenfest} with $A$ an appropriately chosen
pseudodifferential operator of order zero, constructed much like the ones
used in proving Theorem~\ref{theorem:hormander}.)

Reflect on the following interpretation: ``propagation of $L^2H^{1}$
regularity for the Schr\"odinger equation occurs at infinite speed along
geodesics.''
\end{enumerate}
\end{exercise}

\section{Traces}

It turns out to be of considerable interest in spectral geometry to
consider the \emph{traces} of operators manufactured from $\Lap,$ the
Laplace-Beltrami operator on a compact\footnote{We especially emphasize that
$X$ denotes a \emph{compact} manifold throughout this section.} Riemannian manifold.  The
famous question posed by Kac \cite{Kac}, ``Can one hear the shape of a
drum,'' has a natural extension to this context:  Recall from
Exercise~\ref{exercise:Lapspectrum} that there exists an orthonormal
basis $\phi_j$ of eigenfunctions of $\Lap$ with eigenvalues
$\lambda_j^2 \to +\infty;$ what, one wonders, can one recover of the
geometry of a Riemannian manifold from the sequence of frequencies
$\lambda_j$?  Using PDE methods to understand traces of
functions of the Laplacian has led to a better
understanding of these inverse spectral problems.

Recall from Proposition~\ref{proposition:sqrtlap} that $\sqrt\Lap$ is
a first-order pseudodifferential operator on $X.$
It is a slightly inconvenient fact that while $\sqrt{\Lap} \in
\Psi^1(X),$ $\sqrt{\Lap} \notin
\Psi^1(\RR\times X):$ its Schwartz kernel is easily seen to be singular
away from the diagonal.  But this turns out be be of little practical
importance for our considerations here: it is close enough!

Let us now consider the operator
\begin{equation}\label{waveop}
U(t) =e^{-it\sqrt\Lap}
\end{equation}
which can be defined by the functional calculus to act as
the scalar operator $e^{-it\lambda_j}$ on each $\phi_j.$   $U(t)$ is
unitary, and indeed is the solution operator to the Cauchy problem for the equation
\begin{equation}\label{halfwave}
(\pa_t+i\sqrt{\Lap})u=0;
\end{equation}
that is to say, if $u=U(t) f,$ we have
$$
(\pa_t+i\sqrt{\Lap})u=0,\quad \text{and } u(0,x) = f(x).
$$
Equation \eqref{halfwave} is easily seen to be very closely related to
the wave equation: if $u$ solves \eqref{halfwave} then applying
$\pa_t -i\sqrt{\Lap},$ we see that $u$ also satisfies the wave
equation.  Of course, \eqref{halfwave} only requires a single Cauchy
datum, unlike the wave equation, so the trade-off is that the Cauchy
data of $u$ as a solution to $\Box u=0$ are constrained: we
have
$$
u(0,x) =f(x),\quad \pa_t u(0,x) = -i\sqrt{\Lap} f.
$$
The real and imaginary parts of the operator
$U(t)$ are exactly the solution operators to the (more
usual) Cauchy problem for the wave equation with $u(0,x)=f(x), \pa_t
u(0,x)=0$ and with $u(0,x)=0, \pa_t
u(0,x)=-i\sqrt{\Lap} f(x)$ respectively.

Why is the operator $U(t)$ of interest?  Well, suppose that we are
interested in the sequence of $\lambda_j$'s.  It makes sense to combine
these numbers into a generating function, and certainly one option
would be to take the exponential sum\footnote{This choice of
  generating function, corresponding to taking the wave trace, is of
  course one choice among many.  Some other approaches include taking
  the trace of the complex powers of the Laplacian or the heat trace.
  The idea of using (at least some version of) the wave trace
  originates with Levitan and Avakumovi\v{c}.}
$$
\sum_j e^{-it\lambda_j}
$$
\emph{This is, at least formally, nothing but the trace of the
  operator $U(t).$}
One of the principal virtues of this generating function is that if we
let $N(\lambda)$ denote the ``counting function''
$$
N(\lambda) =\#\{\lambda_j \leq \lambda\},
$$
then we have
$$
N'(\lambda) = \sum_j \delta(\lambda-\lambda_j),
$$
hence
$$
\sum e^{-it\lambda_j}=(2\pi)^{n/2} \mathcal{F}_{\lambda \to t} (N'(\lambda))(t).
$$

This is all a bit optimistic, as $U(t)$ is easily seen to be not of trace
class---for example at $t=0$ it is the identity.  So we should try and
think of $\Tr U(t)$ as a \emph{distribution}.  We do know
that for any test function $\varphi(t) \in \schwartz (\RR)$ and any
$f \in L^2(X),$
\begin{equation}\label{traceasdist}
\begin{aligned}
\int \varphi(t) U(t)f \, dt  &= \int (1+D_t^2)^{-k} (1+D_t^2)^k
(\varphi(t))U(t) f \, dt \\ &= \int  (1+D_t^2)^k
(\varphi(t)) (1+D_t^2)^{-k}  U(t) f\, dt \\  &= \int (1+D_t^2)^k
(\varphi(t)) (1+\Lap)^{-k}  U(t) f\, dt,
\end{aligned}
\end{equation}
since $D_t^2U=\Lap U.$
Here we can, if we like, consider $(1+\Lap)^{-k}$ to be defined by
the functional calculus; it is in fact pseudodifferential, of order $-2k.$
We easily obtain (using either point of view) the estimate:
$$
(1+\Lap)^{-k}  U(t): L^2(X) \to H^{2k}(X);
$$
hence, for $k\gg 0,$ the operator $(1+\Lap)^{-k}  U(t)$ is of trace
class.
\begin{exercise}
Prove that this operator is of trace class for $k\gg 0.$
(\textsc{Hint:} One easy route is to think about first choosing $k$
large enough that the Schwartz kernel is continuous, hence the
operator is Hilbert-Schmidt; then you can take $k$ even larger to get a
trace-class operator, by factoring into a product of two
Hilbert-Schmidt operators (see Appendix).)
\end{exercise}

Equation \eqref{traceasdist} thus establishes that
$$
\Tr U(t): \varphi \mapsto \Tr \int \varphi(t) U(t) \, dt
$$
makes sense as a distribution on $\RR.$  We can thus write
\begin{equation}\label{trformula1}
\Tr U(t) = (2\pi)^{n/2}\mathcal{F}( N')(t).
\end{equation}
where both sides are defined as distributions.  Our next goal
is to try to understand the left side of this equality through PDE
methods.

\begin{exercise}\label{exercise:computetrace}
Show that if the Schwartz kernel $K(x,y)$ of a bounded, normal
operator $T$ on $L^2(X)$
is in $\mathcal{C}^k(X)$ for sufficiently large $k,$ then $T$ is of
trace-class and
$$
\Tr T = \int K(x,x) \, dg(x).
$$
(\textsc{Hint:} Check that $K$ is trace-class as in the previous
exercise.  Then apply the spectral theorem for compact normal
operators, and use the basis of eigenfunctions of $K$ when computing
the trace.  The crucial thing to check is that if $\varphi_j$ are the
eigenfunctions, then
$$
\sum \varphi_j(x)\overline{\varphi_j(y)} =\delta_\Delta,
$$
the delta-distribution at the diagonal, since this is nothing but a
spectral resolution of the identity operator.)
\end{exercise}

As a consequence of Exercise~\ref{exercise:computetrace}, we can compute the distribution $\Tr
U(t)$ in another way if we can compute the Schwartz kernel of $U(t).$
Indeed, knowing even rather crude things about $U(t)$ can give us
some useful information here.

\begin{theorem}\label{theorem:halfwaveflow}
Let $\Phi_t$ be the geodesic flow, i.e.\
the flow generated by the Hamilton vector field of
$\abs{\xi}_g\equiv (\sum g^{ij} \xi_i\xi_j)^{1/2}.$  Then
$$
\WF U(t) f = \Phi_t( \WF f).
$$
\end{theorem}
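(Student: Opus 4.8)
\medskip

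The plan is to view $u(t,x):=U(t)f$ as a single distribution on $\RR_t\times X$ solving the first-order equation $(D_t+\sqrt\Lap)u=0$, to locate $\WF u$ via propagation of singularities, and then to restrict to the time-slices $\{t=a\}$. It is convenient to first reduce to an upper bound: since $U(t)^{-1}=U(-t)$ and $\Phi_t^{-1}=\Phi_{-t}$, it is enough to prove $\WF U(t)f\subseteq\Phi_t(\WF f)$ for all $f$ and $t$, because applying this with $f$ replaced by $U(t)f$ and $t$ by $-t$ gives $\WF f\subseteq\Phi_{-t}(\WF U(t)f)$, i.e.\ the reverse inclusion $\Phi_t(\WF f)\subseteq\WF U(t)f$.

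Next, $u$ solves $Qu=0$ with $Q=D_t+\sqrt\Lap$. Although $\sqrt\Lap\notin\Psi^1(\RR\times X)$, its principal symbol $\abs{\xi}_g$ is a genuine classical symbol of order $1$, and (as the text remarks, this is ``close enough'') that is all that the proof of Theorem~\ref{theorem:hormander} actually uses, so the conclusion of that theorem applies to $u$: $\WF u$ is a union of maximally extended null bicharacteristics of $q=\tau+\abs{\xi}_g$ in $S^*(\RR\times X)$. Since $\hamvf_q=\pa_t+\hamvf_{\abs{\xi}_g}$ and $\abs{\xi}_g$ is constant along the geodesic flow (Exercise~\ref{Exercise:geodesicflow}), the null bicharacteristic through $(0,x_0,-\abs{\xi_0}_g,\xi_0)$ is $s\mapsto(s,\Phi_s(x_0,\xi_0),-\abs{\xi_0}_g)$; in particular $\WF u\subseteq\{\tau=-\abs{\xi}_g\}$, a set on which $\xi\neq0$.

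Finally I would descend to the slices. Since $\WF u$ contains no covector of the form $(a,x,\tau,0)$, it is disjoint from the conormal bundle of every slice $\{t=a\}$, so by Exercise~\ref{exercise:WFrestriction} the restriction $u(a,\cdot)$ is well-defined and $\WF u(a,\cdot)\subseteq(\iota_a)_*\WF u$, where $\iota_a\colon X\hookrightarrow\RR\times X$ is $x\mapsto(a,x)$. Writing $S:=(\iota_0)_*\WF u\subseteq S^*X$ for the set of ``initial covectors,'' the previous paragraph shows $\WF u=\{(s,\Phi_s(x_0,\xi_0),-\abs{\xi_0}_g):s\in\RR,\ (x_0,\xi_0)\in S\}$; restricting at time $0$ gives $\WF f\subseteq S$, and restricting at time $t$ gives $\WF U(t)f\subseteq(\iota_t)_*\WF u=\Phi_t(S)$.

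The main obstacle is the remaining inclusion $S\subseteq\WF f$: that no singularity of $u$ is invisible on $\{t=0\}$. For a general solution of a non-characteristic equation this can fail, the singularity being carried entirely by the normal derivative; here, however, $\pa_t^k u|_{t=0}=(-i\sqrt\Lap)^k f$ has wavefront set exactly $\WF f$ for every $k$, because $\sqrt\Lap$ is elliptic, so no singularity can hide. I would turn this into $S\subseteq\WF f$ either by an Egorov-type argument (for $(x_0,\xi_0)\notin\WF f$ choose $B\in\Psi^0(X)$ elliptic there with $\WF'B\cap\WF f=\emptyset$, set $B(s):=U(-s)BU(s)$, note $\pa_sB(s)$ is again of order $0$ with symbol transported along $\hamvf_{\abs{\xi}_g}$, solve the transport equation to see $B(s)$ stays microsupported away from $\WF f$, and conclude $BU(s)f=U(s)B(s)f\in\CI$), or, most cleanly, by invoking the later construction of $U(t)$ as a Fourier integral operator whose canonical relation is the graph of $\Phi_t$. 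Either way $S=\WF f$, so $\WF U(t)f\subseteq\Phi_t(\WF f)$, and the reduction step completes the proof.
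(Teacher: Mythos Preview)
Your overall architecture is sound: you correctly identify that the whole result reduces to the ``hard direction'' $S\subseteq\WF f$, i.e.\ that a spacetime singularity of $u$ over $t=0$ must be visible in the restriction $u|_{t=0}=f$. Where you diverge from the paper is in how you close this gap. You propose either an Egorov-type argument or the later FIO machinery; both are valid in principle, but the Egorov route requires you to establish that $U(-s)BU(s)$ stays pseudodifferential with transported microsupport (nontrivial extra work, essentially Egorov's theorem itself), and the FIO route is circular in the order of these notes.

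The paper's device is much lighter and uses only tools already in hand: set $v=H(t-t_0)u$, so that $(\pa_t+i\sqrt\Lap)v=\delta(t-t_0)\,u(t_0,\cdot)$. If $(x_0,\xi_0)\notin\WF u|_{t=t_0}$, then by the product rule for wavefront sets (Exercise~\ref{exercise:WFrestriction}) the point $\tilde q=(t_0,x_0,-\abs{\xi_0}_g,\xi_0)$ is absent from the wavefront set of the right-hand side; and since $v\equiv 0$ for $t<t_0$, the entire backward bicharacteristic through $\tilde q$ is absent from $\WF v$. Inhomogeneous propagation (Exercise~\ref{exercise:hormandervariants}) then forces $\tilde q\notin\WF v$, hence $\tilde q\notin\WF u$. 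This gives the full lemma $(\iota_{t_0})_*\WF u=\WF u|_{t=t_0}$ at every time, after which H\"ormander's theorem yields both inclusions of the theorem at once---so your initial reduction via $U(-t)$, while correct, becomes unnecessary.
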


We begin with a lemma:
\begin{lemma}
Let $(\pa_t+i \sqrt{\Lap})u=0.$ 
Then  $$(x_0,\xi_0) \in \WF u|_{t=t_0}$$ if and only if
$$
(t=t_0,\tau=-\abs{\xi_0}, x_0, \xi_0) \in \WF u.
$$
\end{lemma}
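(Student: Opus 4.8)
The plan is to combine one constraint on $\WF u$ forced by the equation with the wavefront-set restriction theorem (Exercise~\ref{exercise:WFrestriction}). First I would rewrite the equation as $(D_t+\sqrt{\Lap})u=0$ and, since $\sqrt{\Lap}$ commutes with $D_t$, apply $D_t-\sqrt{\Lap}$ to conclude $\Box u=(D_t^2-\Lap)u=0$. As $\Box\in\Psi^2(\RR\times X)$ has characteristic set $\{\tau^2=\abs{\xi}^2_g\}$, Theorem~\ref{theorem:characteristic} gives $\WF u\subseteq\{\tau^2=\abs{\xi}^2_g\}$, which lies over $\{\xi\neq 0\}$ and splits into the two sheets $\tau=\pm\abs{\xi}_g$. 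To select the sheet $\tau=-\abs{\xi}_g$, expand $f=u|_{t=0}$ in an eigenbasis of $\Lap$: then $u(t,\cdot)=\sum_j c_j e^{-it\lambda_j}\phi_j$ with $\lambda_j\geq 0$, so $\mathcal{F}_{t\to\tau}u$ is supported in $\{\tau\leq 0\}$, whence (localizing in $(t,x)$ and using rapid decay of the localizing cutoff's Fourier transform) $\WF u\subseteq\{\tau\leq 0\}$. Combining, $\WF u\subseteq\{\tau=-\abs{\xi}_g\}$.

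By $t$-translation invariance I may take $t_0=0$. Since $\WF u$ is disjoint from the conormal bundle $N^*\{t=0\}=\{(0,\tau,x,0):\tau\neq 0\}$, the restriction $f=u|_{t=0}$ is defined and Exercise~\ref{exercise:WFrestriction} gives $\WF f\subseteq\iota_*\bigl(\WF u\cap T^*_{\{t=0\}}(\RR\times X)\bigr)$, with $\iota_*(0,\tau,x,\xi)=(x,\xi)$. This already yields the forward implication: if $(x_0,\xi_0)\in\WF f$ then $(0,\tau,x_0,\xi_0)\in\WF u$ for some $\tau$, and the first paragraph forces $\tau=-\abs{\xi_0}_g$.

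The hard part is the converse, since the restriction theorem only bounds $\WF f$ from above. Here I would assume $(x_0,\xi_0)\notin\WF f$ and split $u=\theta(t)u+\theta(-t)u$ with $\theta$ the Heaviside function --- a genuine decomposition of distributions, as $u\in\mathcal{C}^1(\RR_t;\mathcal{D}'(X))$. A short computation from $\Box u=0$ gives
$$
\Box(\theta(t)u)=-f\otimes\delta'(t)+i(\sqrt{\Lap}f)\otimes\delta(t),
$$
which is supported in $\{t=0\}$; by the tensor-product wavefront estimate of Exercise~\ref{exercise:WFrestriction}(4) and $\WF(\sqrt{\Lap}f)=\WF f$, every point of $\WF\bigl(\Box(\theta(t)u)\bigr)$ has $\tau=0$, $\xi=0$, or $(x,\xi)\in\WF f$ --- none of which holds at $(0,-\abs{\xi_0}_g,x_0,\xi_0)$. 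I then invoke inhomogeneous propagation of singularities (Exercise~\ref{exercise:hormandervariants}(2)) for $\Box$: the null bicharacteristic through $(0,-\abs{\xi_0}_g,x_0,\xi_0)$ is $s\mapsto(s,-\abs{\xi_0}_g,\Phi_s(x_0,\xi_0))$ (Exercise~\ref{Exercise:geodesicflow}(2)), and for $s<0$ it lies in $\{t<0\}$, where $\theta(t)u\equiv 0$; since the segment joining $s<0$ to $s=0$ avoids $\WF\bigl(\Box(\theta(t)u)\bigr)$, flow-invariance of $\WF(\theta(t)u)\setminus\WF(\Box(\theta(t)u))$ forces $(0,-\abs{\xi_0}_g,x_0,\xi_0)\notin\WF(\theta(t)u)$. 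The identical argument for $\theta(-t)u$, propagating into $\{t>0\}$, and then adding, gives $(0,-\abs{\xi_0}_g,x_0,\xi_0)\notin\WF u$.

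I expect this last step to be the main obstacle and to require the most care: the pairings implicit in the propagation estimate must be justified by the same smoothing-operator approximation as in the proof of Theorem~\ref{theorem:hormander}. One may alternatively run the converse directly with $P=D_t+\sqrt{\Lap}$ rather than $\Box$; although $P\notin\Psi^1(\RR\times X)$, its only defect is microsupported over $\{\xi=0\}$, so near $(0,-\abs{\xi_0}_g,x_0,\xi_0)$ it is an honest first-order operator with real symbol $\tau+\abs{\xi}_g$, which is all the propagation argument sees there.
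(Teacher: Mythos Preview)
Your proof is correct and follows the same overall strategy as the paper's: multiply $u$ by a Heaviside cutoff, observe that the resulting inhomogeneous right-hand side has no wavefront set at the point in question, and propagate regularity in from the region where the cutoff vanishes.

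The differences are technical rather than conceptual. The paper works directly with the first-order operator $P=\pa_t+i\sqrt{\Lap}$: applying $P$ to $v=H(t-t_0)u$ yields simply $\delta(t-t_0)\otimes f$, and a single Heaviside piece suffices, propagating from $\{t<t_0\}$. You instead route everything through $\Box$, which produces the extra $\delta'$ term and obliges you to treat both halves $\theta(\pm t)u$ and add; in exchange you sidestep the technicality that $\sqrt{\Lap}\notin\Psi^1(\RR\times X)$, which the paper acknowledges only in a later footnote. Likewise, your separate spectral/Fourier argument for the sign of $\tau$ replaces the paper's direct appeal to $\Sigma_P=\{\tau=-\abs{\xi}_g\}$, again for the same reason. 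Either variant is fine; yours is slightly longer but arguably cleaner on this point, and you correctly flag the first-order alternative at the end.
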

\begin{proof}\footnote{I am grateful to Andr\'as Vasy for showing me this proof.}
Suppose $q=(x_0,\xi_0) \in \WF u|_{t=t_0}.$ Since $\tilde{q}=(t=t_0,\tau=-\abs{\xi_0},
x_0, \xi_0)$ is the only vector in $\Sigma_{\pa_t+i\sqrt\Lap}$ that
projects to $(x_0,\xi_0),$ it must lie in the wavefront set of $u$ by
Exercise~\ref{exercise:WFrestriction}.

The converse is harder.  Suppose $q \notin \WF u|_{t=t_0}.$  Let $v=H(t-t_0) u,$ with $H$ denoting the
Heaviside function.  Then
$$
(\pa_t + i \sqrt{\Lap}) v= \delta(t-t_0) u(t_0,x)\equiv f.
$$
and $v$ vanishes identically for $t<t_0.$  By the last part of
Exercise~\ref{exercise:WFrestriction},
$$
\tilde{q} \notin \WF f,
$$ hence (since $\WF f$ only lies over $t=t_0$) certainly no points
along the bicharacteristic through $\tilde q$ lie in $\WF f.$
Moreover, no points along this bicharacteristic lie in $\WF v$ for
$t<t_0$ (since $v$ is in fact zero there).  Hence by the version of
the propagation of singularities in the second part of
Exercise~\ref{exercise:hormandervariants}, this bicharacteristic is
absent from $\WF u.$ In particular, $\tilde q \notin \WF u.$
\end{proof}

Theorem~\ref{theorem:halfwaveflow} now follows directly\footnote{Here is one of the places
where we should worry about the fact that $\sqrt\Lap$ is not a
pseudodifferential operator on $\RR\times X.$ This problem is seen not to
affect the proof of H\"ormander's theorem if we note that composing
$\sqrt{\Lap}$ with a pseudodifferential operator that is microsupported in
a neighborhood of the characteristic set $\{\abs{\tau}=-\abs{\xi}_g\}$
yields an operator that \emph{is} pseudodifferential, and that the symbol
calculus extends to such compositions.  (The author confesses that this is
not entirely a trivial matter.)}
from the lemma and Theorem~\ref{theorem:hormander}.

We now require a result on microlocal partitions of unity somewhat
generalizing Lemma~\ref{lemma:pof1}:
\begin{exercise}\label{exercise:pof1}
Let $\rho_j,$ $j=1,\dots, N$ be a smooth partition of unity for
$S^*X.$ Show
that there exists $A_j\in \Psi^0(X)$ with $\WF' A_j = \supp
\rho_j,$ $\hsigma_0(A_j)=\rho_j,$ $A_j^*=A_j,$ and
$$
\sum_{j=1}^N A_j^2=\Id-R,
$$
with $R \in \Psi^{-\infty}(X).$
\end{exercise}

For a distribution $u,$ let $\singsupp u$ (the ``singular support'' of
$u$) be the projection of its
wavefront set, i.e.\ the complement of the largest open set on which
it is in $\CI.$
\begin{theorem}\label{theorem:poisson}
$$\singsupp \Tr U(t) \subseteq \{0\} \cup \{\text{lengths of closed
  geodesics on } X\}.$$
\end{theorem}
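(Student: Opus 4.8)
The plan is to realize $\Tr U(t)$ as the Schwartz kernel of the family $U(\cdot)=e^{-it\sqrt\Lap}$ restricted to a partial diagonal and then integrated over $X$, and to read the singular support off from the wavefront set of that kernel. Concretely, let $K\in\mathcal D'(\RR_t\times X\times X)$ be the Schwartz kernel of $U(\cdot)$, so $K(t,\cdot,\cdot)$ is the kernel of $U(t)$. Using Exercise~\ref{exercise:computetrace} together with the trace-class discussion preceding \eqref{trformula1}, one identifies, as distributions on $\RR$,
$$
\Tr U(t)=\pi_*\iota^*K,
$$
where $\iota\colon\RR_t\times\Delta_X\hookrightarrow\RR_t\times X\times X$ is the inclusion of the partial diagonal $\Delta_X=\{(x,x)\}$ and $\pi\colon\RR_t\times X\to\RR_t$ is the projection ($X$ compact, so $\pi$ is proper and integration over $X$ behaves well). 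By Exercise~\ref{exercise:WFrestriction}, to make sense of $\iota^*K$ and of the pushforward and to track singularities it suffices to know that $\WF K$ is disjoint from the conormal $N^*(\RR_t\times\Delta_X)=\{(t,0,x,\xi,x,-\xi):\xi\neq0\}$, after which $\WF(\iota^*K)\subseteq\iota_*(\WF K)$ (restriction of covectors to the submanifold) and $\WF(\pi_*\iota^*K)\subseteq\{(t,\tau):(t,x,\tau,0)\in\WF(\iota^*K)\text{ for some }x\}$.

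The key step is to bound $\WF K$. Since $U(0)=\Id$ one has $K|_{t=0}=\delta_{\Delta_X}$, with $\WF(K|_{t=0})=N^*\Delta_X\setminus o=\{(x,\xi,x,-\xi):\xi\neq0\}$; and $K$ solves the half-wave equation $(\pa_t+i\sqrt\Lap_x)K=0$ in the $(t,x)$ variables, with $y$ a parameter, whose (real, after multiplication by $-i$) characteristic set is $\{\tau=-\abs{\xi}_g\}$ and whose null bicharacteristics lift the geodesic flow $\Phi_t$ in $(x,\xi)$ with $(y,\eta)$ held fixed. Arguing as in the lemma preceding Theorem~\ref{theorem:halfwaveflow} together with Theorem~\ref{theorem:hormander} (handling, exactly as in the footnote there, the fact that $\sqrt\Lap$ is not pseudodifferential on $\RR\times X$ by composing it with a cutoff microsupported near the characteristic set, so that one works with genuine pseudodifferential operators and the symbol calculus applies), I would conclude that $\WF K$ is the flowout of $N^*\Delta_X$, namely
$$
\WF K\subseteq\bigl\{(t,\tau,x,\xi,y,\eta):\ \tau=-\abs{\xi}_g,\ \Phi_t(y,-\eta)=(x,\xi)\bigr\}.
$$
In particular $\tau=-\abs{\xi}_g\neq0$ on $\WF K$ while $\tau=0$ on $N^*(\RR_t\times\Delta_X)\setminus o$, so the required disjointness holds and $\Tr U(t)=\pi_*\iota^*K$ is legitimate.

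It then remains to do the wavefront arithmetic. A covector in $\iota_*(\WF K)$ over $(t_0,x_0)$ has the form $(\tau_0,\xi_0+\eta_0)$ coming from some $(t_0,\tau_0,x_0,\xi_0,x_0,\eta_0)\in\WF K$; for it to contribute to $\WF(\pi_*\iota^*K)$ its $X$-component must vanish, forcing $\eta_0=-\xi_0$. Feeding this into the bound on $\WF K$ gives $\Phi_{t_0}(x_0,\xi_0)=(x_0,\xi_0)$ with $\xi_0\neq0$. Hence $t_0\in\singsupp\Tr U$ implies $\Phi_{t_0}$ fixes a point of $T^*X\setminus o$, i.e.\ (by homogeneity) a point of $S^*X$: the unit-speed geodesic flow has a periodic orbit of period $t_0$, which for $t_0\neq0$ projects to a closed geodesic of length $\abs{t_0}$ (an iterate of a primitive one if $t_0$ is not the minimal period). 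This yields $\singsupp\Tr U(t)\subseteq\{0\}\cup\{\text{lengths of closed geodesics on }X\}$.

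The main obstacle is the rigorous determination of $\WF K$: one must run propagation of singularities on $\RR_t\times X\times X$ for the kernel itself, carry the parameter $y$ through, and cope with $\sqrt\Lap$ not being a pseudodifferential operator on $\RR\times X$ (repaired, as above and as in the footnote to Theorem~\ref{theorem:halfwaveflow}, by microlocalizing near the characteristic set). A more hands-on alternative, which avoids discussing $\WF K$ on the triple product, is to use the microlocal partition of unity of Exercise~\ref{exercise:pof1} to localize the diagonal restriction in phase space: writing $\Id=\sum_jA_j^2+R$ with each $\supp\rho_j$ small and using cyclicity of the (suitably regularized) trace, one reduces modulo $\CI(\RR)$ to the terms $\Tr(A_jU(t)A_j)$, each of which is smooth near any $t_0$ with $\Phi_{t_0}(\supp\rho_j)\cap\supp\rho_j=\emptyset$; a compactness argument (taking the partition fine enough) shows that every $t_0$ which is neither $0$ nor the length of a closed geodesic is of this kind for all $j$.
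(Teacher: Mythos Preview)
Your proposal is correct, and in fact you have given two proofs. The ``alternative'' you sketch in your final paragraph is precisely the paper's proof: the paper uses the microlocal partition of unity of Exercise~\ref{exercise:pof1} to write $\Tr U(t)=\sum_j \Tr A_j U(t) A_j + \Tr R U(t)$, invokes Theorem~\ref{theorem:halfwaveflow} to see that $A_j \Lap^m U(t) A_j$ is smoothing for $t$ near $L$ when $L$ is not a length of a closed geodesic, and concludes $D_t^{2m}\Tr U(t)\in L^\infty$ near $L$. The compactness argument you allude to is exactly Lemma~\ref{lemma:covering}.

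Your \emph{main} approach, via the wavefront set of the kernel $K$ on the triple product $\RR\times X\times X$ followed by restriction and pushforward, is a genuinely different route. It is correct, and it anticipates the Fourier integral operator treatment the paper gives later in \S\ref{section:wavetraceredux}, where $\Tr U(t)$ is realized as the composition $T\circ U$ of an FIO (the trace map $T$) with the wave kernel, and the Lagrangian of the composition is computed. What your approach buys is a more conceptual picture: the singular support of $\Tr U(t)$ is read off directly from the geometry of the canonical relation, and the same machinery immediately tells you \emph{what kind} of singularity appears (a conormal distribution at each length). What the paper's partition-of-unity proof buys is that it is more elementary: it only needs Theorem~\ref{theorem:halfwaveflow}, which concerns $\WF U(t)f$ for \emph{fixed} $t$ and $f\in\mathcal D'(X)$, and so avoids the technical work of bounding $\WF K$ on the full triple product (in particular, it sidesteps the issue you correctly flag, that $\sqrt\Lap$ is not pseudodifferential on $\RR\times X\times X$ and one must argue carefully with the parameter $y$).
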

This theorem is due to Chazarain and to Duistermaat-Guillemin.

We begin with the following dynamical result:
\begin{lemma}\label{lemma:covering}
Let $L$ not be the length of any closed geodesic.  Then there exists $\ep>0$ and
a cover $U_i$ of $S^*X$ by open sets such that for $t \in (L-\ep,L+\ep),$
there exists no geodesic with start- and endpoints both contained in the
same $U_i.$
\end{lemma}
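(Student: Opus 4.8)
The plan is a soft compactness argument, whose only real input is the dictionary between closed geodesics and the geodesic flow $\Phi_t$ on $S^*X$ (Exercise~\ref{Exercise:geodesicflow}): a closed geodesic of length $L$ through $q\in S^*X$ is precisely a point with $\Phi_L(q)=q$, so the hypothesis that $L$ is not the length of a closed geodesic says exactly that the diffeomorphism $\Phi_L\colon S^*X\to S^*X$ has no fixed point (we are also tacitly using $L\neq 0$, which is harmless since in Theorem~\ref{theorem:poisson} the point $t=0$ is treated separately). First I would fix once and for all an auxiliary distance function $d$ on the compact manifold $S^*X$. Since $q\mapsto d(q,\Phi_L(q))$ is continuous and strictly positive on the compact set $S^*X$, it is bounded below: there is $\delta>0$ with
$$
d(q,\Phi_L(q))\ge \delta\qquad\text{for all }q\in S^*X.
$$

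Next I would use compactness a second time, now to upgrade the joint continuity of the flow $\Phi\colon\RR\times S^*X\to S^*X$ near $t=L$ to a \emph{uniform} statement: the quantity $\sup_{q\in S^*X}d(\Phi_t(q),\Phi_L(q))$ tends to $0$ as $t\to L$. Hence we may pick $\ep>0$ so small that $d(\Phi_t(q),\Phi_L(q))<\delta/3$ whenever $\abs{t-L}<\ep$ and $q\in S^*X$. Finally, choose any finite open cover $\{U_i\}$ of $S^*X$ by sets each of diameter less than $\delta/3$ (such a cover exists by compactness). I claim this $\ep$ and this cover do the job. Suppose not: then for some $t\in(L-\ep,L+\ep)$ there is a geodesic of length $t$ whose lift to $S^*X$ starts at a point $q$, ends at $\Phi_t(q)$, and has both $q$ and $\Phi_t(q)$ in a single $U_i$. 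Then $d(q,\Phi_t(q))<\delta/3$ because $U_i$ has small diameter, while $d(\Phi_t(q),\Phi_L(q))<\delta/3$ by the choice of $\ep$, so the triangle inequality yields
$$
\delta\le d(q,\Phi_L(q))\le d(q,\Phi_t(q))+d(\Phi_t(q),\Phi_L(q))<2\delta/3,
$$
a contradiction. This would prove the lemma.

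There is essentially no hard step here; the content is entirely in the two uses of compactness of $S^*X$ — positivity of $d(q,\Phi_L(q))$ becomes a uniform bound $\delta$, and continuity of the flow at $t=L$ becomes uniform closeness of $\Phi_t$ to $\Phi_L$ — once one recognizes the hypothesis as fixed-point-freeness of $\Phi_L$. The only point demanding a moment's care is the bookkeeping of ``start- and endpoints'': these are to be read as the initial point $q$ and the final point $\Phi_t(q)$ in $S^*X$ of the lifted bicharacteristic, after which the displayed estimate is exactly what one needs.
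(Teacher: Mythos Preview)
Your argument is correct and is exactly the compactness argument the paper has in mind: the proof in the paper is left as an exercise with the hint ``the cosphere bundle is compact,'' and you have carried out precisely that hint---translating the hypothesis into fixed-point-freeness of $\Phi_L$, extracting a uniform lower bound $\delta$ on $d(q,\Phi_L(q))$, then choosing $\ep$ and the cover by sets of small diameter. Your remark that $L\neq 0$ is being used (and that this is harmless in the application) is also a good observation.
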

\begin{exercise}\
\begin{enumerate}
\item
Prove the lemma.  (\textsc{Hint:} The cosphere bundle is compact.)
\item
As long as you're at it,
  show that $0$ is an isolated point in the set of lengths of closed
  geodesics (``length spectrum''), and that the length spectrum is a
  closed set.
\end{enumerate}
\end{exercise}

We now prove Theorem~\ref{theorem:poisson}.
\begin{proof}
Let $L$ not be the length of any closed geodesic on $X.$  Let $U_j$ be
a cover of $S^*X$ as given by Lemma~\ref{lemma:covering}.
Let $\rho_j$ be a partition of unity subordinate to $U_j$ and let
$A_j$ be a microlocal partition of unity as in
Exercise~\ref{exercise:pof1}.  Then, calculating with distributions on
$\RR^1,$ we have
\begin{align*}
\Tr U(t) &= \sum_{j} \Tr A_j^2 U(t)+ \Tr R U(t) \\
&=\sum_{j} \Tr A_j U(t) A_j + \Tr R U(t)
\end{align*}
and, more generally,
$$
D_t^{2m}\Tr U(t) = \sum_{j} \Tr A_j \Lap^{m} U(t) A_j + \Tr R \Lap^m
U(t).
$$ 
Let $u$ be a distribution on $X;$ then
$\WF A_j u\subseteq 
\WF' A_j \subset U_j.$ Thus Theorem~\ref{theorem:halfwaveflow} gives
$$
\WF \Lap^{m} U(t) A_j u \subseteq \Phi_t(U_j).
$$
But by construction, this set is disjoint from $U_j$ and hence from
$\WF' A_j.$ Hence for any $m,$\footnote{We technically have to work
  just a little to obtain the uniformity in time: observe that $A_j
  \Lap^m U(t) A_j$ are a continuous (or even smooth) \emph{family} of
  smoothing operators.  We have been avoiding the topological
  issues necessary to easily dispose of such matters, however.}
$$
A_j \Lap^{m} U(t) A_j \in L^\infty([L-\ep,L+\ep]; \Psi^{-\infty}(X));
$$
consequently,
$$
D_t^{2m}\Tr U(t) \in L^\infty([L-\ep,L+\ep]).\qed
$$
\noqed
\end{proof}

\begin{exercise}
Show that in the special case of $X=S^1,$ Theorem~\ref{theorem:poisson}
can be deduced from the Poisson summation formula.  For this reason it
is often referred to as the \emph{Poisson relation}.
\end{exercise}

One is tempted to conclude from \eqref{trformula1} and
Theorem~\ref{theorem:poisson} that one can ``hear'' the lengths of
closed geodesics on a manifold, since the right side of
\eqref{trformula1} is determined by the spectrum, and the left side
seems to be a distribution from whose singularities we can read off
the lengths of closed geodesics.  The trouble with
this approach is that we do not know with any certainty from
Theorem~\ref{theorem:poisson} that the putative singularities in $\Tr U(t)$ at
lengths of closed geodesics are actually there: perhaps the
distribution is, after all, miraculously smooth.  Thus, proving actual
inverse spectral results requires somewhat more care, as we shall
see.  To this end, we will begin studying the operator $U(t)$ more
constructively in the following section.

\section{A parametrix for the wave operator}\label{section:waveparametrix}

In order to learn more about the wave trace, we will have to bite the
bullet and construct an approximation (``parametrix'') for the
fundamental solution to the wave equation on a manifold.  The approach
will have a similar iterative flavor to the technique we used to
construct an approximate inverse for an elliptic operator, but
we have now left the comfortable world of
pseudodifferential operators: the parametrix we construct is going to
be something rather different.  Exactly what, and how to systematize
the kinds of calculation we do here, will be discussed later on.

As this construction will be local, we will work in a single
coordinate patch, which we identify with $\RR^n;$ for the sake of
exposition, we omit the coordinate maps and partitions of unity
necessary to glue this construction into a Riemannian manifold.

Consider once again the ``half-wave equation''\footnote{Remember that
  $D_t=i^{-1} \pa_t.$}
\begin{equation}\label{halfwaveblah}
(D_t+\sqrt\Lap)u=0
\end{equation}
on $\RR^n,$ where $\Lap$ is the Laplace-Beltrami operator with respect
to a metric $g.$  Our goal is to find a distribution $u$ approximately solving
\eqref{halfwaveblah} with initial data
$$
u(0,x,y)=\delta(x-y)
$$
for any $y \in \RR^n.$  Recall that if we let $U$ denote the exact
solution to \eqref{halfwaveblah} with initial data $\delta(x-y)$ then
$U$ can also be
interpreted as (the Schwartz kernel of) the ``solution operator''
mapping initial data $f$ to the solution $e^{-it\sqrt\Lap} f$ with that initial data,
evaluated at time $t;$ this is why we denote it $U,$ as we did above,
and why we will often think of our parametrix $u(t,x,y)$ as a family in $t$ of
integral kernels of operators on $\RR^n.$

We do not expect $U(t,x,y)$ or our parametrix for it to be the Schwartz kernel of a
pseudodifferential operator, as it moves wavefront set around, by
Theorem~\ref{theorem:hormander}; recall that pseudodifferential operators are
\emph{microlocal,} which is to say they don't do that.  But we will
try and construct our parametrix $u(t,x,y)$ as something of
\emph{roughly} the same form, which is to say as an oscillatory integral
$$
u(t,x,y)=\int a(t,x,\eta) e^{i\Phi}\, d\eta
$$
where the main difference is that the ``phase function''
$\Phi=\Phi(t,x,y,\eta)$ will be something a good deal more interesting
than $(x-y)\cdot \eta;$ indeed, this phase function is where all the
geometry of the problem turns out to reside.

First, let's write our initial data as an oscillatory integral:
$$
\delta(x-y) =  (2\pi)^{-n} \int e^{i(x-y)\cdot \eta} \, d\eta.
$$
Let us now try, as an Ansatz, modifying the phase as it varies in $t,x$ by setting
\begin{equation}\label{waveansatz}
u(t,x,y)=(2\pi)^{-n} \int a(t,x,\eta) e^{i (\phi(t,x,\eta)-y\cdot \eta)}\, d\eta;
\end{equation}
then if $\phi(0,x,\eta)=x\cdot\eta$ and $a(0,x,\eta)=1,$ we recover our
initial data; moreover, if $\phi$ were to remain unchanged as $t$
varied we would have nothing but a family of pseudodifferential operators.  Let
us assume that $a$ is a classical symbol of order $0$ in $\eta,$ so that we have an
asymptotic expansion
$$
a \sim a_0 + \abs{\eta}^{-1} a_{-1}+ \abs{\eta}^{-2}
a_{-2}+\dots,\quad a_j=a_j(t,x,\hat\eta).
$$
Let us further assume that $\phi$ is homogeneous in $\eta$ of degree $1,$
hence matches the homogeneity\footnote{That is is then likely to be
  singular at $\eta=0$ will not in fact concern us, as it will turn out
  that we may as well assume that $a$ vanishes near $\eta=0.$} of $x\cdot\eta.$

Now if $u$ solves the half-wave equation, it solves the wave equation,
hence we have
$$
\Box u=0;
$$
As we seek an approximate solution, we will instead accept
$$
\Box u \in \CI((-\ep,\ep)_t\times \RR^n).
$$
Our strategy is to plug  \eqref{waveansatz} into this equation and see what is forced upon
us.  
To this end, note that if we have an expression
\begin{equation}
v=(2\pi)^{-n} \int b(t,x,y,\eta) e^{i(\phi(t,x,\eta)-y\cdot \eta)}\, d\eta;
\end{equation}
where $b$ is a symbol of order $-\infty,$ then $v$ lies in $\CI,$ as the
integral converges absolutely, together with all its $t,x,y$
derivatives.  So terms of this form will be acceptable errors.

Applying $\Box$ to \eqref{waveansatz}, we group terms according to
their order in $\eta.$  The ``worst case'' terms involve factors of
$\eta^2,$ and
can only be produced by second-order terms in $\Box,$ with all
derivatives falling on
the exponential term.  Since the second-order terms in $\Lap$ are just
$$
\sum g^{ij}(x) D_i D_j,
$$
we can write the term this produces from the phase as
$\abs{d_x\phi}^2_g$ or, equivalently, $\abs{\nabla_x\phi}^2_g.$
Thus, the equation that we need to solve to make the $\eta^2$ terms
vanish is just
\begin{equation}\label{eik2}
(\pa_t\phi)^2-\abs{\nabla_x \phi}_g^2 =0.
\end{equation}
Recall that we further want our phase to agree with the standard
pseudodifferential one at time zero, i.e.\ we want
\begin{equation}\label{eik3}
\phi(0,x,\eta)=x\cdot\eta.
\end{equation}
Combining this information with \eqref{eik2} we easily see that we
in particular have
$$
(\pa_t \phi|_{t=0})^2=\abs{\eta}^2_g,
$$
and we need to make an arbitrary choice of sign in solving this to get
the initial time-derivative: we will choose\footnote{We will
  use this solution for reasons that will become apparent presently---it
  is the right one to solve \eqref{halfwave} and not merely the wave
  equation.}
\begin{equation}\label{phisignchoice}
\pa_t \phi|_{t=0}=-\abs{\eta}_g.
\end{equation}

If our metric is the Euclidean metric, we can easily solve
\eqref{eik2}, \eqref{eik3}, and \eqref{phisignchoice} by setting
$$
\phi(t,x,\eta) =x\cdot \eta -t\abs{\eta}.
$$
More generally, the construction of a phase satisfying \eqref{eik2},\eqref{eik3} and
\eqref{phisignchoice} is the classic construction of Hamilton-Jacobi
theory, and is sketched in the following exercise.
\begin{exercise}\label{exercise:hamjac}\
\begin{enumerate}\item
Show that equation \eqref{eik2} is equivalent to the statement that
for each $\eta,$ the graph of $d_{t,x} \phi(t,x,\eta)$ is contained in the set
$$\Lambda=\{\tau^2-\abs{\xi}_g^2=0\} \subset T^* (\RR_t \times
\RR^n_x)$$
(where the variables $\tau$ and $\xi$ are the canonical dual variables
to $t$ and $x$ respectively). The condition \eqref{eik3} implies
$$
d_{x} \phi(t,x,\eta)|_{t=0} = \eta \cdot dx.
$$
Equation \eqref{phisignchoice} gives further
\begin{equation}\label{signchoice2}
d_{t,x} \phi(t,x,\eta)|_{t=0} = -\abs\eta \, dt + \eta \cdot dx;
\end{equation}
accordingly, for fixed $\eta,$ let $$G_0=  \{t=0, x\in\RR^n,\tau=-\abs\eta, \xi=\eta\}\subset T^*(\RR\times \RR^n).$$
\item
Let $\hamvf$ denote the Hamilton vector field of
$\tau^2-\abs{\xi}^2_g.$  Show that flow along $\hamvf$ preserves
$\Lambda$ and that $\hamvf$ is transverse to $G_0.$
\item
Show that there is a solution to \eqref{eik2},\eqref{signchoice2} for $t \in
(-\ep,\ep)$ where the graph of $d_{t,x} \phi$ is given by
flowing out the set $G_0$ under $\hamvf.$  (Among other things, you
need to check that the resulting smooth manifold is indeed the graph
of the differential of a function.)  Show that this solution can be
integrated to give a solution to \eqref{eik2},\eqref{eik3}.
\end{enumerate}
\end{exercise}

Employing the phase $\phi$ constructed in Exercise~\ref{exercise:hamjac},
we have now solved away the homogeneous degree-two (in $\eta$) terms in the application of
$\Box$ to our parametrix.  We thus move on to the degree-one terms,
which are as follows:
\begin{equation}\label{zurich1}
2 D_t \phi D_t a_0 -  2 \ang{D_x\phi, D_x}_g a_0 +
r_1(t,x,y,\eta)
\end{equation}
where $r_1$ is a homogeneous function of degree $1$ \emph{independent
of $a_0$,} i.e.\ determined completely by $\phi.$
Given that $\phi$ solves the eikonal equation, we can rewrite
\eqref{zurich1} by factoring out $\abs{\nabla_x\phi}$ and noting that
our sign choice $\pa_t\phi=-\abs{\nabla_x\phi}$ must persist
away from $t=0$ (for a short time, anyway).  In this way we obtain
$$
  2\pa_t a_0+2\Big\langle\frac{\nabla_x\phi}{\abs{\nabla_x\phi}}_g, \pa_x\Big\rangle_g a_0
  -\tilde r_1=0,
$$
with $\tilde r_1$ homogeneous of degree $0.$
This is a \emph{transport equation} that we would like to solve, with the
initial condition $a_0(0, x,y,\eta)=1$ (the symbol of the identity operator).
We can easily see that a solution exists with the desired initial
condition $a_0(0,y,\eta)=1,$ as, letting
$$
\hamvf=2\pa_t+2\Big\langle\frac{\nabla_x\phi}{\abs{\nabla_x\phi}}_g, \pa_x\Big\rangle_g
$$
we see that $\hamvf$ is a nonvanishing vector field, transverse to $t=0,$
hence we may solve
$$
\hamvf a_0=\tilde r_1,\quad a_0|_{t=0}=1
$$
by standard ODE methods.

Now we consider degree-zero terms in $\eta.$  We find that they are of the form
$$
2 D_t \phi D_t a_{-1} -  2 \ang{D_x\phi, D_x}_g a_{-1} +
r_0(t,x,y,\eta)
$$
where $r_0$ only depends on $a_0$ and $\phi$ (i.e.\ not on $a_{-1}$).
Thus, we may use the same procedure as above to find $a_{-1}$ with
initial value zero, making the degree-zero term vanish.  (Note that
the vector field $\hamvf $ along which we need to flow remains the
same as in the previous step.)

We continue in this manner, solving successive transport equations
along the flow of $\hamvf$ so
as to drive down the order in $\eta$ of the error term.  Finally we
Borel sum the resulting symbols, obtaining a symbol
$$
a(t,x,\eta) \in S^0_\cl(\RR^{2n}_{x,y} \times \RR^n_\eta)
$$
such that
$$
a(0,x,\eta)=1,
$$
and
\begin{multline}\label{parametrixresult}
  \Box u=\Box \left((2\pi)^{-n} \int a(t,x,\eta)
    e^{i(\phi(t,x,\eta)-y\cdot \eta)}\,
    d\eta\right)\\=(2\pi)^{-n} \int b(t,x,y,\eta)
    e^{i(\phi(t,x,\eta)-y\cdot \eta)}\, d\eta \in
  \CI((-\ep,\ep)\times X),
\end{multline}
since $b \in S^{-\infty}.$

Now we need to check that \eqref{parametrixresult} implies that in
fact $u$ differs by a smooth term from the actual solution.  We will show soon (in the next section) that
our choice of the phase implies that\footnote{This can also be verified
  directly, with localization, Fourier transform, and elbow grease.}
$\WF u \subset \{\tau<0\}.$  Hence, using this fact, we have
\begin{equation}\label{factoredwave}
(\pa_t-i\sqrt\Lap)(\pa_t+i\sqrt\Lap)u =f \in \CI;
\end{equation}
Now $\pa_t-i\sqrt\Lap$ is elliptic on $\tau<0,$ so, letting $Q$ denote
a microlocal elliptic parametrix, we have
$$
Q (\pa_t-i\sqrt\Lap)=I+E
$$
with $\WF' E \cap \WF u =\emptyset.$  Thus, applying $Q$ to both sides
of \eqref{factoredwave}, we have
$$
(\pa_t+i\sqrt\Lap )  u \in \CI.
$$
Also, as we have arranged that $a(0,x,\eta)=1,$ we have got our
initial data exactly right: $u(0,x,y)=\delta(x-y).$  Letting $U$ denote the
actual solution operator to \eqref{halfwave}, we thus find
$$
(\pa_t+i\sqrt\Lap)(u-U) \in \CI,\quad u(0,x,y)-U(0,x,y)=0;
$$
hence by global energy estimates\footnote{We can either use the
  estimates developed in \S\ref{section:wave}, adapted to this variable
  coefficient setting, and with a power of the Laplacian applied to
  the solution (in order to gain derivatives); or we can apply
  Theorem~\ref{theorem:hormander}, which is overkill.}
we have
$$
u-U \in \CI((-\ep,\ep)\times \RR^n).
$$

\section{The wave trace}
Our treatment of this material (and, in part, that of the previous
section) closely follows the treatment in \cite{GrigisSjostrand}, which is in
turn based on work of H\"ormander \cite{Hormander:Spectral}.

Recall that, if $N(\lambda)=\#\{\lambda_j \leq \lambda\}$ and $U(t)$
is given by \eqref{waveop}, then
\begin{equation}\label{ftofcounting}
\Tr U(t) =(2\pi)^{n/2}\mathcal{F}(N'(\lambda)).
\end{equation}
Thus, the singularities of $\Tr U(t)$ are related to the growth of
$N(\lambda).$ We think that $\Tr U(t)$ should have singularities at
zero, together with lengths of closed geodesics; since $U(0)$ is the
identity (which has a very divergent trace), the singularity at $t=0,$
at least, seems certain to appear.  We will thus spend some time
discussing this singularity of the wave trace and its consequences for
spectral geometry.

What is the form of the singularity of $\Tr U(t)$ at $t=0$?  Our
parametrix from the previous section was
$$
u(t,x,y)=(2\pi)^{-n} \int a(t,x,\eta) e^{i(\phi(t,x,\eta) -y\cdot\eta)}\, d\eta,
$$
where $\phi(t,x,\eta) = x\cdot \eta -t \abs{\eta}_{g(x)}+O(t^2),$
and $a(t,x,\eta) = 1+O(t).$  Thus,
\begin{equation}\label{ondiag}
u(t,x,x)=(2\pi)^{-n} \int a(t,x,\eta) e^{i(-t\abs{\eta}_{g(x)} + O(t^2\abs{\eta}))}\, d\eta,
\end{equation}
where we have used the homogeneity of the phase in writing the error
term as $O(t^2\abs{\eta}).$

Formally, we would now like to conclude that the singularity at $t=0$
is approximately that of
$$
u(t,x,x)=(2\pi)^{-n} \int e^{-it\abs{\eta}_{g(x)}} \, d\eta
$$
so that integrating in $x$ would give, if all goes well,
\begin{equation}\label{heuristic}
\begin{aligned}
\Tr U(t) &\sim \int u(t,x,x) \, dx  \\ &\sim (2\pi)^{-n} \iint
e^{-it\abs{\eta}_{g}} \, d\eta\, dx\\
&= (2\pi)^{-n} \iiint_{\sigma>0,\abs\theta=1} e^{-it \sigma \abs{\theta}_g} \sigma^{n-1}\,  d \sigma
\, d\theta\, dx\\ 
& = (2\pi)^{-n/2}\iint \fcal(\sigma^{n-1} H(\sigma))(t\abs{\theta}_g)
\, d\theta \, dx,
\end{aligned}
\end{equation}
with $H$ denoting the Heaviside function.
(Recall that the notation $f\sim g$ means that $(f/g) \to 1,$ in this
case as $t \to 0.$)
If we crudely try to solve \eqref{ftofcounting} for $N'(\lambda)$ by applying an inverse
Fourier transform to $\Tr U(t)$ and pretending that the
singularity of $\Tr U(t)$ at $t=0$ is all that matters, we find, formally,
that \eqref{heuristic} yields
\begin{align*}
N'(\lambda) &\sim (2\pi)^{-n/2}\mathcal{F}_{t\to\lambda}^{-1}\Tr U(t) \\
&\sim (2\pi)^{-n} \iint_{\abs{\theta}=1} \abs{\theta}_g^{-1}
\Big(\frac{\lambda}{\abs{\theta}_g}\Big)^{n-1} \, d\theta \, dx\\
&= (2\pi)^{-n}\lambda^{n-1} \iint_{\abs{\theta}=1} \abs{\theta}_g^{-n} \, d\theta
\, dx.
\end{align*}
Integrating would formally yield
\begin{align*}
N(\lambda) &\sim (2\pi)^{-n}\frac{\lambda^n}{n}  \iint_{\abs{\theta}=1} \abs{\theta}_g^{-n} \, d\theta
\, dx\\
&= (2\pi)^{-n}\lambda^n \iiint_{\abs{\theta}=1,\rho\in (0,1)} \abs{\theta}_g^{-n}
\rho^{n-1} \, d\rho \, d\theta
\, dx\\
&=(2\pi)^{-n}\lambda^n \iiint_{\abs{\sigma\theta}_g<1} \sigma^{n-1} \, d\sigma d\theta\, dx,
\end{align*}
where we have, in the last line, set $\sigma=\rho/\abs{\theta}_g,$
with the result that definition of the region of integration now
involves the metric.  This last quantity can easily be seen to be
simply the volume in phase space of the set $\abs{\xi}_g<1,$ otherwise
known as the unit ball bundle.\footnote{Recall that on a symplectic
  manifold $(N^{2n},\omega)$ we have a naturally defined volume form
  $\omega^n,$ and it is this volume that we are integrating over the
  unit ball here.}
Thus, we obtain \emph{formally}
$$
N(\lambda) \sim (2\pi)^{-n} \lambda^{n} \Vol(B^*X)=(2\pi)^{-n} \Vol(\{\abs{\xi}_g<\lambda\}).
$$

This is all nonsense, of course, for several different reasons.
First, we were very imprecise about dropping higher order terms in $t$
in computing the asymptotics of the trace as $t\to 0.$ Furthermore, we
formally computed with $N'$ as if it were a smooth function, but of
course $N'$ is quite singular (a sum of delta distributions).
Moreover, and potentially most seriously, there are in general
infinitely many singularities in $\Tr U(t)$ that might be contributing
to the asymptotic behavior of its Fourier transform: we have been
concerning ourselves only with the one near $t=0.$ However: the above argument does
give the right leading order asymptotics, the so-called ``Weyl Law.''
What follows is (the outline of) a rigorous version of the above argument.

To begin, we need a cutoff function to localize us near the
singularity at $t=0,$ where our parametrix is valid.
\begin{exercise}
  Show that there exists $\rho \in \schwartz(\RR)$ with $\hat\rho$
  compactly supported, $\hat\rho(0)=1,$ $\hat\rho(t)=\hat\rho(-t),$
  $\rho(\lambda) >0$ for all $\lambda,$ and $\hat \rho$ supported in an
  arbitrarily small neighborhood of $0.$ (\textsc{Hint:} Start with a
  smooth, compactly supported $\hat\rho;$ convolve with its complex
  conjugate, and scale.)
\end{exercise}

We now consider
\begin{multline*}
\fcal^{-1}_{t \to\lambda}\big(\hat\rho(t) \Tr u(t)\big) \\=(2\pi)^{-n-1/2}
\iiint \hat\rho(t) a(t,x,\eta) e^{i(t(\lambda-\abs{\eta}_{g}) +
  O(t^2\abs{\eta}))}\, dx \, d\eta \, dt \\ = (2\pi)^{-n-1/2} \iiint
\hat\rho(t) a(t,x,\lambda\sigma\theta) e^{it\lambda (1-\sigma + O(t^2\sigma))}(\lambda\sigma)^{n-1}\, dx \,
d\sigma \, d\theta \, dt;
\end{multline*}
here we have used the change of variables $\eta=\lambda \sigma \theta$
with $\abs{\theta}=1.$
We now employ the \emph{method of stationary phase} to estimate the
asymptotics of the integral in $t,\sigma.$  If $\hat\rho$ is chosen
supported sufficiently close to the origin, then the unique stationary
point on the support of the amplitude is at $\sigma=1,$ $t=0;$ we thus
obtain a complete asymptotic expansion in $\lambda$ beginning with the
terms
$$
A \lambda^{n-1}+O(\lambda^{n-2})
$$
where $$A= n(2\pi)^{-n}  \Vol(B^* X).$$  
\begin{Exercise}
Do this stationary phase computation.  If you don't know about the
method of stationary phase, this is your chance to learn it, e.g.\
from \cite{Hormander:v1}.
\end{Exercise}

Thus, since $u-U \in \CI((-\ep,\ep) \times \RR^n),$ 
\eqref{ftofcounting} yields
\begin{proposition}\label{proposition:averagedweyl}
$$(\rho* N') (\lambda) \sim A \lambda^{n-1}+O(\lambda^{n-2}).$$
\end{proposition}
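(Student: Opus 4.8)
The plan is to read the statement off the stationary-phase computation set up just above, using \eqref{ftofcounting} to translate it into information about $N'$. First, since $N$ has at most polynomial growth, $N'$ is a tempered distribution and $\rho * N'$ is a well-defined smooth function of polynomial growth; by the convolution theorem together with \eqref{ftofcounting}, its Fourier transform equals, up to a fixed normalizing constant, $\hat\rho(t)\,\Tr U(t)$. Inverting the Fourier transform gives
$$
(\rho*N')(\lambda) = c_n\,\mathcal{F}^{-1}_{t\to\lambda}\bigl(\hat\rho(t)\,\Tr U(t)\bigr)(\lambda),
$$
with $c_n$ depending only on the dimension and the Fourier normalization, so it remains to analyze the right-hand side.

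Second, I would replace the exact wave solution operator $U(t)$ by the parametrix $u(t,x,y)$ of \S\ref{section:waveparametrix}. Because $u - U \in \CI((-\ep,\ep)\times\RR^n)$, the Schwartz kernel of $u(t,\cdot,\cdot) - U(t)$ is jointly smooth in $(t,x,y)$ for $t$ near $0$, so the corresponding operator is trace class with trace smooth in $t$; choosing $\hat\rho$ supported in an arbitrarily small neighborhood of $0$ --- in particular inside $(-\ep,\ep)$ --- we get that $\hat\rho(t)\bigl(\Tr U(t) - \Tr u(t)\bigr)$ is smooth and compactly supported, hence its inverse Fourier transform lies in $\schwartz(\RR)$ and contributes only an $O(\lambda^{-\infty})$ error, negligible against the powers of $\lambda$ appearing below. (Strictly, the parametrix was built in a single coordinate chart; patching with a partition of unity is harmless, since the resulting off-diagonal kernels are smooth and contribute nothing to the trace singularity.)

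Third, I would invoke the computation preceding the statement: writing $\Tr u(t) = \int u(t,x,x)\,dx$ with $u(t,x,x)$ the oscillatory integral \eqref{ondiag}, substituting $\eta = \lambda\sigma\theta$ with $\abs\theta = 1$, and applying the method of stationary phase in $(t,\sigma)$ about the unique nondegenerate critical point $t=0,\ \sigma=1$ (valid because $\hat\rho$ is supported close to $0$), one obtains a complete asymptotic expansion in $\lambda$ whose leading term is $A\lambda^{n-1}$ with $A = n(2\pi)^{-n}\Vol(B^*X)$; the amplitude and Hessian at the critical point assemble exactly into the symplectic volume of the unit ball bundle, as in the exercise just above. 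Combining the three steps yields $(\rho * N')(\lambda) = A\lambda^{n-1} + O(\lambda^{n-2})$, which is the assertion.

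The one genuinely substantive step is the stationary-phase evaluation, and in particular the bookkeeping that produces the constant $A$ (including checking that the normalizing constant $c_n$ of the first step is absorbed correctly); everything else is merely assembling facts already in hand. A secondary technical point, suppressed here exactly as ``the fine print'' was suppressed after Theorem~\ref{theorem:hormander}, is the justification of treating $\Tr u(t)$ --- defined by interchanging the $\eta$-, $x$- and $t$-integrations in an oscillatory integral --- as a genuine tempered distribution on which the Fourier transform acts; this is routine but not entirely trivial.
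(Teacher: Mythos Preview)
Your proposal is correct and follows essentially the same route as the paper: you convert $(\rho*N')(\lambda)$ into $\mathcal{F}^{-1}_{t\to\lambda}(\hat\rho(t)\Tr U(t))$ via \eqref{ftofcounting}, replace $U$ by the parametrix $u$ at the cost of a Schwartz error, and then invoke the stationary-phase computation immediately preceding the proposition. The paper presents these steps in a slightly different order (doing the stationary-phase computation first and then noting that $u-U\in\CI$ suffices to pass from $u$ to $U$), but the content is the same.
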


We now try to make a ``Tauberian'' argument to extract the desired
asymptotics of $N(\lambda)$ from this estimate.
\begin{lemma}
$$N(\lambda+1)-N(\lambda) =O(\lambda^{n-1}).$$
\end{lemma}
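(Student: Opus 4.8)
The plan is to exploit the two positivity features built into the setup: the function $\rho$ was constructed to satisfy $\rho(\lambda)>0$ for \emph{all} $\lambda$, and $N'(\lambda)=\sum_j \delta(\lambda-\lambda_j)$ is a sum of \emph{nonnegative} point masses. First I would record that, since $\rho$ is continuous and strictly positive, there is a constant $c>0$ with $\rho(\mu)\ge c$ for all $\mu\in[-1,0]$. Next I would observe that the convolution $\rho*N'$, which by Proposition~\ref{proposition:averagedweyl} satisfies $(\rho*N')(\lambda)\sim A\lambda^{n-1}+O(\lambda^{n-2})$, is given pointwise by the absolutely convergent sum
$$
(\rho*N')(\lambda)=\sum_j \rho(\lambda-\lambda_j);
$$
here convergence (and the identification of the distributional convolution with this sum) follows from the fact that $\rho\in\schwartz(\RR)$ together with an a priori polynomial bound on $N(\lambda)$, say $N(\lambda)=O(\lambda^n)$, which is itself a consequence of Proposition~\ref{proposition:averagedweyl} (integrating the averaged estimate, or simply from boundedness of $(\rho*N')$ on bounded sets plus monotonicity of $N$).

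The heart of the argument is then a one-line estimate: discarding all but the terms with $\lambda_j\in[\lambda,\lambda+1]$, each of which contributes $\rho(\lambda-\lambda_j)\ge c$ since $\lambda-\lambda_j\in[-1,0]$, and all of which are nonnegative, we get
$$
(\rho*N')(\lambda)\;\ge\; \sum_{\lambda\le\lambda_j\le\lambda+1}\rho(\lambda-\lambda_j)\;\ge\; c\,\#\{j:\lambda_j\in[\lambda,\lambda+1]\}\;=\;c\bigl(N(\lambda+1)-N(\lambda)\bigr).
$$
Combining with Proposition~\ref{proposition:averagedweyl} yields $N(\lambda+1)-N(\lambda)\le c^{-1}(\rho*N')(\lambda)=O(\lambda^{n-1})$, which is the claim.

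There is no serious obstacle here; this is the standard Tauberian trick of using a positive mollifier to dominate a counting increment. The only point requiring a modicum of care is the justification that the distributional identity $(\rho*N')(\lambda)=\sum_j\rho(\lambda-\lambda_j)$ holds pointwise (and that the sum converges), which is where the rough polynomial bound on $N$ is used; everything else is immediate from positivity of $\rho$ and of the spectral measure. I would also note in passing that the same argument with $\rho$ replaced by its rescalings gives $N(\lambda+\delta)-N(\lambda)=O(\lambda^{n-1})$ for any fixed $\delta>0$, which is the form that will feed into the final Tauberian step producing the Weyl law.
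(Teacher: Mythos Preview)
Your proof is correct and is essentially the same as the paper's: both use the strict positivity of $\rho$ on a compact interval to bound the counting increment by the smoothed spectral measure $\sum_j\rho(\lambda-\lambda_j)$, then invoke Proposition~\ref{proposition:averagedweyl}. The only cosmetic difference is that the paper takes the infimum of $\rho$ over $[-1,1]$ and counts eigenvalues in $(\lambda-1,\lambda+1)$, whereas you use $[-1,0]$ and $[\lambda,\lambda+1]$; your extra remarks on justifying the pointwise sum identity are more careful than the paper bothers to be.
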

\begin{proof}
By Proposition~\ref{proposition:averagedweyl} and since
$N'(\lambda)=\sum \delta(\lambda-\lambda_j),$  we have
$$
\sum \rho(\lambda-\lambda_j) \sim A \lambda^{n-1}+O(\lambda^{n-2});
$$
thus, by positivity of $\rho(\lambda),$
$$
(\inf_{[-1,1]} \rho) \left(\#\{\lambda_j: \lambda-1<\lambda_j<\lambda+1\}\right)
\leq \sum \rho(\lambda-\lambda_j) =O(\lambda^{n-1}),
$$
and the estimate follows as the infimum is strictly positive.
\end{proof}
This yields at least a crude estimate:
\begin{corollary}
$$N(\lambda) = O(\lambda^n).$$
\end{corollary}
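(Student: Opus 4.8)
The plan is to telescope the Lemma over unit intervals; this is the standard (elementary) Tauberian bookkeeping that converts an increment bound into a growth bound. First I would record a uniform form of the Lemma: there is a constant $C$ such that
$$
N(\mu+1)-N(\mu)\le C(1+\mu)^{n-1}\quad\text{for all }\mu\ge 0.
$$
For large $\mu$ this is exactly the content of the Lemma (the implied constant there is uniform in $\mu$, by the meaning of the $O$-notation). For $\mu$ ranging over a bounded set, $N(\mu+1)-N(\mu)$ is itself bounded, since the spectrum of $\Lap$ is discrete (Exercise~\ref{exercise:Lapspectrum}), so only finitely many $\lambda_j$ can lie below any fixed threshold; hence the displayed inequality holds after enlarging $C$. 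Also note $N(0)<\infty$ for the same reason, and that $N$ is non-decreasing, so that for any $\lambda\ge 0$ we have $N(\lambda)\le N(m)$ with $m=\lceil\lambda\rceil$.

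Next I would write $N(m)$ as a telescoping sum and apply the bound term by term:
$$
N(m)=N(0)+\sum_{k=0}^{m-1}\bigl(N(k+1)-N(k)\bigr)\le N(0)+C\sum_{k=0}^{m-1}(1+k)^{n-1}.
$$
The remaining sum is $\sum_{j=1}^{m}j^{n-1}$, which is comparable to $\int_{0}^{m}x^{n-1}\,dx=m^{n}/n$; more precisely it is bounded by $\int_{1}^{m+1}x^{n-1}\,dx=\bigl((m+1)^{n}-1\bigr)/n=O(m^{n})$. Since $m=\lceil\lambda\rceil\le\lambda+1$, combining these gives $N(\lambda)\le N(m)=O(\lambda^{n})$ as $\lambda\to\infty$, which is exactly the Corollary.

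There is essentially no genuine obstacle here: the only points requiring a moment's care are (i) that the constant in the Lemma does not depend on $\lambda$, so that summing $O(m)$-many copies of the increment estimate still produces a single clean power of $\lambda$ rather than an extra logarithmic or polynomial factor, and (ii) that $N$ is finite on bounded intervals, which is where the discreteness of the Laplace spectrum enters. Everything else is the monotonicity of $N$ together with the elementary estimate $\sum_{j\le m}j^{n-1}=O(m^{n})$.
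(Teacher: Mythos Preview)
Your proposal is correct and is exactly the intended argument: the paper leaves this as an exercise, and the standard telescoping of the increment bound $N(\mu+1)-N(\mu)=O(\mu^{n-1})$ over unit intervals, together with monotonicity of $N$ and the elementary estimate $\sum_{j\le m} j^{n-1}=O(m^n)$, is precisely what is meant.
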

A more technically useful result is:
\begin{corollary}\label{cor:technical}
$$N(\lambda-\tau)-N(\lambda) \lesssim \ang{\tau}^n\ang{\lambda}^{n-1}.$$
\end{corollary}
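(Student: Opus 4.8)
The plan is to reduce the corollary to the preceding Lemma, $N(\mu+1)-N(\mu)=O(\ang{\mu}^{n-1})$, via a telescoping sum. First I would note that the Lemma, stated there for $\mu\to+\infty$, in fact holds uniformly for \emph{all} $\mu\in\RR$: the left-hand side is a bounded function of $\mu$ on any compact set, and it vanishes identically for $\mu<-1$, so the $O$-estimate persists down to an absolute constant. With this uniform form in hand the rest is bookkeeping.

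Next I would dispose of the sign $\tau\ge 0$: by monotonicity of $N$ we have $N(\lambda-\tau)-N(\lambda)\le 0$, so the asserted bound is trivial. (If the intended statement is the two-sided one, $|N(\lambda-\tau)-N(\lambda)|\lesssim\ang{\tau}^n\ang{\lambda}^{n-1}$, I would treat $\tau\ge 0$ by the same telescoping used below, together with the crude bound $N(\lambda)=O(\lambda^n)$ from the previous corollary to cover the regime $\tau\ge\lambda$ where $N(\lambda-\tau)=0$.) So assume $\tau<0$, put $s=-\tau>0$ and $k=\lceil s\rceil\in\NN$. Then by monotonicity and telescoping,
\[
N(\lambda-\tau)-N(\lambda)\ =\ N(\lambda+s)-N(\lambda)\ \le\ N(\lambda+k)-N(\lambda)\ =\ \sum_{j=0}^{k-1}\bigl(N(\lambda+j+1)-N(\lambda+j)\bigr).
\]
I would bound each summand by $C\ang{\lambda+j}^{n-1}$ using the uniform Lemma, then use Peetre's inequality $\ang{\lambda+j}\lesssim\ang{\lambda}\ang{j}\le\ang{\lambda}\ang{k}\lesssim\ang{\lambda}\ang{\tau}$ (the last step since $k=\lceil|\tau|\rceil\le|\tau|+1\lesssim\ang{\tau}$) to see that every summand is $\lesssim\ang{\lambda}^{n-1}\ang{\tau}^{n-1}$. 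Summing the $k\lesssim\ang{\tau}$ terms then yields $N(\lambda-\tau)-N(\lambda)\lesssim\ang{\tau}^{n}\ang{\lambda}^{n-1}$, which is the claim.

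The whole argument is routine; the only point requiring a bit of care is the uniformity of constants — that the $O$-constant in the Lemma may be taken independent of $\mu$ over all of $\RR$ (in particular in the range where the Weyl asymptotics are vacuous), and that it survives the telescoping with a constant independent of both $\tau$ and $\lambda$. I do not expect any genuine obstacle.
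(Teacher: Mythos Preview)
Your proof is correct and follows essentially the same route as the paper's hinted argument: telescope the difference into unit steps, bound each step by the Lemma $N(\mu+1)-N(\mu)=O(\ang{\mu}^{n-1})$, and use a Peetre-type inequality. The paper's hint organizes this via the intermediate bound $\ang{\tau}\ang{|\lambda|+|\tau|}^{n-1}$ (i.e.\ bounding each of the $\lesssim\ang{\tau}$ summands by the single worst term $\ang{|\lambda|+|\tau|}^{n-1}$ before splitting), whereas you apply Peetre termwise; the difference is cosmetic. Your remarks on uniformity of the Lemma for all $\mu\in\RR$ and on the two-sided version (needed for the subsequent Tauberian step) are appropriate and correct.
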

\begin{exercise}
Prove the corollaries.  (For the latter, begin with the intermediate
estimate $\ang{\tau}\ang{\abs{\lambda}+\abs{\tau}}^{n-1}$.)
\end{exercise}

Now we work harder.

\begin{exercise}
Show that we can antidifferentiate the convolution to get
$$
\int_{-\infty}^\lambda (\rho*N')(\mu)\, d\mu =
(\rho*N)(\lambda).
$$
\end{exercise}

As a result, we of course have
$$
(\rho*N)(\lambda)=A\lambda^n/n+O(\lambda^{n-1}) = B \lambda^n+O(\lambda^{n-1})
$$
where $B=A/n= (2\pi)^{-n} \Vol(B^*X).$

Thus, since $\int \rho(\mu) \, d\mu=1,$
\begin{align*}
N(\lambda) &= (N*\rho)(\lambda) -\int (N(\lambda-\mu)-N(\lambda) )\rho(\mu) \, d\mu \\
&= B \lambda^n +O(\lambda^{n-1}) - \int O(\ang{\mu}^n\ang{\lambda}^{n-1}) \rho(\mu)
\, d\mu \\
&= B \lambda^n +O(\lambda^{n-1}),
\end{align*}
where we have used Corollary~\ref{cor:technical} in the penultimate
equality.  We record what we have now obtained as a theorem, better known as Weyl's law
with remainder term.  This
form of the remainder term is sharp, and not so easy to obtain by
other means.
\begin{theorem}
$$N(\lambda) = (2\pi)^{-n} \Vol(B^*X) \lambda^n + O(\lambda^{n-1}).$$
\end{theorem}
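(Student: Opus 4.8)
The plan is to run a Tauberian argument that upgrades the smoothed asymptotics already in hand to a genuine asymptotic for $N(\lambda)$, incurring only an $O(\lambda^{n-1})$ error. All the analytic input---the parametrix for $U(t)$ from \S\ref{section:waveparametrix}, the validity of the oscillatory integral representation of $\Tr U(t)$ near $t=0$, and the stationary phase computation---has been packaged into Proposition~\ref{proposition:averagedweyl}, which asserts $(\rho*N')(\lambda) = A\lambda^{n-1}+O(\lambda^{n-2})$ with $A=n(2\pi)^{-n}\Vol(B^*X)$. So the remaining work is purely real-variable.

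First I would antidifferentiate this relation. Since $\rho*N$ has derivative $\rho*N'$, and since there are only finitely many $\lambda_j$ below any bound while $\rho$ is Schwartz, both sides decay suitably as $\lambda\to-\infty$; integrating from $-\infty$ to $\lambda$ gives
$$
(\rho*N)(\lambda) = B\lambda^n + O(\lambda^{n-1}),\qquad B=\frac An = (2\pi)^{-n}\Vol(B^*X).
$$
Next comes the Tauberian step proper. Using $\int\rho = \hat\rho(0)=1$, write
$$
N(\lambda) = (N*\rho)(\lambda) - \int\big(N(\lambda-\mu)-N(\lambda)\big)\rho(\mu)\,d\mu.
$$
The first term is $B\lambda^n+O(\lambda^{n-1})$ by the previous paragraph. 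For the second term I would invoke Corollary~\ref{cor:technical}, which bounds the increment $\abs{N(\lambda-\mu)-N(\lambda)}\lesssim \ang{\mu}^n\ang{\lambda}^{n-1}$ uniformly in $\mu$, so that
$$
\Big\lvert\int\big(N(\lambda-\mu)-N(\lambda)\big)\rho(\mu)\,d\mu\Big\rvert \lesssim \ang{\lambda}^{n-1}\int\ang{\mu}^n\abs{\rho(\mu)}\,d\mu = O(\lambda^{n-1}),
$$
the last integral being finite because $\rho\in\schwartz(\RR)$. Combining the two estimates yields $N(\lambda)=B\lambda^n+O(\lambda^{n-1})$, which is the assertion.

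The one genuinely subtle ingredient is Corollary~\ref{cor:technical}, the a priori polynomial control on increments of $N$. This is where \emph{positivity} of $\rho$---arranged deliberately in the construction of the cutoff---is essential: from $\rho>0$ and $\sum_j\rho(\lambda-\lambda_j)=O(\lambda^{n-1})$ one gets that any unit window $[\lambda-1,\lambda+1]$ contains at most $O(\lambda^{n-1})$ eigenvalues, and summing $O(\ang{\tau})$ such windows produces the bound $N(\lambda-\tau)-N(\lambda)\lesssim \ang{\tau}\ang{\abs{\lambda}+\abs{\tau}}^{n-1}\lesssim \ang{\tau}^n\ang{\lambda}^{n-1}$. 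Everything else is bookkeeping. It is worth noting that the argument localizes entirely to the singularity of $\Tr U(t)$ at $t=0$: the only constraint on $\hat\rho$ is that its support be small enough that the parametrix and stationary phase apply, so we never need to know whether the (possibly present) singularities of $\Tr U(t)$ at lengths of closed geodesics actually occur.
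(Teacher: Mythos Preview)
Your argument is correct and follows essentially the same Tauberian route as the paper: antidifferentiate Proposition~\ref{proposition:averagedweyl} to get $(\rho*N)(\lambda)=B\lambda^n+O(\lambda^{n-1})$, then write $N(\lambda)=(\rho*N)(\lambda)-\int(N(\lambda-\mu)-N(\lambda))\rho(\mu)\,d\mu$ and control the integral via Corollary~\ref{cor:technical}. Your closing paragraph also correctly identifies the mechanism behind that corollary (positivity of $\rho$ giving a unit-window eigenvalue count, then summing windows), which is exactly how the paper derives it.
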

As noted above, it is perhaps suggestive to view the main term as the volume of the
sublevel set in phase space $\{ (x,\xi): \sigma(\Lap)(x,\xi) \leq
\lambda^2\}.$  Weyl's law is one of the most beautiful instances of
the quantum-classical correspondence, in which we can deduce something
about a quantum quantity (the counting function for eigenvalues, also
known as energy levels) in terms of a classical quantity, in this case
the volume of a region of phase space.

\begin{Exercise}
Show that the error term in Weyl's law is sharp on spheres.
\end{Exercise}

\section{Lagrangian distributions}

The form of the parametrix that we used for the wave equation turns
out to be a special case of a very general and powerful class of
distributions, known as \emph{Lagrangian distributions}, introduced by
H\"ormander.  Here we will give a very sketchy introduction to the
general theory of Lagrangian distributions, and see both how it
systematizes and extends our parametrix construction for the wave
equation and how (in principle, at least) it can be made to
yield the Duistermaat-Guillemin trace formula, which gives us
an explicit description of the singularities of the wave trace.

We begin with a special case of the theory.

\subsection{Conormal distributions}

Let $X$ be a smooth manifold of dimension $n$ and let $Y$ be a
submanifold of codimension $k.$  The \emph{conormal distributions}
with respect to $Y$ are a special class of distributions having wavefront
set\footnote{Recall that we have defined the wavefront set to lie in $S^*X$ but it
  is often convenient to regard it as a \emph{conic} subset of
  $T^*X\backslash o,$ with $o$ denoting the zero-section.}
in the conormal bundle of $Y,$ $N^*Y.$  Let us suppose that $Y$ is
locally cut out by defining functions $\rho_1,\dots, \rho_k\in \CI(X),$ i.e.\
that (at least locally), $\{\rho_1=\dots =\rho_k=0\}=Y,$ and
$d\rho_1,\dots, d\rho_k$ are linearly independent on $Y.$  Then we may
(locally) extend the $\rho_j$'s to a complete coordinate system
$$(x_1,\dots x_k,y_1,\dots y_{n-k})$$ with
$$
x_1=\rho_1,\dots,x_k=\rho_k,
$$
so that $Y=\{x=0\}.$  In these coordinates, how might we write
down some distributions with wavefront set lying only in $N^*Y$?
Well, we can try to make things that are singular in the $x$ variables
at $x=0,$ with the $y$'s behaving like smooth parameters.  How do we
create singularities at $x=0$?  One very nice answer is in the
following:

\begin{lemma}
Let $a(\xi) \in S^m_\cl(\RR^k_\xi)$ for some $m.$  Then $\WF
\mathcal{F}^{-1}(a) \subseteq N^*(\{0\}).$
\end{lemma}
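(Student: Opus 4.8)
The plan is to reduce the entire statement to the assertion that $\mathcal{F}^{-1}(a)$ is smooth away from the origin. First I would note that a classical symbol of order $m$ is polynomially bounded by $\ang\xi^m$, hence tempered, so $\mathcal{F}^{-1}(a)\in\schwartz'(\RR^k)$ is well-defined. Next, recall that for a \emph{point} the conormal bundle $N^*(\{0\})$ is nothing but the entire punctured cotangent fibre over the origin: the tangent space of a point is trivial, so its annihilator is all of $T_0^*\RR^k$. Consequently, writing $\pi$ for the projection $S^*\RR^k\to\RR^k$, we have $N^*(\{0\})=\pi^{-1}(\{0\})$, and since $\pi(\WF u)=\singsupp u$ always, the desired inclusion $\WF\mathcal{F}^{-1}(a)\subseteq N^*(\{0\})$ is \emph{equivalent} to $\singsupp\mathcal{F}^{-1}(a)\subseteq\{0\}$, i.e.\ to $\mathcal{F}^{-1}(a)\in\CI(\RR^k\setminus o)$. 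So there is only one thing to prove, and it carries no directional content at all.

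To prove that, I would use the elementary Fourier identities exchanging multiplication by polynomials and differentiation: for multi-indices $\alpha,\beta$,
$$
x^\alpha D_x^\beta\,\mathcal{F}^{-1}(a)=c_{\alpha\beta}\,\mathcal{F}^{-1}\!\big(D_\xi^\alpha(\xi^\beta a)\big)
$$
for suitable constants $c_{\alpha\beta}$. Now $\xi^\beta a\in S^{m+|\beta|}_\cl(\RR^k)$, and since each $\xi$-derivative lowers the symbol order by one (immediate either from the Kohn--Nirenberg estimate $|\partial_\xi^\gamma a|\lesssim\ang\xi^{m-|\gamma|}$ or from smoothness on the radial compactification of $\RR^k_\xi$), we get $D_\xi^\alpha(\xi^\beta a)\in S^{m+|\beta|-|\alpha|}_\cl$. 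Choosing $|\alpha|$ large enough that $m+|\beta|-|\alpha|<-k$, this function lies in $L^1(\RR^k)$, so its inverse Fourier transform is bounded and continuous. Applying this with $\alpha$ running over the monomials occurring in $|x|^{2M}=(\sum x_j^2)^M$ for $2M>m+|\beta|+k$, we conclude that $|x|^{2M}D_x^\beta\,\mathcal{F}^{-1}(a)$ is a continuous function on $\RR^k$. Dividing by $|x|^{2M}$, which is smooth and nonvanishing on $\RR^k\setminus o$, shows $D_x^\beta\,\mathcal{F}^{-1}(a)$ is continuous there; since $\beta$ was arbitrary, $\mathcal{F}^{-1}(a)\in\CI(\RR^k\setminus o)$, which is exactly what was needed.

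I do not expect a genuine obstacle here. The one point deserving care is the initial reduction — recognizing that for a point submanifold the conormal wavefront conclusion is purely a singular-support statement — after which the argument is just the standard ``symbol $\Rightarrow$ smooth off the diagonal'' bookkeeping: differentiation in $\xi$ gains decay on symbols, and an $L^1$ function has continuous Fourier transform. (One could equally well phrase the proof by testing against cutoffs localized away from $0$ and integrating by parts, but the polynomial-weight argument above is cleanest.)
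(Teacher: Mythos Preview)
Your argument is correct and follows essentially the same underlying idea as the paper --- exchanging multiplication by $x$ for differentiation in $\xi$, and using that $\xi$-derivatives improve symbol order. The execution differs slightly: the paper applies the vector fields $x^iD_{x^j}$ and observes that each such application preserves membership in a \emph{fixed} Sobolev space $H^{-m-k/2-\ep}$, since $D_{\xi_i}(\xi_j a)$ remains in $S^m_\cl$; iterating gives the ``iterated regularity'' estimate $(x_{i_1}D_{x_{j_1}})\cdots(x_{i_l}D_{x_{j_l}})\mathcal{F}^{-1}(a)\in H^{-m-k/2-\ep}$ for all $l$, from which smoothness off $0$ follows. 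Your version instead uses $x^\alpha$ and $D_x^\beta$ separately, pushing the symbol order down far enough to land in $L^1$ and hence get continuity. Both reach the same conclusion; the paper's phrasing has the advantage of simultaneously establishing the conormal iterated-regularity property (stability under vector fields tangent to $\{0\}$), which it immediately exploits in the subsequent proposition, whereas yours is a bit more direct if smoothness away from the origin is all one wants.
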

\begin{proof}
Writing
$$
\mathcal{F}^{-1}(a)(x) = (2\pi)^{-k/2} \int a(\xi)e^{i\xi\cdot x} \,
d\xi,
$$
we first note that
$$
\mathcal{F}^{-1}(a)(x) \in H^{-m-k/2-\ep}(\RR^k)
$$
for any $a \in S^m_\cl$ and for all $\ep>0.$  Moreover for all $j,$
\begin{align*}
(x^i D_{x^j}) \mathcal{F}^{-1}(a)(x) &= (2\pi)^{-k/2} \int a(\xi)(x^i D_{x^j})e^{i\xi\cdot x} \,
dx\\  &= (2\pi)^{-k/2} \int x^i \xi_j a(\xi)e^{i\xi\cdot x} \, d\xi \\  
&= (2\pi)^{-k/2} \int  \xi_j a(\xi) D_{\xi_i} e^{i\xi\cdot x} \, d\xi\\
&= -(2\pi)^{-k/2} \int  D_{\xi_i} (\xi_j a(\xi)) e^{i\xi\cdot x} \, d\xi,
\end{align*}
where we have integrated by parts in the final line.
Note that if $a \in S^m_\cl$ then $D_{\xi_i} (\xi_j a(\xi)) \in S^m_\cl$
too (cf.\ Exercise~\ref{KN}).  Thus we also have
$$
(x^i D_{x^j}) \mathcal{F}^{-1}(a)(x) \in H^{-m-k/2-\ep}(\RR^k).
$$
Iterating this argument gives
\begin{equation}\label{itreg}
(x_{i_1} D_{x_{j_1}}) \dots (x_{i_l} D_{x_{j_l}})
    \mathcal{F}^{-1}(a)(x) \in H^{-m-k/2-\ep}(\RR^k).
\end{equation}
for all choices of indices and all $l \in \NN.$ Thus
$\mathcal{F}^{-1}a$
is smooth\footnote{We are of course proving more than the lemma
  states here: \eqref{itreg} gives a more precise ``conormality''
  estimate that is valid uniformly across the origin.}
  away from $x=0.$
\end{proof}

By the same token, we have more generally,
\begin{proposition}
Let $\rho_1,\dots, \rho_k$ be (local) defining functions for $Y
\subset X$ and let \begin{equation}\label{conormalsymbol} a \in S^{m+(n-2k)/4}_\cl(\RR^n_x \times \RR^k_\xi)\end{equation} be
compactly supported in $x.$  Then
\begin{equation}\label{conormaldist}
u(x) = (2\pi)^{-(n+2k)/4}
\int_{\RR^k} a(x,\theta) e^{i (\rho_1 \theta_1+\dots +\rho_k \theta_k)} \, d\theta
\end{equation}
has wavefront set contained in $N^* Y.$  Moreover there exists $s \in
\RR$ such that if $V_1,\dots V_l$
are vector fields tangent to $Y,$ then
$$
V_1\dots V_l u \in H^s.
$$
\end{proposition}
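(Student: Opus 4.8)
The plan is to reduce everything to a model computation in adapted coordinates and then essentially reread the proof of the Lemma above, keeping careful track of orders. Since $\WF u$ and $N^*Y$ are coordinate-invariant and $u$ is compactly supported, I would pass to local coordinates $(x_1,\dots,x_k,y_1,\dots,y_{n-k})$ with $x_i=\rho_i$, exactly as in the discussion preceding the statement, so that $Y=\{x=0\}$, $N^*Y=\{(0,y;\xi,0):\xi\neq 0\}$, and
$$
u(x,y)=c_{n,k}\int_{\RR^k} a(x,y,\theta)\,e^{ix\cdot\theta}\,d\theta,\qquad a\in S^{\mu}_{\cl},\quad \mu:=m+(n-2k)/4,
$$
with $a$ smooth and compactly supported in $(x,y)$. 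Thus $u$ is a ``symbol-valued partial inverse Fourier transform,'' and the two assertions are about its Sobolev size, its wavefront set, and its stability under the module of vector fields tangent to $Y$.

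\textbf{Step 1 (a Fourier-side bound).} I would take the full Fourier transform in $(x,y)$, obtaining $\hat u(\xi,\eta)=c'_{n,k}\int \hat a(\xi-\theta,\eta,\theta)\,d\theta$, where $\hat a$ is the Fourier transform of $a$ in its base variables only. Since $a(\cdot,\cdot,\theta)$ is, for each $\theta$, supported in a fixed compact set with all base-derivatives $O(\ang{\theta}^{\mu})$, integration by parts in $(x,y)$ gives $\abs{\hat a(\zeta,\eta,\theta)}\le C_N\ang{(\zeta,\eta)}^{-N}\ang{\theta}^{\mu}$ for every $N$. Inserting this into the $\theta$-integral and using Peetre's inequality to trade $\ang{\theta}^{\mu}$ for $\ang{\xi}^{\mu}\ang{\xi-\theta}^{\abs{\mu}}$ yields
$$
\abs{\hat u(\xi,\eta)}\le C_N\,\ang{\eta}^{-N}\ang{\xi}^{\mu},\qquad N\in\NN.
$$
A routine integration then gives $u\in H^s(\RR^n)$ for all $s$ with $s<-\mu-k/2=-m-n/4$ (taking $s\le 0$, so $\ang{(\xi,\eta)}^{2s}\le\ang{\xi}^{2s}$): the factor $\ang{\eta}^{-N}$ handles the $\eta$-integral and $\ang{\xi}^{2\mu+2s}$ the $\xi$-integral. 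I would fix once and for all such an $s_0$, e.g.\ $s_0=\min(-1,-m-n/4-1)$.

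\textbf{Step 2 (wavefront set).} In any closed cone $\{\abs{\eta}\ge\ep\abs{\xi}\}$ the bound of Step 1 becomes $\abs{\hat u(\xi,\eta)}\le C_N\ang{(\xi,\eta)}^{\mu-N}$, i.e.\ $\hat u$ is rapidly decreasing there; since $u$ is compactly supported this forces $(z;\xi_0,\eta_0)\notin\WF u$ whenever $\eta_0\neq 0$. Separately, $u$ is smooth off $Y$: this is precisely the argument of the Lemma, namely that on $\{\abs{x}\ge\delta>0\}$ the operator $L=\abs{x}^{-2}\sum_j x_j D_{\theta_j}$ preserves $e^{ix\cdot\theta}$, and repeated integration by parts in $\theta$ moves arbitrary powers of $\ang{\theta}^{-1}$ onto $a$, making the integral and all of its $(x,y)$-derivatives converge absolutely there; hence $\singsupp u\subseteq\{x=0\}$. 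Combining the two facts, $\WF u\subseteq\{x=0,\ \eta=0\}=N^*Y$.

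\textbf{Step 3 (conormal regularity).} By Hadamard's lemma the $\CI(X)$-module of vector fields tangent to $Y$ is generated by $\partial_{y^j}$ $(1\le j\le n-k)$ and $x_i\partial_{x^\ell}$ $(1\le i,\ell\le k)$. The key observation is that each generator sends an expression $\int b(x,y,\theta)e^{ix\cdot\theta}\,d\theta$ with $b\in S^{\mu}_{\cl}$ compactly supported in $(x,y)$ to another such expression of the \emph{same} symbol order: $\partial_{y^j}$ merely differentiates $b$ in $y$; for $x_i\partial_{x^\ell}$ one writes $x_i e^{ix\cdot\theta}=D_{\theta_i}e^{ix\cdot\theta}$ and integrates by parts in $\theta$, so that when $\partial_{x^\ell}$ falls on $b$ one gets $-D_{\theta_i}(\partial_{x^\ell}b)\,e^{ix\cdot\theta}$ (a symbol of order $\mu-1$) and when it falls on the phase one gets $-iD_{\theta_i}(\theta_\ell b)\,e^{ix\cdot\theta}$ (a symbol of order $\mu$); multiplying by the smooth coefficients of the $V_p$ preserves compactness of support since $b$ already has it. Iterating, $V_1\cdots V_l u$ is again of the form \eqref{conormaldist} with a symbol in $S^{\mu}_{\cl}$ compactly supported in $x$, hence $V_1\cdots V_l u\in H^{s_0}$ by Step 1, with the \emph{same} $s_0$ for every $l$ and every choice of tangent $V_p$. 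The only real content — and the main thing to watch in the bookkeeping above — is exactly this uniformity: that the symbol order never exceeds $\mu$ under repeated differentiation along directions tangent to $Y$, so the Sobolev order does not drift with $l$.
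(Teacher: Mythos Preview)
Your proof is correct and follows precisely the route the paper has in mind: the paper leaves this proposition as an exercise, with hints to pass to adapted coordinates $(x,y)$ with $x_i=\rho_i$, to write tangent vector fields as $\CI$-combinations of $\partial_{y^j}$ and $x^i\partial_{x^j}$, and then to rerun the integration-by-parts argument of the preceding Lemma. Your Steps~1--3 carry this out cleanly, and your identification of the allowable Sobolev range $s<-\mu-k/2=-m-n/4$ answers the exercise's follow-up question as well.
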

\begin{exercise}
Prove the proposition.  You will probably find it helpful to change to
a coordinate system $(x_1,\dots,x_k, y_1,\dots,y_{n-k})$ in which $x_1,\dots,x_k=\rho_1,\dots,\rho_k.$
Note that in this coordinate system, any vector field tangent to $Y$
can be written
$$
\sum a_{ij}(x,y) x^i \pa_{x^j}+ \sum b_j(x,y) \pa_{y^j}.
$$

What values of $s,$ the Sobolev exponent in the proposition, are allowable?
\end{exercise}

\begin{definition}
  A distribution $u \in \mathcal{D}'(X)$ is a \emph{conormal
    distribution} with respect to $Y,$ of order $m,$ if it can
  (locally) be written in the form \eqref{conormaldist} with symbol as
  in \eqref{conormalsymbol}.
\end{definition}
While it may appear that the definition of conormal distributions depends on the choice of the
defining functions $\rho_j,$ this is in fact not the case.  The rather
peculiar-looking convention on the orders of distributions is not
supposed to make much sense just yet.

Note that examples of conormal distributions include $\delta(x) \in
\RR^n$ (conormal with respect to the origin), and more generally,
delta distributions along submanifolds.  Also quite pertinent is the
example of pseudodifferential operators: if $A=\Opl(a)\in \Psi^m(X)$
then the Schwartz kernel of $A$ is a conormal distribution with
respect to the diagonal in $X\times X,$ of order $m.$ (This goes at
least some of the way to explaining the convention on orders.)
Indeed, we could (at some pedagogical cost) simply have introduced
conormal distributions and then used the notion to define the Schwartz
kernels of pseudodifferential operators in the first place.

\subsection{Lagrangian distributions}

We now introduce a powerful generalization of conormal distributions,
the class of \emph{Lagrangian distributions.}\footnote{These were
  first studied by H\"ormander \cite{Hormander:FIO1}.}  We begin by introducing
some underlying geometric notions.

An important notion from symplectic geometry is that of a
\emph{Lagrangian submanifold} $\lag$ of a symplectic manifold
$N^{2n}.$ This is a submanifold of dimension $n$ on which the symplectic
form vanishes.  We can always find local coordinates in which the
symplectic form is given by $\omega=\sum dx^i \wedge dy^i$ and
$\lag=\{y=0\},$ so there are no interesting local invariants of
Lagrangian manifolds.

A \emph{conic Lagrangian manifold} in $T^*X$ is a Lagrangian
submanifold of $T^*X\backslash o$ that is invariant under the $\RR^+$
action on the fibers.  (Here, $o$ denotes the zero-section.)

Among the most important examples of conic Lagrangians are the
following: let $Y\subset X$ be any submanifold; then $N^*Y \subset
T^*X$ is a conic Lagrangian.
\begin{exercise}
Verify this.
\end{exercise}

The trick to defining Lagrangian distributions is to figure out how to
associate a \emph{phase function} $\phi$ with a conic Lagrangian
$\lag$ in $T^*X.$
\begin{definition}
A nondegenerate phase function is a smooth function $\phi(x,\theta),$
locally defined on a coordinate neighborhood of $X\times\RR^k,$
such that $\phi$ is homogeneous of degree $1$ in $\theta$ and
such that the differentials $d(\partial{\phi}/\partial \theta_j)$ are
  linearly independent on the set
$$
C=\left\{(x,\theta):\frac{\partial{\phi}}{\partial \theta_j}=0 \text{ for all }
  j=1,\dots, k\right\}.
$$
The phase function is said to locally \emph{parametrize} the conic Lagrangian $\lag$
if
$$
C \ni (x,\theta) \mapsto (x,d_x\phi)
$$
is a local diffeomorphism from $C$ to $\lag.$
\end{definition}
\begin{exercise}\
\begin{enumerate}
\item Show that, in the notation of the definition above, $C$ is
  automatically a manifold, and the map $C \ni (x,\theta) \mapsto (x,d_x\phi)$ is
  automatically a local diffeomorphism from $C$ to its image, which is
  a conic Lagrangian.
\item
Show that if $\rho_j$ are definining functions for $Y\subset X$ then
$$
\phi=\sum \rho_j \theta_j
$$
is a nondegenerate parametrization of $N^*Y.$
\item
What Lagrangian is parametrized by the phase function used in our
parametrix for the half-wave operator in the Euclidean case, given by
$$
\phi(t,x,y,\theta) = (x-y)\cdot \theta-t\abs{\theta}?
$$
\end{enumerate}
\end{exercise}

It turns out that every conic Lagrangian manifold has a local
parametrization; the trouble is, in fact, that it has lots of them.

\begin{definition}
A \emph{Lagrangian distribution} of order $m$ with
respect to the Lagrangian $\lag$ as one that is given,
locally near any point in $X,$ by a finite sum of oscillatory
integrals of the form
$$
(2\pi)^{-(n+2k)/4} \int_{\RR^k} a(x,\theta) e^{i\phi(x,\theta)} \, d\theta
$$
where
$$
a \in S_\cl^{m+(n-2k)/4}(\RR^n_x \times\RR^k_\theta)
$$
and where $\phi$ is a nondegenerate phase function parametrizing
$\lag.$  Let $I^m(X,\lag)$ denote the space of all Lagrangian
distributions on $X$ with respect to $\lag$ of order $m.$
\end{definition}

Note that the connection between $k,$ the number of phase variables,
and the geometry of $\lag$ is not obvious; indeed, it turns out that
we have some choice in how many phase variables to use.  As there are
many different ways to parametrize a given conic Lagrangian manifold,
one tricky aspect of the theory of Lagrangian distributions
is necessarily the proof that using different parametrizations
(possibly involving different numbers of phase variables) gives
us the same class of distributions.

The analogue of the \emph{iterated regularity} property of conormal
distributions, i.e.\ our ability to repeatedly differentiate along
vector fields tangent to $Y,$ turns out to be as follows:
\begin{proposition}
Let $u \in I^m(X,\lag).$  There exists $s$ such that for any $l \in \NN$ and for any $A_1,\dots,
A_l \in \Psi^1(X)$ with $\sigma_1(A_j)|_{\lag}=0,$ we have
$$
A_1 \dots A_l u \in H^s(X).
$$
\end{proposition}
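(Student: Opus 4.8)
The plan is to reduce to the conormal case, where the iterated-regularity estimate was already established in the previous subsection, by exploiting the fact that the symbol condition $\sigma_1(A_j)|_{\lag}=0$ makes $A_j$ act like a vector field ``tangent to $\lag$.'' First I would localize: a Lagrangian distribution is, near any point, a finite sum of oscillatory integrals $u_\nu=(2\pi)^{-(n+2k)/4}\int a_\nu(x,\theta)e^{i\phi_\nu(x,\theta)}\,d\theta$ with nondegenerate phases $\phi_\nu$ parametrizing $\lag$, so it suffices to treat one such $u_\nu$; and since $H^s$ is unaffected by the localization (Properties~\eqref{property:alg}, \eqref{property:boundedness}), I may assume $u$ itself is a single oscillatory integral with compactly supported amplitude. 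The Sobolev exponent $s$ is then read off from the symbol order exactly as in the conormal case: $a\in S^{m+(n-2k)/4}_\cl(\RR^n_x\times\RR^k_\theta)$ gives $u\in H^s$ with $s=-m-k/2-\ep$ (or whatever the sharp value is, computed as in the proposition on conormal distributions), uniformly.

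The key step is a structural lemma: if $A\in\Psi^1(X)$ with $\sigma_1(A)|_{\lag}=0$, then $Au$ is again a Lagrangian distribution in $I^m(X,\lag)$ — crucially, of the \emph{same} order $m$, not order $m+1$. To see this, write out $Au$ as an oscillatory integral using the quantization of $A$; composing the amplitude of $A$ with $e^{i\phi}$ and applying stationary-phase / non-stationary-phase reduction along the fibers of $C\to\lag$, the leading term in $\theta$ of the new amplitude is, up to lower-order corrections, $\sigma_1(A)(x,d_x\phi)$ times the old amplitude. Since $(x,d_x\phi)$ ranges over $\lag$ as $(x,\theta)$ ranges over $C$, and $\sigma_1(A)$ vanishes there, this leading term vanishes on $C$; hence it is divisible by the functions $\partial\phi/\partial\theta_j$ (which cut out $C$ nondegenerately), and an integration by parts in $\theta$ — each $\partial\phi/\partial\theta_j$ being $\partial_{\theta_j}$ applied to $e^{i\phi}$ — converts that factor into a derivative hitting the remaining amplitude, lowering the $\theta$-order by one and thereby compensating exactly for the order $1$ of $A$. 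The outcome is a new symbol still in $S^{m+(n-2k)/4}_\cl$ with the same phase, i.e.\ $Au\in I^m(X,\lag)$.

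Granting the lemma, the proposition follows by induction on $l$: $A_1\cdots A_l u = A_1(A_2\cdots A_l u)$, and $A_2\cdots A_l u\in I^m(X,\lag)$ by the inductive hypothesis, so one more application of the lemma (using $\sigma_1(A_1)|_\lag=0$) keeps us in $I^m(X,\lag)$; then the base Sobolev estimate gives $A_1\cdots A_l u\in H^s$ with $s$ independent of $l$. The main obstacle is the lemma, and within it the genuinely delicate point is making the ``divisibility plus integration by parts'' argument work cleanly when $A$ is a pseudodifferential operator rather than a differential one: one must control the full symbol of $A$ (not just the principal part) and verify that the subprincipal and lower-order contributions, together with the error terms from reducing $A\circ e^{i\phi}$ to a single oscillatory integral in the phase $\phi$, all land in $S^{m+(n-2k)/4}_\cl$ as well. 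This is really the same bookkeeping that underlies the proof that different parametrizations give the same class $I^m(X,\lag)$, and in the same spirit as the calculus computations we have elsewhere placed inside the black box; I would state it carefully and refer to \cite{Hormander:FIO1} for the symbol-level details.
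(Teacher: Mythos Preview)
The paper does not actually prove this proposition: it states the result as ``the analogue of the iterated regularity property of conormal distributions'' and then moves on, remarking only that the property characterizes Lagrangian distributions (with Kohn--Nirenberg symbols) and referring the reader to \cite{Hormander:v4}. So there is no argument in the paper to compare against.

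That said, your outline is correct and is the standard route. A few comments. First, your key lemma---that $A\in\Psi^1(X)$ with $\sigma_1(A)|_{\lag}=0$ sends $I^m(X,\lag)$ to itself---is precisely the pseudodifferential extension of what the paper later records as Property~\eqref{fioproperty:commutator} of the FIO calculus (stated there only for $P\in\Diff^m$); since that property is itself presented without proof and placed inside the black box, invoking it here is consistent with the level of rigor in the notes. Second, your divisibility-plus-integration-by-parts argument is the right mechanism: $\sigma_1(A)(x,d_x\phi)$ is homogeneous of degree one in $\theta$ and vanishes on $C=\{d_\theta\phi=0\}$, hence factors through the degree-zero defining functions $\partial\phi/\partial\theta_j$, and one integration by parts in $\theta$ recovers the lost order. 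The honest gap you flag---controlling the full symbol of $A$ when reducing $A(ae^{i\phi})$ to a single oscillatory integral with phase $\phi$---is exactly the content of the asymptotic expansion in \cite{Hormander:FIO1}, and your deferral to that reference is appropriate. Finally, the base Sobolev estimate $I^m(X,\lag)\subset H^s$ for suitable $s$ does require an argument beyond the conormal case (since the phase is no longer linear in $\theta$), but it follows from the same stationary/non-stationary phase dichotomy, or more cheaply from the $L^2$-boundedness in Property~\eqref{fioproperty:boundedness} once one views $u$ as an FIO applied to a fixed distribution.
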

Of course, once this holds for one $s,$ it holds for all smaller
values; the precise range of possible values of $s$ is related to the
order $m$ of the Lagrangian distribution; we will not pursue this
relationship here, however.  This iterated regularity property of Lagrangian
distributions completely characterizes them if we use ``Kohn-Nirenberg''
symbols (as in Exercise~\ref{KN}) instead of ``classical'' ones 
(see \cite{Hormander:v4}).

\subsection{Fourier integral operators}

Fourier integral operators (``FIO's'') quantize classical \emph{maps}
from a phase space to itself just as pseudodifferential operators quantize
classical \emph{observables} (i.e.\ functions on the phase space).
The maps from phase space to itself that we may quantize in this
manner are the \emph{symplectomorphisms}, exactly the class of
transformations of phase space that arise in classical mechanics.  We
recall that a symplectomorphism between symplectic manifolds is a
diffeomorphism that preserves the symplectic form.  We further define
a \emph{homogeneous symplectomorphism} from $T^*X$ to $T^*X$ to be one
that is homogeneous in the fiber variables, i.e.\ commutes with the
$\RR^+$ action on the fibers.

An important class of homogeneous symplectomorphisms is those
obtained as follows:
\begin{exercise}
Show that the time-$1$ flowout of the Hamilton vector field of a
homogeneous function of degree $1$ on $T^*X$ is a homogeneous symplectomorphism.
\end{exercise}

Given a homogeneous symplectomorphism $\Phi:T^*X \to T^*X,$ consider its \emph{graph}
$\Gamma_\Phi\subset (T^*X \backslash o)\times (T^*X \backslash o).$
Since $\Phi$ is a symplectomorphism, we have
$$
\iota^* \pi_L^* \omega=\iota^* \pi_R^* \omega,
$$
where $\iota$ is inclusion of $\Gamma_\Phi$ in $(T^*X \backslash
o)\times (T^*X \backslash o),$ and $\pi_\bullet$ are the left and
right projections.  If we alter $\Gamma_\Phi$ slightly, forming
$$
\Gamma'_\Phi = \{(x_1,\xi_1, x_2,\xi_2): (x_1,\xi_1, x_2,-\xi_2) \in \Gamma_\Phi\},
$$
and let $\iota'$ denote the inclusion of this manifold, then we find
that a sign is flipped, and
$$
(\iota')^* \pi_L^* \omega+(\iota')^* \pi_R^* \omega=0;
$$
since $\Omega = ( \pi_L^* \omega+\pi_R^* \omega)$ is just
the symplectic form on 
$$
T^*(X\times X) =T^*X \times T^*X,
$$
we thus find that 
$\Gamma'_\Phi$ is \emph{Lagrangian} in $T^*(X\times X).$  In fact, it is easily to verify
that given a diffeomorphism $\Phi,$ $\Gamma'_\Phi$ is Lagrangian if and only
if $\Phi$ is a symplectomorphism.
\begin{exercise}
Check this.
\end{exercise}

Now we simply define the class of Fourier integral operators (of order
$m$) associated with
the symplectomorphism $\Phi$ of $X$ to be those operators
from smooth functions to distributions whose Schwartz kernels lie in
the Lagrangian distributions
$$
I^m(X\times X, \Gamma'_\Phi).
$$
It would be nice if this class of operators turned out to have good
properties such as behaving well under composition, as
pseudodifferential operators certainly do.  We note right off the bat
that these operators \emph{include} pseudodifferential operators, as well as
a number of other, familiar examples:

\begin{enumerate}
\item $\Psi^m(X) = I^m(X\times X, \Gamma'_{\Id}).$
\item In $\RR^n,$ fix $\alpha$ and let $Tf(x)=f(x-\alpha)$  Then $T$
  has Schwartz kernel
$$
\delta(x-x'-\alpha)
$$
which is clearly conormal of order zero at $x-x'-\alpha=0.$  Note that
this is certainly \emph{not} a pseudodifferential operator, as it moves
wavefront around; indeed, it is associated with the symplectomorphism
$\Phi(x,\xi)=(x+\alpha,\xi),$ and it it no coincidence that
$$
\WF Tf = \Phi(\WF f).
$$
\item
As a generalization of the previous example, note that if $\phi: X\to
X$ is a diffeomorphism, then we may set
$$
Tf(x)=f(\phi(x));
$$
this is a FIO associated to the homogeneous symplectomorphism
$$
\Phi(x,\xi) =(\phi^{-1}(x), \phi^*_{\phi^{-1}(x)}(\xi))
$$
induced by $\phi$ on $T^*X.$
\end{enumerate}
\begin{exercise}
Work out this last example carefully.
\end{exercise}

Now it turns out to be helpful to actually consider a broader class
of FIO's than we have described so far.  Instead of just using
Lagrangian submanifolds of $T^*(X\times X)$ given by
$\Gamma'=\Gamma'_\Phi$ where $\Phi$ is a symplectomorphism, we just
require that $\Gamma'$ be a reasonable Lagrangian (and we allow
operators between different manifolds while we are at it):
\begin{definition}
Let $X,Y$ be two manifolds (not necessarily of the same dimension).  A
\emph{homogeneous canonical relation} from $T^*Y$ to $T^*X$ is a
homogeneous submanifold $\Gamma$ of $(T^*X\backslash o)
\times(T^*Y\backslash o),$ closed in $T^*(X\times Y) \backslash o$ such that
$$
\Gamma' \equiv \{(x,\xi,y,\eta):(x,\xi,y,-\eta)\in \Gamma\}
$$
is Lagrangian in $T^*(X\times Y).$
\end{definition}
We can view $\Gamma$ as giving a multivalued generalization of a
symplectomorphism, with
$$
\Gamma(y,\eta) \equiv \{(x,\xi): (x,\xi,y,\eta) \in \Gamma\}.
$$
and, more generally, if $S \subset T^*Y$ is conic,
\begin{equation}\label{canrel}
\Gamma(S) \equiv \{(x,\xi): \text{ there exists } (y,\eta) \in S,
\text{ with }(x,\xi,y,\eta) \in \Gamma\}.
\end{equation}

\begin{definition}
A Fourier integral operator of order $m$ associated to a homogeneous
canonical relation $\Gamma$ is an
operator from $\CcI(Y)$ to $\mathcal{D}'(X)$ with Schwartz kernel in
$$
I^m(X\times Y, \Gamma').
$$
\end{definition}

\begin{exercise}
Show that a homogeneous canonical relation $\Gamma$ is associated to a
symplectomorphism if and only if its projections onto both factors
$T^*X$ and $T^*Y$ are diffeomorphisms.
\end{exercise}

\begin{exercise}\label{exercise:restrictionfio}\
\begin{enumerate}
\item
Let $Y \subset X$ be a submanifold.  Show that the operation of
restriction of a smooth function on $X$ to $Y$ is an FIO.
\item Endow $X$ with a metric, and consider the volume form $dg_Y$ on
  $Y$ arising from the restriction of this metric; show that the map
  taking a function $f$ on $Y$ to the distribution $\phi \mapsto
  \int_Y \phi\rvert_Y (y) f(y) dg_Y$ is an FIO.  (Think of it as just
  multiplying $f$ by the delta-distribution along $Y,$ which makes
  sense if we choose a metric.)  What is the relationship between the
  restriction FIO and this one, which you might think of as an
  extension map?
\end{enumerate}
\end{exercise}

In the special case that $\Gamma$ is a canonical relation that is
locally the graph of a symplectomorphism, we say it is a \emph{local
  canonical graph}.

We now briefly enumerate the properties of the FIO calculus, somewhat
in parallel with our discussion of pseudodifferential operators.
These theorems are considerably deeper, however.  In preparation for
our discussion of composition, suppose that
\begin{align*}
\Gamma_1\subset T^*X\backslash o &\times T^*Y\backslash o,\\
\Gamma_2\subset T^*Y\backslash o &\times T^*Z\backslash o
\end{align*}
are homogeneous canonical relations.  We say that $\Gamma_1$ and $\Gamma_2$ are
\emph{transverse} if the manifolds
$$
\Gamma_1\times \Gamma_2\  \text{ and } T^*X \times \Delta_{T^*Y} \times T^*Z
$$
intersect transversely in $T^*X\times T^*Y \times T^*Y \times T^*Z;$
here $\Delta_{T^*Y}$ denotes the diagonal submanifold.
\begin{exercise}
Show that if either $\Gamma_1$ or $\Gamma_2$ is the graph of
a symplectomorphism, then $\Gamma_1$ and $\Gamma_2$ are transverse.

\end{exercise}

In what follows, we will as usual assume for simplicity that all manifolds are
compact.\footnote{In the absence of this assumption, we need as usual to add
  various hypotheses of properness.}  In the following list of
properties, some are special to FIO's, that is to say, Lagrangian
distributions on product manifolds, viewed as operators; others are
more generally properties of Lagrangian distributions per se, hence
their statements do not necessarily involve products of manifolds.  In
the interests of brevity, we focus on the deeper properties, and omit
trivialities such as associativity of composition.
Note also that for brevity we will systematically confuse operators with
their Schwartz kernels.
\renewcommand{\theenumi}{\Roman{enumi}}
\begin{enumerate}
\item\label{fioproperty:alg} (Algebra property) If $S \in I^m(X\times
  Y, \Gamma'_1)$ and $T \in I^{m'} (Y \times Z, \Gamma'_2)$ and
  $\Gamma_1$ and $\Gamma_2$ are transverse, then
$$
S\circ T \in I^{m+m'} (X \times Z, (\Gamma_1 \circ \Gamma_2)'),
$$
where
\begin{multline}\label{relations}
\Gamma_1 \circ \Gamma_2 = \{(x,\xi,z,\zeta): (x,\xi,y,\eta) \in
\Gamma_1 \\ \text{ and } (y,\eta,z,\zeta) \in \Gamma_2 \text{ for some } (y,\eta)\}.
\end{multline}
Moreover,
$$
S^* \in I^m(Y \times X, (\Gamma^{-1})')
$$
where $\Gamma^{-1}$ is obtained from $\Gamma$ by switching factors.
\item\label{fioproperty:smoothing} (Characterization of smoothing
  operators) The distributions
  in $I^{-\infty}(X,\lag)$ are exactly those in $\CI(X);$ composition
  of an operator $S \in I^m(X\times Y, \Gamma')$ on either side with a
  smoothing operator (i.e.\ one with smooth Schwartz kernel) yields a smoothing operator.
\item\label{fioproperty:symb} (Principal symbol homomorphism) There is family
of linear ``principal symbol maps'' \begin{equation}\label{fiosymbol}\sigma_m:
  I^m(X, \lag) \to \frac{S_\cl^{m+(\dim X)/4}(\lag; L)}{S_\cl^{m-1+(\dim X)/4}(\lag; L)}.\end{equation}
  Here $L$ is a
certain canonically defined line bundle on $\lag$ (see the commentary below),
and $S_\cl^m(\lag; L)$ denotes $L$-valued symbols.
We may identify the quotient space in \eqref{fiosymbol} with
$$
\CI(S^* \lag; L),
$$
and we call the resulting map $\hsigma_m$ instead.
If $S,$
  $T,$ are as in \eqref{fioproperty:alg}, with canonical relations
  $\Gamma_1,\Gamma_2$ intersecting transversely,
$$
\sigma_{m+m'}(ST) = \sigma_m(S) \sigma_{m'}(T)
$$
and
$$
\sigma_m(A^*) = s^*\overline{\sigma_m(A)},
$$
where $s$ is the map interchanging the two factors.
The product of the symbols, at $(x,\xi,z,\zeta) \in \Gamma_1\circ
\Gamma_2,$ is defined as
$$
\sigma_m(S)(x,\xi,y,\eta)\cdot  \sigma_{m'}(T)(y,\eta,z,\zeta)
$$
evaluated at (the unique) $(y,\eta)$ such that
$(x,\xi,y,\eta) \in \Gamma_1,$ $(y,\eta,z,\zeta)\in \Gamma_2.$
\item\label{fioproperty:exact} (Symbol exact sequence) There is a short exact sequence
$$
0 \to I^{m-1}(X, \lag)  \to I^m(X, \lag)
\stackrel{\hsigma_m}{\to} \CI(S^* \lag; L) \to 0.
$$
hence the symbol is $0$ if and only if an operator is of lower order.
\item\label{fioproperty:quant}
Given $\lag,$ there is a linear ``quantization map'' $$\Op: S^{m+(\dim X)/4}_\cl(\lag;L) \to
I^m(X, \lag)$$ such that
if $$a \sim \sum_{j=0}^\infty a_{m+(\dim X)/4-j}(x,\hat\xi)
\abs{\xi}^{m+(\dim X)/4-j} \in
S^{m+(\dim X)/4}_\cl(\lag;L)$$ then
$$
\sigma_m(\Op(a)) = a_{m+(\dim X)/4}(x,\hat\xi).
$$  The map $\Op$ is onto, modulo $\CI(X).$
\item\label{fioproperty:commutator} (Product with vanishing principal symbol)
If $P\in \Diff^m (X)$ is self-adjoint and $u\in I^{m'} (X, \lag),$ with $\lag
\subset\Sigma_P\equiv \{\sigma_m(P)=0\},$ then
$$
P u \in I^{m+m'-1}(X,\lag)
$$
and
$$
\sigma_{m+m'-1} (Pu) = i^{-1} \hamvf_p (\sigma_{m'}(u)),
$$
with $\hamvf_p$ denoting the Hamilton vector field.
\item\label{fioproperty:boundedness} ($L^2$-boundedness, compactness)
If $T\in I^m(X\times Y, \Gamma)$ is associated to a local canonical
graph, then
$$
T \in \mathcal{L}(H^s(Y), H^{s-m}(X)) \text{ for all }s\in \RR.
$$
Negative-order operators of this type acting on $L^2(X)$ are thus compact.
\item\label{fioproperty:summation} (Asymptotic summation)
Given $u_j \in I^{m-j}(X,\lag),$ with $j \in \NN,$ there exists $u \in
I^m(X,\lag)$ such that
$$
u\sim \sum_j u_j,
$$
which means that
$$
u - \sum_{j=0}^N u_j \in I^{m-N-1}(X,\lag)
$$
for each $N.$

\item\label{fioproperty:microsupport} (Microsupport)
The microsupport of $T \in I^{m} (X\times Y, \Gamma')$ is well
defined as the largest conic subset $\tilde\Gamma\subset \Gamma$
on which the symbol is $O(\abs{\xi}^{-\infty}).$  We have
$$
\WF T u \subseteq \tilde \Gamma  (\WF u)
$$
for any distribution $u$ on $Y,$ where the action of $\tilde \Gamma$
on $\WF u$ is given by \eqref{canrel}. Furthermore,
$$
\WF' (S\circ T) \subseteq \WF'S \circ \WF' T.
$$
\end{enumerate}

\noindent\textsc{Commentary:}

\begin{enumerate}
\item[\eqref{fioproperty:alg}] This is a major result.  Since FIO's
  include pseudodifferential operators, this includes the composition
  property for pseudodifferential operators as a special case.
  Another special case, when $Z$ a point, yields the statement that an
  FIO applied to a Lagrangian distribution on the manifold $Y$ with
  respect to the Lagrangian $\lag\subset T^*Y $ is a Lagrangian
  distribution associated to $\Gamma(\lag),$ where $\Gamma$ is the
  canonical relation of the FIO and $\Gamma(\lag)$ is defined by
  \eqref{canrel}.

  One remarkable corollary of this result is as follows: As will be
  discussed below, what our parametrix construction in
  \S\ref{section:waveparametrix} really showed was that for $t$
  sufficiently small, and fixed, we have
$$
e^{-it\sqrt{\Lap}} \in I^0(X \times X, \lag_t)
$$
where $\lag_t$ is the backwards geodesic flowout, for time $t,$ in the left
factor of $N^*\Delta,$ of the conormal bundle to the diagonal in
$T^*(X\times X).$
\begin{Exercise}
Verify this assertion!  (Try this now, but fear not: we will discuss this example further in
\S\ref{section:wavetraceredux} and you can try again then.)
\end{Exercise}
Now $e^{-it\sqrt{\Lap}}$ is a one-parameter group and so the
composition property for FIO's allows us to conclude that in fact
$e^{-it\sqrt{\Lap}}$ is an FIO for \emph{all} times $t,$ associated to
the same flowout described above.  The interesting subtlety is that
while $\lag_t$ is an inward- or outward-pointing conormal bundle for
small positive resp.\ negative time (i.e.\ in the regime where our
parametrix construction worked directly), for $t$ exceeding the
injectivity radius, it ceases to be a conormal bundle, while remaining
a smooth Lagrangian manifold in $T^*(X\times X).$
\item[\eqref{fioproperty:symb}]
Modulo bundle factors, the principal symbol is defined as follows:
if $u \in I^m (X,\lag)$ is given by
$$
u = (2\pi)^{-(n+2k)/4} \int_{\RR^k} a(x,\theta)e^{i\phi(x,\theta)} \, d\theta,
$$
then $\sigma_m(u)$ is defined by first restricting $a(x,\theta)$ to the
manifold
$$
C=\{(x,\theta): d_\theta\phi=0\};
$$
as $\phi$ is a nondegenerate phase function, this manifold is locally
diffeomorphic (via a homogeneous diffeomorphism) to $\lag,$ hence we
may identify $a|_{C}$ with a function on $\lag;$ transferring this
function to $\lag$ via the local diffeomorphism and taking the
top-order homogeneous term in the asymptotic expansion gives the
principal symbol.

Much has been swept under the rug here---for a proper discussion, see,
e.g., \cite{Hormander:FIO1}.  In particular, the line bundle $L$ contains
not just the density factors that we have been studiously ignoring---the
Schwartz kernel of an operator from functions to functions on $X$ is actually
a ``right-density'' on $X \times X,$ i.e.\ a section of the pullback
of the bundle $\abs{\Omega^n(X)}$ in the right factor---but also the
celebrated ``Keller-Maslov index,'' which is related to the
indeterminacy in choosing the phase function parametrizing the
Lagrangian.  We will not enter into a serious discussion of these
issues here.  We have also omitted discussion of the geometry of
composing canonical relations, and the fact that transverse canonical
relations compose to give a new canonical relation, with a unique
point $y,\eta$ such that $(x,\xi,y,\eta) \in \Gamma_1,$
$(y,\eta,z,\zeta) \in \Gamma_2$ whenever $(x,\xi,z,\zeta) \in
\Gamma_1\circ \Gamma_2.$

\item[\eqref{fioproperty:commutator}]
There is a more general version of this statement valid for any $P \in
\Psi^{m}(X)$ characteristic on $\lag,$ but it involves the notion of
subprincipal symbol, which requires some explanation; see
\cite[\S5.2--5.3]{Duistermaat-Hormander:FIO2}.  Moreover, if we are a little more
honest about making this computation work invariantly, so that the
symbol has a density factor in it (one factor in the line bundle $L,$)
then we should really write
$$
\sigma_{m+m'-1} (Pu) = i^{-1} \mathcal{L}_{\hamvf_p} \sigma_{m'}(u),
$$
where $\mathcal{L}_Z$ denotes the Lie derivative along the vector field $Z.$
\item[\eqref{fioproperty:boundedness}]
This is fairly easy to prove, as if $T$ of order $m$ is associated to a
symplectomorphism from $Y$ to $X$, it is easy to check from the previous properties
that $T^*T$ is an FIO associated with the canonical relation given by
the identity map, and hence
$$
T^*T \in \Psi^{2m}(Y),
$$
and we may invoke boundedness results for the pseudodifferential
calculus.  In cases when $T$ is not associated to a local canonical
graph, this argument fails badly (i.e.\ interestingly), and the
optimal mapping properties are a subject of ongoing research.
\end{enumerate}
\renewcommand{\theenumi}{\arabic{enumi}}

Finally, as with the pseudodifferential calculus, we may define a
notion of ellipticity for FIO's, and the above properties imply that
(microlocal) parametrices exist for the inverses of elliptic operators
associated to symplectomorphisms.

\section{The wave trace, redux}\label{section:wavetraceredux}

Let us briefly revisit our construction of the parametrix for the half-wave
equation in the light of the FIO calculus.  Here is what we did, in
hindsight:  we sought a distribution
$$
u \in I^m(\RR\times X \times X, \lag)
$$
for some Lagrangian $\lag,$ and some order $m,$ with
$$
u(0,x,y)=\delta(x-y)
$$
such that
$$
(D_t +  \sqrt{\Lap}_x ) u \in I^{-\infty} ((-\ep,\ep) \times X \times X,\lag)= \CI((-\ep,\ep) \times X \times X).
$$

We begin by sorting out what $m,$ the order of $u,$ should be.
Since
$$
u |_{t=0}=\delta(x-y)=
(2\pi)^{-n} \int_{\RR^n} e^{i(x-y) \cdot \theta} \, d\theta,
$$
we were led us to a solution that for $t$ small was of the form
$$
\int_{\RR^n} a(t,x,y,\theta) e^{i\Phi(t,x,y,\theta)} \, d\theta
$$
with $a$ a symbol of order zero such that $a(0,x,y,\theta)=1,$ and
$\Phi$ a nondegenerate phase function such that
$\Phi(0,x,y,\theta)=(x-y)\cdot \theta.$ This was certainly the rough
form of our earlier Ansatz; it should now be regarded as a Lagrangian
distribution, of course.  Since $\dim (\RR\times X \times X) = 2n+1$
and we have $n$ phase variables $\theta_1,\dots,\theta_n,$ the
convention on orders of FIO's leads to $m=-1/4.$

Now we address the following question: what Lagrangian $\lag$ ought we
to choose?  Since $$\Box_{t,x} \in \Diff^2(\RR\times X \times
X)\subset \Psi^2(\RR\times X \times X),$$ we a priori would have
$$
\Box u  \in I^{7/4}(\RR\times X\times X,\lag);
$$
as we would like smoothness of $\Box u,$ we ought to start by making
the principal symbol of $\Box u$ vanish. The symbol of $\Box$ vanishes only on
$$\Sigma_\Box = \{\tau^2=\abs{\xi}_g^2\}$$
hence the easiest way to ensure vanishing of the principal symbol is
simply to arrange that
\begin{equation}\label{laginchar}
\lag \subset \Sigma_{\Box}.
\end{equation}
Now, recall that our initial conditions were to be
$$
u(0,x,y)=\delta(x-y),
$$
where we may view this as a Lagrangian distribution on $X \times X$ with respect to
$N^*\Delta,$ the conormal to the diagonal:
$$
N^*\Delta =\{(x,y,\xi,\eta): x=y,\xi=-\eta\}.
$$  It is not difficult to
check that the requirement that $u|_{t=0}$ gives this
lower-dimensional Lagrangian\footnote{We really ought to think a bit
  about restriction of Lagrangian distributions here: this is best
  done by regarding the restriction operator itself as an FIO (cf.\
  Exercise~\ref{exercise:restrictionfio}).  We shall omit further
  discussion of this point, but remark that it should at least seem
  plausible that the Lagrangian manifold associated to the restriction
  is the projection (i.e.\ pullback under inclusion), of the Lagrangian
  in the ambient space---cf.\ Exercise~\ref{exercise:WFrestriction}.}
together with the requirement \eqref{laginchar} that $\lag$ should lie
in the characteristic set implies that $\lag\cap\{t=0\}$ should just
consist of points in $\Sigma_{\Box}$ \emph{projecting} to points
in $N^*\Delta,$ i.e.\ that we should in fact have
$$
\lag \cap \{t=0\} = \{(t=0, \tau=-\abs{\eta}_g, x=y,
\xi=-\eta)\}\subset T^*(\RR\times X\times X).
$$
Here we have chosen the sign $\tau=-\abs{\eta}_g$ in view of our real
interest, which is in solving
$$
(D_t +\sqrt\Lap)u=0
$$
rather than the full wave equation;\footnote{We have chosen to
  emphasize this distinction only at this critical juncture only
  because as it is in some respects more pleasant to deal with $\Box$
  than with the half-wave operator when possible.} we have thus kept
$\lag$ inside the characteristic set of $D_t+\sqrt\Lap,$ which is one
of the two components of $\Sigma_\Box.$ 

Let $\lag_0$ now
denote $\lag\cap\{t=0\}.$ The set $\lag_0$ is a manifold on which the
symplectic form vanishes (an ``isotropic'' manifold), of dimension one
less than half the dimension of $T^*(\RR\times X \times X).$
(Exercise: Check this!  Most of the work is done already, as
$N^*(\Delta)$ is Lagrangian in $T^*(X\times X).$)

We now proceed as follows to find a Lagrangian (necessarily one
dimensional larger) containing $\lag_0$: let $\hamvf=\hamvf_\Box$ denote
the Hamilton vector field of the symbol of the wave operator, in the
variables $(t,x,\tau,\xi).$ (I.e., take the Hamilton vector field of
$\Box_{(t,x)}$ on the cotangent bundle of $\RR\times X \times
X$---nothing interesting happens in $y,\eta.$) By construction,
$\lag_0 \subset \Sigma_{\Box};$ we now \emph{define} $\lag$ to be the
union of integral curves of $\hamvf$ passing through points in
$\lag_0.$ More concretely, these are all backwards unit-speed
parametrized geodesics beginning at $(x=y, \xi=-\eta),$ where
$(x,\xi)$ evolves along the geodesic flow, and $(y,\eta)$ are fixed.
(Meanwhile, $t$ is evolving at unit speed, and $\tau$ is constrained
by the requirement that we are in the characteristic set so that
$\tau=-\abs{\xi}_g$.)  The manifold $\lag$ stays inside $\Sigma_\Box$
(indeed, inside the component that is $\Sigma_{D_t +\sqrt\Lap}$)
since $\hamvf$ is tangent to this manifold; moreover, $\lag$ is
automatically Lagrangian since $\omega$ vanishes on $\lag_0$ and
$\sigma_2(\Box)$ does as well, so that for $\mathsf{Y} \in T\lag_0,$ we further
have $$\omega(\mathsf{Y},\hamvf) = (d(\sigma_2(\Box)),\mathsf{Y}) =\mathsf{Y}\sigma_2(\Box) =0.$$
This gives vanishing of $\omega$ on the tangent space to $\lag$ at
points along $t=0;$ to conclude it more generally, just recall that the
flow generated by a Hamilton vector field is a family of symplectomorphisms.
\begin{exercise}
Check that $\lag$ is in fact the \emph{only} connected conic Lagrangian manifold
passing through $\lag_0$ and lying in $\Sigma_{\Box}.$  (\textsc{Hint:}
Observe that $\hamvf$ is in fact the \emph{unique} vector at each point
along $\lag_0$ that has the property $\omega(\mathsf{Y},\hamvf)=0$ for all $\mathsf{Y}
\in T\lag_0.$)
\end{exercise}

Thus, to recapitulate, if we obtain $\lag$ by flowing out $\lag_0$
(the lift of the conormal bundle of the diagonal to the characteristic
set of $D_t+\sqrt{\Lap}$) along $\hamvf,$ the Hamilton vector field of
$\Box,$ we
produce a Lagrangian on which $\Box$ is characteristic.

\begin{exercise}
Show that the phase function $\phi(t,x,\eta)-y\cdot \eta$ that we
constructed explicitly in \S\ref{section:waveparametrix} does indeed
parametrize $$
\lag =\{(t,\tau,x,\xi,y,-\eta): \tau=-\abs{\xi}_g,\
(x,\xi)=\Phi_t(y,\eta)
$$ (with $\Phi_t$ denoting geodesic flow, i.e.\ the flow generated by the Hamilton vector
field of $\abs{\xi}_g$) over $\abs{t}\ll 1.$

Compare our solution to the eikonal equation using Hamilton-Jacobi
theory in Exercise~\ref{exercise:hamjac} to what we have done here.
\end{exercise}

We now remark that while our parametrization of the Lagrangian in
\S\ref{section:waveparametrix} worked only for small $t,$ the
definition given here of $\lag \subset T^*(\RR\times X \times X)$ makes sense
\emph{globally} in $t,$ not merely for short time.  When $t$ is small
and positive and $y$ fixed, the projection of $\lag$ to $(x,\xi)$ is just the inward-pointing conormal bundle to an expanding
geodesic sphere centered at $y;$ when $t$ exceeds
the injectivity radius of $X,$ $\lag$ ceases to be a conormal bundle,
but remains a well-behaved smooth Lagrangian.

Let us now return from our lengthy digression on the construction of
$\lag$ to recall what it gets us.
Solving the eikonal equation, i.e.\ choosing $\lag,$ has reduced our
error term by one order, and we have achieved
$$
\Box u \in I^{3/4}(\RR\times X\times X,\lag);
$$
to proceed further, we invoke Property~\eqref{fioproperty:commutator}
of FIO's, to compute
$$
\sigma_{3/4}(\Box u) = i^{-1} \hamvf \sigma_{-1/4}(u);
$$
setting this equal to zero yields our first transport equation, and it is solved by simply insisting
that $\sigma_{-1/4}(u)$ be constant along the flow, hence equal to
$1,$ its value at $t=0$ (which was dictated by our $\delta$-function initial data).

Now we have achieved $\Box u =r_{-1/4} \in I^{-1/4}$
Adding an element $u_{-5/4}$ of $I^{-5/4}(\RR\times X\times X,\lag)$ to
 solve this error away and again applying \eqref{fioproperty:commutator}
 yields the transport equation
$$
i^{-1} \hamvf (\sigma_{-5/4}(u_{-5/4})) = -\sigma_{-1/4} (r_{-1/4}),
$$
which we may solve as before.
Continuing in this manner and asymptotically summing the resulting terms, we have our
parametrix $u \in I^{-1/4}(\RR\times X \times X,\lag).$

\

Now we describe, \emph{very roughly}, how to use the FIO calculus to compute
the singularities of $\Tr U(t)$ at lengths of closed geodesics.

Let $T$ denote the operator $\CI(\RR\times X \times X) \to \CI(\RR)$
given by\footnote{It is here that our omission of density factors
  becomes most serious: $T$ should really act on \emph{densities}
  defined along the diagonal, so that the integral over $X$ is
  well-defined.  Fortunately, $U$ itself should be a
  \emph{right}-density (i.e.\ a section of the density bundle lifted
  from the right factor); restricted to the diagonal, this yields a
  density of the desired type.}
$$
T: f(t,x,y) \mapsto \int_X f(t,x,x) \, dx.
$$
Thus, $\Tr U = T (U),$ and we seek to identify this composition as a
Lagrangian distribution on $\RR^1;$ such a distribution is thus
conormal to some set of points; as we saw above (and will see
again below) these points may only be the lengths of closed geodesics,
together with $0.$

The Schwartz kernel of $T$ is the distribution
$$
\delta(t-t') \delta (x-y)
$$
on $\RR\times \RR \times X \times X;$ it is thus conormal to
$t=t',x=y,$ i.e.\ is a Lagrangian distribution with respect to the Lagrangian
$$
\{t=t',x=y,\tau=-\tau',\xi=-\eta\}
$$
Noting that if we reshuffle the factors into $(\RR\times X) \times
(\RR\times X),$ the distribution $\delta(t-t')\delta(x-y)$ becomes the kernel of the
identity operator, we can easily see that the order of this Lagrangian
distribution is $0.$ Thus,
$$
T \in I^0(\RR\times \RR \times X \times X,\Gamma')
$$
where the relation $\Gamma: T^*(\RR\times X\times X) \to T^*\RR$ maps
as follows:
$$
\Gamma(t,\tau,x,\xi,y,\eta) =\begin{cases}
\emptyset,\ \text{if } (x,\xi) \neq (y,-\eta)\\
(t,\tau),\  \text{if } (x,\xi)=(y,-\eta).
\end{cases}
$$

Let $\lag$ be the Lagrangian for our parametrix $u$ constructed above.
If an interval about $L\in \RR$ contains no lengths of closed geodesics, then
we see that no points in  $\lag$ lie over $\{(x,\xi)=(y,-\eta)\}$ for
$t$ near $L,$ hence $\Gamma(\lag)$ has no points over this
interval, i.e.\ the composition $Tu$ is smooth in this interval.
\emph{This gives another proof of the Poisson relation,} Theorem~\ref{theorem:poisson}.

If, by contrast, there is a closed geodesic of length $L,$ then
$$
\{(L,\tau): \tau<0\} \in \Gamma( \lag).
$$
Note that in effect we get a contribution from every $(x,\xi)$ lying
along the geodesic, and that in particular, the fiber over $(L,\tau)$ of the
projection on the left factor
$$
\left(T^*\RR \times \Delta_{T^*(\RR \times X\times X) \times
    T^*(\RR\times X\times X)}\right) \cap\left(\Gamma \times \lag\right) \to
 T^*\RR
$$
(giving the composition $\Gamma(\lag)$)
consists of at least a whole geodesic of length $L$, rather than a single point.
Thus, the composition of these canonical relations \emph{is not
  transverse} and the machinery described thus far does not apply.  In
\cite{Duistermaat-Guillemin}, Duistermaat-Guillemin remedied this
deficiency by constructing a
theory of composition of FIO's with canonical relations intersecting
\emph{cleanly}. 
\begin{definition}
Two manifolds $X,Y$ intersect \emph{cleanly} if $X\cap Y$ is a
manifold with $T(X\cap Y) =TX \cap
TY$ at points of intersection.
\end{definition}
For instance, pairs of coordinate axes intersect cleanly but not
transversely in $\RR^n.$ In general, in the notation of
Property~\eqref{fioproperty:alg}, if the intersection of the
product of canonical relations $\Gamma_1 \times \Gamma_2$ with the
partial diagonal
$T^*X \times \Delta \times T^*Z$ is clean, we define the
\emph{excess}, $e,$ to be the dimension of the fiber of the projection
from this intersection to $T^*X \times T^*Z;$ this is zero in the case
of transversality.  Duistermaat-Guillemin show:
$$
S\circ T \in I^{m+m'+e/2}(X \times Z, (\Gamma_1\circ \Gamma_2)')
$$
i.e.\ composition goes as before, but with a change in order.  In
addition the symbol of the product is obtained by \emph{integrating} the
product of the symbols over the $e$-dimensional fiber of the
projection in what turns out to be an invariant way.

Let us now assume that there are finitely many closed geodesics of
length $L,$ and that they are \emph{nondegenerate} in the following
sense.  For each closed bicharacteristic (i.e.\ lift to $S^*X$ of a
closed geodesic) $\gamma \subset S^*X,$ pick a
point $p \in \gamma$ and let $Z\subset S^*X$ be a small patch
of a hypersurface through $p$ transverse to $\gamma.$  Shrinking
$Z$ as necessary, we can consider the map $P_\gamma: Z \to
Z$ taking a point to its first intersection with $Z$ under
the bicharacteristic flow on $S^*X.$  This is called a \emph{Poincar\'e map}.  Since
$P_\gamma(p)=p,$ we can consider $dP_\gamma: T_pZ \to T_pZ.$  We say
that the closed geodesic is \emph{nondegenerate} if $\Id-d P_\gamma$ is
invertible.  Note that this condition is independent of our choices of
$p$ and $Z,$ as are the eigenvalues of $\Id-d P_\gamma.$

The following is due to Duistermaat-Guillemin \cite{Duistermaat-Guillemin}:
\begin{theorem}\label{theorem:DG}
Assume that all closed geodesics of length $L$ on $X$ are
nondegenerate.  Then
$$
\lim_{t\to L} (t-L) \Tr U(t) = \sum_{\gamma \text{ of length L}}
\frac{L}{2\pi} i^{\sigma_\gamma} \abs{\Id-dP_\gamma}^{-1/2},
$$
where $P_\gamma$ is the Poincar\'e map corresponding to the geodesic
$\gamma,$ and $\sigma_\gamma$ is the Morse index of
the variational problem for the energy with periodic boundary conditions, evaluated at $\gamma$.
\end{theorem}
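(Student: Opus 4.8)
The plan is to exploit the parametrix $u\in I^{-1/4}(\RR\times X\times X,\lag)$ constructed in \S\ref{section:waveparametrix}, together with the group property of $U(t)=e^{-it\sqrt\Lap}$ and Property~\eqref{fioproperty:alg} (see the commentary there), which together show that $U(t)$ agrees with a global version of this Lagrangian distribution for \emph{all} $t$, with $\lag$ the global geodesic flowout described in \S\ref{section:wavetraceredux}. Since $U-u\in\CI$, its trace contributes nothing near $t=L$, so it suffices to analyze $Tu$, where $T\colon f(t,x,y)\mapsto\int_X f(t,x,x)\,dx$ is the FIO in $I^0(\RR\times\RR\times X\times X,\Gamma')$ with canonical relation $\Gamma$ as in \S\ref{section:wavetraceredux}. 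Thus the whole problem reduces to computing the composition $\Gamma\circ\lag$ near $t=L$ and reading off the leading singularity of the resulting Lagrangian distribution on $\RR$.

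First I would establish the clean-intersection picture. Over a point $(L,\tau)$ with $\tau<0$, the fiber of the projection $(T^*\RR\times\Delta)\cap(\Gamma\times\lag)\to T^*\RR$ is the set of $(x,\xi)\in S^*X$ (rescaled so $\tau=-\abs{\xi}_g$) fixed by time-$L$ geodesic flow, i.e.\ exactly the closed geodesics $\gamma$ of length $L$. Each such $\gamma$ is a smooth $1$-dimensional submanifold, and I would check that the nondegeneracy hypothesis---$\Id-dP_\gamma$ invertible---is precisely the condition making $\Gamma\times\lag$ meet $T^*\RR\times\Delta\times(\text{pt})$ \emph{cleanly} along $\gamma$ with excess $e=1$: the tangent to the intersection is spanned by the flow direction, and the transverse directions are controlled by $\Id-dP_\gamma$. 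Since there are finitely many such $\gamma$, the Duistermaat--Guillemin clean-composition theorem (\cite{Duistermaat-Guillemin}) applies and yields $Tu\in I^{-1/4+0+e/2}(\RR,N^*\{L\})=I^{1/4}(\RR,\{t=L\})$ near $t=L$; moreover, since $\lag\subset\{\tau<0\}$, only the $\tau<0$ component of $N^*\{L\}$ appears.

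Next comes the symbol computation, which is the heart of the matter. A conormal distribution in $I^{1/4}$ at a point of $\RR$, microlocalized on a single component of the conormal, has leading singularity proportional to $\hsigma_{1/4}(Tu)\cdot(t-L-i0)^{-1}$, so $\lim_{t\to L}(t-L)\Tr U(t)$ is a universal constant times the value of $\hsigma_{1/4}(Tu)$ at the relevant point. By the clean-composition symbol rule, this value is the \emph{integral over the $e=1$ fiber} $\gamma$ of the product $\hsigma_0(T)\cdot\hsigma_{-1/4}(u)$, summed over $\gamma$ of length $L$. The symbol of $T$ is the trivial ``delta along the diagonal'' symbol, so the content is $\hsigma_{-1/4}(u)$ restricted to $\lag$ over $\gamma$. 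Here I would track the density/Keller--Maslov line bundle $L$ of Property~\eqref{fioproperty:symb}: the initial condition $u|_{t=0}=\delta(x-y)$ fixes $\hsigma_{-1/4}(u)$ along $\lag_0$ to be the canonical half-density coming from the $\delta$-function, and Property~\eqref{fioproperty:commutator} (the transport equation $\sigma(\Box u)=i^{-1}\hamvf\sigma(u)$, in its invariant Lie-derivative form) propagates it along $\hamvf$. Going once around $\gamma$, the half-density part returns multiplied by a factor of modulus $\abs{\det(\Id-dP_\gamma)}^{-1/2}$ (the Jacobian of the transverse return map on half-densities), while the Keller--Maslov factor acquires monodromy $i^{\sigma_\gamma}$, with $\sigma_\gamma$ the Morse index, i.e.\ the number of conjugate points along $\gamma$; finally the arclength integration over the closed orbit $\gamma$ contributes the length $L$. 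Collecting the constants gives $\sum_\gamma\frac{L}{2\pi}i^{\sigma_\gamma}\abs{\Id-dP_\gamma}^{-1/2}$.

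The main obstacle is precisely this last step: making the half-density and Maslov bookkeeping rigorous and invariant. It requires unpacking the line bundle $L$ that the text has deliberately suppressed, showing that its parallel transport along $\hamvf$ around $\gamma$ produces exactly $i^{\sigma_\gamma}\abs{\det(\Id-dP_\gamma)}^{-1/2}$, and checking that the clean-composition integral reproduces the normalization $1/2\pi$. Matching the cleanness/excess computation to the nondegeneracy hypothesis, and handling the sign conventions that link the $(t-L-i0)^{-1}$ singularity to the stated limit, are the secondary technical points; full details are in \cite{Duistermaat-Guillemin}.
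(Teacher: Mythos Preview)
Your proposal is correct and follows essentially the same route as the paper: the paper itself does not prove this theorem but only outlines the approach---compose the trace FIO $T$ with the global parametrix $u$, observe that nondegeneracy gives clean intersection with excess $e=1$ so that $Tu\in I^{1/4}(\RR,\{t=L,\tau<0\})$, and then refer to \cite{Duistermaat-Guillemin} for the symbol computation---and you reproduce exactly this outline with a bit more detail on where the individual factors come from. One small caveat: the factor $\abs{\Id-dP_\gamma}^{-1/2}$ arises from the excess-density in the clean-composition formula rather than from monodromy of the half-density symbol of $u$ around $\gamma$, but you rightly flag this whole step as the place where the real work (deferred to \cite{Duistermaat-Guillemin}) lies.
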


A proof of this theorem requires understanding the symbol of the clean
composition $Tu$ (where $u$ is our parametrix for the half-wave
equation).  This lies beyond the scope of these notes.  We merely note
that we are in the setting of clean composition with excess $1,$ hence
locally near $t=L,$
$$
Tu \in I^{0-1/4+1/2}(\RR, \{t=L,\tau<0\}).
$$
This Lagrangian is easily seen to be parametrized, locally near $t=L,$ by the phase
function with one fiber variable\footnote{This phase function should
  of course be modified to make it smooth across $\theta=0,$ but
  making this modification will only add a term in $\CI(\RR)$ to the
  Lagrangian distribution we write down.}
$$
\phi(t,\theta) = \begin{cases} (t-L)\theta, \theta<0,\\ 0, \theta\geq 0;\end{cases}
$$
hence we may write
$$
Tu =(2\pi)^{-3/4} \int_0^\infty a(t,\theta) e^{-i(t-L)\theta} \, d\theta,
$$
where $a \in S^{0}(\RR\times \RR)$ has an asymptotic expansion $a \sim
a_0+ \abs{\theta}^{-1} a_{-1}+\dots.$ Our task is to find the
leading-order behavior of $Tu,$ and this is of course dictated by its
principal symbol.  To top order, $a$ is given by the constant function
$a_0(L,1),$ hence $Tu$ is (to leading order) a universal constant
times $a_0(L,1)$ times the Fourier transform of the Heaviside
function, evaluated at $t-L.$ Thus, the limit in the statement of the
theorem is, up to a constant factor, just the value of $a_0(L,1).$ The
whole problem, then, is to compute the principal symbol of this clean
composition, and we refer the interested reader to
\cite{Duistermaat-Guillemin} for the (rather tricky) computation.\footnote{We note
  that the factor $i^{\sigma_\gamma}$ is the contribution of the
  (in)famous Keller-Maslov index, and is in many ways the subtlest
  part of the answer.}

\section{A global calculus of pseudodifferential operators}

\subsection{The scattering calculus on $\RR^n$}
We now return to some of the problems discussed in
\S\ref{section:prequel}, involving operators on noncompact manifolds.
Recall that the Morawetz estimate on $\RR^n,$ for instance, hinged
upon a \emph{global} commutator argument, involving the commutator of
the Laplacian with $(1/2)(D_r +D_r^*)$ on $\RR^n.$ Generalizing this
estimate to noncompact manifolds will require some understanding of
differential and pseudodifferential operators that is uniform near
infinity.  Recall that thus far, we have focused on the calculus of
pseudodifferential operators on compact manifolds; in discussing
operators on $\RR^n,$ we have avoided as far as possible any
discussion of asymptotic behavior at spatial infinity.  Thus, our next
step is to discuss a calculus of operators---initially just on
$\RR^n$---that involves sensible bounds near infinity.

Thus, let us consider pseudodifferential symbols defined on all of
$T^*\RR^n$ with no restrictions on the support in the base variables,
with asymptotic expansions in \emph{both} the base and fiber
variables, both separately and jointly.  To this end, note that
changing to variables $\abs{x}^{-1},\hat{x},$ $\abs{\xi}^{-1},$ and
$\hat \xi$ amounts to \emph{compactifying} the base and fiber
variables of $T^*\RR^n$ radially, to make the space $B^n_x \times
B^n_\xi,$ with $B^n$ denoting the closed unit ball.  (Recall that we
defined a radial compactification map in \eqref{RC}, and that while
$\ang{\xi}^{-1}$ and $\ang{x}^{-1}$ are what we should really use as
defining functions for the spheres at infinity, $\abs{\xi}^{-1}$ and
$\abs{x}^{-1}$ are acceptable substitutes as long as we stay away from the
origin in the corresponding variables.)  The space $B^n
\times B^n$ is a \emph{manifold with codimension-two corners,} i.e.\ a
manifold locally modelled on $[0,1) \times [0,1) \times \RR^{2n-2};$
its boundary is the union of the two smooth hypersurfaces
$S^{n-1}_x\times B^n_\xi$ and $B^n_x \times S^{n-1}_\xi.$ In our local
coordinates, $\abs{x}^{-1}$ and $\abs{\xi}^{-1}$ are the defining
functions for the two boundary hypersurfaces, i.e.\ the variables
locally in $[0,1),$ while a choice of $n-1$ of each of the $\hat x$
and $\hat \xi$ variables gives the remaining $\RR^{n-2}.$

\begin{figure}[h]
\setlength{\unitlength}{0.0003in}
\begingroup\makeatletter\ifx\SetFigFont\undefined%
\gdef\SetFigFont#1#2#3#4#5{%
  \reset@font\fontsize{#1}{#2pt}%
  \fontfamily{#3}\fontseries{#4}\fontshape{#5}%
  \selectfont}%
\fi\endgroup%
{\renewcommand{\dashlinestretch}{30}
\begin{picture}(6892,6747)(0,-10)
\put(6462,5262){$\sigma$}
\thicklines
\path(5712,6312)(5112,6312)
\blacken\path(5352.000,6372.000)(5112.000,6312.000)(5352.000,6252.000)(5352.000,6372.000)
\path(6312,6012)(6312,5412)
\blacken\path(6252.000,5652.000)(6312.000,5412.000)(6372.000,5652.000)(6252.000,5652.000)
%\put(4887,6612){$\rho$}
\put(4887,6412){$\rho$}
\put(1662,6237){$B^n \times S^{n-1}$}
\put(6387,3237){$S^{n-1}\times B^n$}
\put(6162,6237){$S^{n-1}\times S^{n-1}$}
\thinlines
\texture{55888888 88555555 5522a222 a2555555 55888888 88555555 552a2a2a 2a555555 
	55888888 88555555 55a222a2 22555555 55888888 88555555 552a2a2a 2a555555 
	55888888 88555555 5522a222 a2555555 55888888 88555555 552a2a2a 2a555555 
	55888888 88555555 55a222a2 22555555 55888888 88555555 552a2a2a 2a555555 }
\shade\path(12,6012)(6012,6012)(6012,12)
	(12,12)(12,6012)
\path(12,6012)(6012,6012)(6012,12)
	(12,12)(12,6012)
\end{picture}
}
\caption{The manifold with corners $B^n\times B^n$ in the case $n=1.$
  At the top (and bottom) are the boundary faces from $B^{n}\times
  S^{n-1}$ arising from the compactification of the second
  factor---this is ``fiber infinity.''  At left (and right) are the
  faces from $S^{n-1}\times B^n,$ arising from compactification of the
  first factor---this is ``spatial infinity.''  The corner(s) at which
  these faces meet is $S^{n-1}\times S^{n-1}.$ The functions
  $\rho=\abs{x}^{-1}$ and $\sigma=\abs{\xi}^{-1}$ can be locally taken as
  defining functions for the spatial infinity resp.\ fiber infinity
  boundary faces.  The disconnectedness of $B^{n}\times S^{n-1}$ and
  $S^{n-1}\times B^n$ is of course a feature unique to dimension one.}
\end{figure}

We now let\footnote{This space should really be called $S^{m,l}_{\cl,\SC},$
  with the $\cl$ once again indicating ``classicality'' (as opposed to
  Kohn-Nirenberg type of estimates alone).  We omit the $\cl$ so as
  not to clutter up the notation.}
$$
S^{m,l}_{\SC}(T^*\RR^n)
$$
denote the space of $a \in \CI(T^*\RR^n)$ such that\footnote{We are
  abusing notation here by ignoring the diffeomorphism of radial
  compactification, thus identifying $\CI(B^n\times B^n)$ directly
  with a space of functions on $\RR^n \times \RR^n.$ }
\begin{equation}\label{scatteringsymbols}
\ang{\xi}^{-m}\ang{x}^{-l} a \in \CI(B^n\times B^n).
\end{equation}
This condition gives asymptotic expansions (i.e., Taylor series) in various regimes:
\begin{equation}
\begin{aligned}
a(x,\xi)&\sim \sum \abs{\xi}^{m-j}
a_{\bullet,j}(x,\hat{\xi}), \text{ as } \xi \to \infty,\ x \in U
\Subset \RR^n \cong (B^n)^\circ\\
a(x,\xi)&\sim \sum \abs{x}^{l-i}
a_{i,\bullet}(\hat x,\xi), \text{ as } x \to \infty,\ \xi \in V
\Subset \RR^n \cong (B^n)^\circ\\
a(x,\xi)&\sim \sum \abs{x}^{l-i} \abs{\xi}^{m-j} 
a_{ij}(\hat{x},\hat{\xi}), \text{ as } x,\xi \to \infty.
\end{aligned}
\end{equation}
Finally, let
$$
\Psisc^{m,l}(\RR^n)
$$
denote the space consisting of the (left) quantizations of these
symbols.  The ``sc''
stands for ``scattering.''\footnote{This is a space of operators
  considered by many authors; as we are following roughly the
  treatment of Melrose \cite{Melrose:spectral}, we have adopted his
  notation for the space.  Note, however, that we have reversed the
  sign from his convention for the order $l$.}

This is an algebra of pseudodifferential operators, containing all
ordinary pseudodifferential operators on $\RR^n$ with compactly
supported Schwartz kernels.  The algebra of scattering
pseudodifferential operators enjoys all the good properties of our
usual algebra, plus some more that derive from its good behavior at
infinity.  We can compose operators to get new operators, and if $A
\in \Psisc^{m,l}(\RR^n),$ $B \in \Psisc^{m',l'}(\RR^n),$ we have $AB
\in \Psisc^{m+m',l+l'}(\RR^n).$ Likewise, adjoints preserve orders.
What is novel here, however, is the principal symbol map.

As the symbols defined by \eqref{scatteringsymbols} are those that, up
to overall factors, are smooth functions on $B^n \times B^n,$ we can
define the \emph{principal symbol} of order $m,l$ of the operator
$\Op(a)$ as 
$$
\hsigma_{m,l}(A)= \ang{\xi}^{-m} \ang{x}^{-l} a |_{\pa(B^n \times B^n)};
$$
this can be further split into pieces corresponding to the
restrictions to the two boundary hypersurfaces:
$$
\hsigma_{m,l}(A) = (\hsigma_{m,l}^\xi(A), \hsigma_{m,l}^x(A))
$$
where
$$
\hsigma_{m,l}^{\xi}(A)(x,\hat\xi) \in \CI(B^n \times S^{n-1})
$$
is nothing but the ordinary principal symbol, rescaled by a power of
$\ang{x},$ and
$$
\hsigma_{m,l}^x(A)(\hat x,\xi) \in
\CI(S^{n-1}\times B^n)
$$
is the novel piece of the symbol, measuring the behavior of the
operator at spatial infinity.  Note that these two pieces of the
principal symbol are not independent: they must agree at the
\emph{corner,} $S^{n-1} \times S^{n-1}.$  We may also choose to think
of the principal symbol as
$$
\sigma^{m,l}(A)\in S^{m,l}_{\SC}(T^*\RR^n)/S^{m-1,l-1}_{\SC}(T^*\RR^n),
$$
and we will often confuse the symbol with its equivalence class; this
is usually less confusing than keeping track of the rescaling factor
$\ang{\xi}^m \ang{x}^l.$

The principal symbol short exact sequence thus reads:
$$
0 \to \Psisc^{m-1,l-l}(\RR^n)  \to \Psi^{m,l}(\RR^n)
\stackrel{\hsigma_{m,l}}{\to} \CI(\pa(B^n \times B^n)) \to 0.
$$
Thus, vanishing of this symbol yields improvement in both orders at
once; correspondingly, vanishing of one part of the symbol gives
improvement in just one order:
$$
0 \to \Psisc^{m-1,l}(\RR^n)  \to \Psi^{m,l}(\RR^n)
\stackrel{\hsigma_{m,l}^\xi}{\to} \CI(B^n \times S^{n-1}) \to 0,
$$
$$
0 \to \Psisc^{m,l-1}(\RR^n)  \to \Psi^{m,l}(\RR^n)
\stackrel{\hsigma_{m,l}^x}{\to} \CI(S^{n-1} \times B^n) \to 0.
$$

The symbol of the product of two scattering operators is indeed the
product of the symbols,\footnote{It is exactly this innocuous
  statement, which the reader might think routine, that separates the
  scattering calculus from many other choices of pseudodifferential
  calculus on noncompact manifolds: typically the ``symbol at
  infinity'' (here $\hsigma_{m,l}^x(\hat x, \xi)$) will compose under operator
  composition in a more complex, noncommutative way.}
 as (equivalence classes of) smooth functions on $\pa(B^n \times B^n).$

The symbol of the commutator of two scattering operators (which is of
lower order than the product in both filtrations) is, as one might
suspect, given by $i$ times the Poisson bracket of the symbols.

The residual calculus is particularly nice in this setting: instead of
merely consisting of smoothing operators, it consists of operators
that are ``Schwartzing''---they create decay as well as smoothness:
$$
R \in \Psisc^{-\infty,-\infty}(\RR^n) \Longleftrightarrow R:
\schwartz'(\RR^n) \to \schwartz(\RR^n).
$$

One problem with using the ordinary calculus for global matters is
that we can only conclude compactness of operators of negative order
for compactly supported operators.  Here, we have a much more precise
result:
\begin{proposition}
An operator in $\Psisc^{0,0}(\RR^n)$ is bounded on $L^2(\RR^n);$ an
operator of order $(m,l)$ with $m,l<0$ is compact on $L^2(\RR^n).$
\end{proposition}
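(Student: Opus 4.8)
The plan is to prove $L^2$-boundedness first, by the H\"ormander square-root trick transplanted into the scattering calculus, and then to deduce compactness from it by a cut-off-and-approximate argument. The one structural feature of $\Psisc^{*,*}(\RR^n)$ that does the work here, and that the ordinary calculus on $\RR^n$ lacks, is that the principal symbol lives on the \emph{compact} boundary $\pa(B^n\times B^n)$; everything else mirrors the compact-manifold story.

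For boundedness, let $A\in\Psisc^{0,0}(\RR^n)$ and set $B=A^*A\in\Psisc^{0,0}(\RR^n)$ (legitimate by the algebra property). Its symbol $\hsigma_{0,0}(B)=\lvert\hsigma_{0,0}(A)\rvert^2$ is a nonnegative smooth function on $\pa(B^n\times B^n)$, so $\Lambda:=1+\sup\lvert\hsigma_{0,0}(A)\rvert^2<\infty$ and $\Lambda\,\Id-B$ has symbol bounded below on both boundary hypersurfaces, i.e.\ is elliptic in the scattering sense. Exactly as in Exercise~\ref{exercise:squareroot}---using the scattering symbol exact sequences, the quantization map, and asymptotic summation, all of which hold verbatim---one constructs a self-adjoint $C\in\Psisc^{0,0}(\RR^n)$ with $C^*C-(\Lambda\,\Id-B)=R\in\Psisc^{-\infty,-\infty}(\RR^n)$. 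Since $R$ has Schwartz kernel in $\schwartz(\RR^n\times\RR^n)$ it is Hilbert--Schmidt, hence $L^2$-bounded; pairing $C^*C=\Lambda\,\Id-B+R$ with $u\in\schwartz(\RR^n)$ gives $\norm{Au}^2=\Lambda\norm{u}^2-\norm{Cu}^2+\ang{Ru,u}\le(\Lambda+\norm{R})\norm{u}^2$, and $A$ extends to a bounded operator on $L^2$. (Alternatively, $S^{0,0}_{\SC}\subset S^0_{\KN}$, so one may just invoke the Calder\'on--Vaillancourt theorem.) What matters below is that the resulting operator norm is controlled by finitely many of the scattering-symbol seminorms of $A$.

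For compactness, write $A=\Op(a)$ with $a\in S^{m,l}_{\SC}(T^*\RR^n)$, $m,l<0$, and note first that every element of $\Psisc^{-\infty,-\infty}(\RR^n)$ is compact, being Hilbert--Schmidt as above. Fix $\psi\in\CcI(\RR^n)$ with $\psi\equiv1$ near $0$, put $\psi_\rho(x,\xi)=\psi(x/\rho)\psi(\xi/\rho)$, and set $a_\rho=\psi_\rho a$, a smooth symbol with compact support in $(x,\xi)$. Then $\Op(a_\rho)$ has Schwartz kernel $(2\pi)^{-n}\int a_\rho(x,\xi)e^{i(x-y)\cdot\xi}\,d\xi$, which is smooth, compactly supported in $x$, and---by integration by parts in $\xi$---rapidly decreasing in $y$; thus $\Op(a_\rho)\in\Psisc^{-\infty,-\infty}(\RR^n)$ is compact. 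It remains to show $\Op(a_\rho)\to A$ in operator norm. We have $A-\Op(a_\rho)=\Op(b_\rho)$ with $b_\rho=(1-\psi_\rho)a$, supported where $\abs{x}\ge\rho/2$ or $\abs{\xi}\ge\rho/2$. On that set $\abs{b_\rho}\le\abs{a}\lesssim\ang{\xi}^m\ang{x}^l\lesssim\rho^{-\delta}$ with $\delta:=\min(-m,-l)>0$ (where $\abs{x}\ge\rho/2$ bound $\ang{x}^l\lesssim\rho^l$ and $\ang{\xi}^m\le1$; where $\abs{\xi}\ge\rho/2$ bound $\ang{\xi}^m\lesssim\rho^m$ and $\ang{x}^l\le1$). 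Differentiating, each $\pa_x$ or $\pa_\xi$ falling on $a$ lowers the corresponding scattering order by one, so preserves the $\rho^{-\delta}$ bound, while each derivative falling on $\psi_\rho$ contributes an extra $\rho^{-1}\le1$; hence every $S^{0,0}_{\SC}$-seminorm of $b_\rho$ is $O(\rho^{-\delta})$. By the boundedness just proved, $\norm{A-\Op(a_\rho)}_{L^2\to L^2}=O(\rho^{-\delta})\to0$, so $A$ is an operator-norm limit of compact operators and is therefore compact.

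The main obstacle, such as it is, is the symbol estimate on $b_\rho=(1-\psi_\rho)a$: one must check that \emph{all} the relevant seminorms decay uniformly like $\rho^{-\delta}$, and this is precisely the point at which the \emph{joint} negativity of $m$ and $l$ is essential---if either order were $0$ the cut-off tail $b_\rho$ would no longer be small, which is why the proposition fails for $m=0$ or $l=0$. The remaining ingredients (the scattering square-root construction, the Hilbert--Schmidt/compactness property of $\Psisc^{-\infty,-\infty}$, and the identification of compactly supported symbol quantizations as residual operators) are routine consequences of the structure of the calculus.
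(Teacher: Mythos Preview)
The paper states this proposition without proof, so there is no argument to compare against; your proof is correct and is the natural one in this context.

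A couple of minor remarks on exposition. In the square-root step, you might emphasize more explicitly that the finiteness of $\Lambda=1+\sup|\hsigma_{0,0}(A)|^2$ is exactly where compactness of $\partial(B^n\times B^n)$ enters---this is the structural improvement over the ordinary calculus on $\RR^n$ that you allude to, and it is the reason the H\"ormander trick goes through globally. In the compactness step, your symbol estimate on $b_\rho$ is correct but the sentence ``each derivative falling on $\psi_\rho$ contributes an extra $\rho^{-1}\le1$'' slightly undersells the bookkeeping: when a $\xi$-derivative lands on $\psi(\xi/\rho)$ you indeed gain $\rho^{-1}$, but you simultaneously lose one order of $\langle\xi\rangle^{-1}$ from the $a$ factor. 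The point is that such a term is supported where $|\xi|\sim\rho$, so $\langle\xi\rangle^{-1}\sim\rho^{-1}$ there and the loss is recouped; the analogous remark applies in $x$. With that clarification, every Kohn--Nirenberg (or scattering) seminorm of $b_\rho$ is $O(\rho^{-\delta})$ as you claim, and the operator-norm convergence follows.
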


Associated to the expanded notion of symbol, there is are associated
notions of ellipticity (nonvanishing of the principal symbol) and of
$\WF'$ (lack of infinite order vanishing of the total symbol).  We
have an associated family of Sobolev spaces:
$$
u \in \Hsc^{m,l}(\RR^n) \Longleftrightarrow \forall A
\in \Psisc^{m,l}(\RR^n),\ Au \in L^2(\RR^n).
$$
Operators in the calculus act on this scale of Sobolev spaces in the
obvious way.  Since smoothing operators are ``Schwartzing,'' it is not
hard to see that
$$
\Hsc^{-\infty,-\infty}(\RR^n)=\schwartz(\RR^n).
$$
(We will return to an explicit description of these
Sobolev spaces shortly.)

There is also an associated wavefront set:
$$
\WFsc u \subset \pa(B^n\times B^n)
$$
is defined by
$$
p \notin \WFsc u \Longleftrightarrow \text{ there exists } A \in
\Psisc^{0,0}(\RR^n), \text{ elliptic at } p, \text{ with } Au \in \schwartz.
$$
In $(B^n_x)^\circ \times S^{n-1}_\xi\subset\pa(B^n \times B^n),$
(i.e., in the
usual cotangent bundle of $\RR^n$) this definition just coincides with
ordinary wavefront set; but ``at infinity,'' i.e.\ in $S^{n-1}_x
\times B^n_\xi,$ it measures something new.  To see what, let us
consider some examples.

\begin{example}\
\begin{enumerate}
\item \emph{Constant coefficient vector fields on $\RR^n:$}  If $v \in \RR^n$
  and 
  $P=i^{-1} v\cdot \nabla,$ then, we can write
$$
P =\Opl(v \cdot \xi);
$$
the principal symbol is thus
$$
\sigma_{1,0}(P) = v\cdot \xi
$$
\item Likewise, the symbol of the Euclidean Laplacian $\Lap$ is
  $\sigma_{2,0}(\Lap)=\abs{\xi}^{2}.$ Note that the Laplacian is
  \emph{not elliptic} in the scattering calculus, as its principal
  symbol vanishes at $\xi=0$ on the boundary face $S^{n-1}_x \times
  B^n_\xi.$ This should come as no suprise, as $\Lap$ has nullspace in
  $\schwartz'(\RR^n)$ (given by harmonic polynomials) that does not lie in
  $L^2,$ hence is not consistent with elliptic regularity in the
  scattering calculus sense: if $Q$ is elliptic in the scattering
  calculus, $$Q u \in \schwartz(\RR^n)\Longrightarrow u \in
  \schwartz(\RR^n).$$

  On the other hand,
  consider $\Id +\Lap.$  We have $\Id \in \Psisc^{0,0}(\RR^n),$ hence
  adding it certainly does not alter the ``ordinary'' part of the
  symbol, living on $(B^n)^\circ \times S^{n-1}.$  But
  it \emph{does} affect the symbol in $S^{n-1}\times B^n:$ we have
$$
\sigma_{2,0}(\Id +\Lap) = 1+\abs{\xi}^2;
$$
$\Id+\Lap$ \emph{is} an elliptic operator in the scattering calculus,
and of course it is
the case that $(\Id+\Lap)u \in \schwartz(\RR^n)$ implies
that $u$ is likewise Schwartz.

\item If we vary the metric from the Euclidean metric to some other
  metric $g,$ we may or may not obtain a scattering differential
  operator; for example, if $g$ were periodic, we certainly would not,
  as the total symbol of $\Lap$ would clearly lack an asymptotic
  expansion as $\abs{x}\to \infty.$  Suppose, however, that we may write in spherical
  coordinates on $\RR^n$
$$
g= dr^2 +r^2 \sum h_{ij}(r^{-1}, \theta) d\theta^i d\theta^j \quad
\text{for } r>R_0 \gg 0.
$$
where $h_{ij}$ is a smooth function of its arguments, and
$$h_{ij}(0,\theta) d\theta^i d\theta^j$$ is the standard metric on the
``sphere at infinity.''
We will call such a metric \emph{asymptotically Euclidean}.  Then the corresponding Laplace
operator is in the scattering calculus.
\begin{exercise}
Check that this operator does lie in the scattering calculus.
\end{exercise}
Let $\Lap$ denote the Laplacian with respect to an asymptotically Euclidean
  metric.  Then $$(\Id+\Lap)^{-1} \in \Psisc^{-2,0}(\RR^n).$$
\item $\ang{x}^2 (\Id +\Lap) \in \Psisc^{2,2}(\RR^n)$ and has symbol
  $\ang{x}^2(1+\abs{\xi}^2).$  This is globally elliptic.
\end{enumerate}
\end{example}

By the last example, we find that
$$
u \in \Hsc^{2,2}(\RR^n) \Longleftrightarrow \ang{x}^2 (\Id+\Lap) u \in L^2(\RR^n);
$$
interpolation and duality arguments allow us to conclude more
generally that the scattering Sobolev spaces coincide with the usual
weighted Sobolev spaces:
$$
\Hsc^{m,l}(\RR^n) = \ang{x}^{-l} H^m(\RR^n).
$$

We now turn to some examples illustrating the scattering wavefront
set.  Consider the plane wave
$$
u(x) = e^{i\alpha \cdot x}.
$$
We have
$$
(D_{x^j}-\alpha_j) u =0 \text{ for all } j=1,\dots,n.
$$
The symbol of the operator $D_{x^j}-\alpha_j$ is $\xi_j-\alpha_j,$
hence the intersection of the characteristic sets of these operators
is just the points in $S^{n-1} \times B^n$ where $\xi=\alpha.$ As a
consequence, we have
$$
\WFsc (e^{i\alpha \cdot x}) \subseteq \{(\hat x, \xi) \in S^{n-1} \times \RR^n: \xi=\alpha\}
$$
(here we are as usual identifying $(B^n)^\circ \cong \RR^n$).  In fact this
containment turns out to be equality, as we see by the following
characterization of scattering wavefront set.
\begin{proposition}
Let $p=(\hat x_0, \xi_0) \in S^{n-1}\times \RR^n.$  We have
$$
p \notin \WFsc u
$$
if and only if there exist cutoff functions $\phi \in \CI_c(\RR^n)$
nonzero at $\xi_0$ and $\gamma \in \CI(\RR^n)$ nonzero in a conic
neighborhood of the direction $\hat x_0$ such that
$$
\phi \mathcal{F}(\gamma u) \in \schwartz(\RR^n).
$$
\end{proposition}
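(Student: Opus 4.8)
The plan is to mimic the proof of Proposition~\ref{proposition:wfcharacterization}, but with the roles of base and fibre variables interchanged, since the boundary face $S^{n-1}_x\times B^n_\xi$ of $B^n\times B^n$ is precisely where spatial decay lives. The central object is the operator
$B\colon u\mapsto \mathcal{F}^{-1}\big(\phi\,\mathcal{F}(\gamma u)\big)$, where I take $\gamma(x)=\psi(\hat x)\chi(\abs{x})$ with $\psi$ a bump on $S^{n-1}$ nonzero at $\hat x_0$ and $\chi$ equal to $1$ near infinity, and $\phi\in\CI_c(\RR^n)$ supported in a small compact neighborhood of $\xi_0$ with $\phi(\xi_0)\neq 0$. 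First I would record that the Schwartz kernel of $B$ is $(2\pi)^{-n}\int \gamma(y)\phi(\xi)e^{i(x-y)\cdot\xi}\,d\xi$, i.e.\ $B=\Opr(\gamma(y)\phi(\xi))$. Since $\phi$ has compact $\xi$-support and $\gamma$ extends to a smooth function on the radial compactification $B^n_x$, the symbol $\gamma(x)\phi(\xi)$ lies in $S^{0,0}_{\SC}$ (in fact in $S^{-\infty,0}_{\SC}$), so $B\in\Psisc^{-\infty,0}(\RR^n)\subset\Psisc^{0,0}(\RR^n)$. Because of the compact $\xi$-support, $\WF' B$ is empty on the fibre-infinity face $(B^n_x)^\circ\times S^{n-1}_\xi$; on the spatial-infinity face it is contained in $\supp\psi\times\supp\phi$. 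Finally, $\hsigma^x_{0,0}(B)(\hat x_0,\xi_0)=\psi(\hat x_0)\phi(\xi_0)\neq 0$, so $B$ is elliptic at $p$.

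The converse direction is then immediate. If $\phi\,\mathcal{F}(\gamma u)\in\schwartz(\RR^n)$, then, since $\mathcal{F}^{-1}$ is an isomorphism of $\schwartz(\RR^n)$, we get $Bu\in\schwartz(\RR^n)$; and $B\in\Psisc^{0,0}(\RR^n)$ is elliptic at $p$, so by the definition of $\WFsc$ we conclude $p\notin\WFsc u$.

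For the forward direction, suppose $p\notin\WFsc u$, so that there is $A\in\Psisc^{0,0}(\RR^n)$, elliptic at $p$, with $Au\in\schwartz(\RR^n)$. Ellipticity is an open condition, so $A$ is elliptic on some neighborhood $\mathcal{U}$ of $p$ in $\partial(B^n\times B^n)$. I would then shrink the cutoffs $\psi,\phi$ so that $\supp\psi\times\supp\phi\subset\mathcal{U}$ (legitimate because $\xi_0\in\RR^n=(B^n_\xi)^\circ$, so $\supp\phi$ can be taken compact in $\xi$); combined with the vanishing of $\WF' B$ at fibre infinity, this forces $\WF' B$ into the set where $A$ is elliptic. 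By the scattering analogue of the microlocal elliptic parametrix theorem, there is $Q\in\Psisc^{0,0}(\RR^n)$ with $QA=\Id+E$ and $\WF' E\cap\WF' B=\emptyset$. Writing $B=BQA-BE$ and using $\WF'(BE)\subseteq\WF' B\cap\WF' E=\emptyset$, we get $BE\in\Psisc^{-\infty,-\infty}(\RR^n)$, hence ``Schwartzing.'' Therefore $Bu=(BQ)(Au)-BEu$, where $(BQ)(Au)\in\schwartz$ because $BQ$ is a scattering operator and scattering operators map $\Hsc^{-\infty,-\infty}=\schwartz$ into itself, and $BEu\in\schwartz$ because $BE\colon\schwartz'\to\schwartz$. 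So $Bu\in\schwartz$, i.e.\ $\mathcal{F}^{-1}(\phi\,\mathcal{F}(\gamma u))\in\schwartz$, equivalently $\phi\,\mathcal{F}(\gamma u)\in\schwartz$.

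The hard part will be nothing deep, but rather the bookkeeping identifying $B=\mathcal{F}^{-1}\phi\,\mathcal{F}\,\gamma$ as an element of $\Psisc^{0,0}(\RR^n)$ with exactly the claimed order, microsupport, and principal symbol at $p$. The one step requiring genuine care is pinning down $\WF' B$: that it is empty at fibre infinity (this is where compact support of $\phi$ in $\xi$ is used essentially) and that it shrinks toward $p$ as the cutoffs shrink, so that it can be trapped inside the open ellipticity set of the given $A$ — this is the scattering-calculus mirror of the step in Proposition~\ref{proposition:wfcharacterization} where one fits $\Opl(\phi\gamma)^*$ inside the elliptic set. Everything else — the algebra property, the microsupport calculus, the fact that residual operators are Schwartzing, existence of microlocal elliptic parametrices, and Sobolev mapping — I would simply quote from the stated properties of the scattering calculus.
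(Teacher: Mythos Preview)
Your proposal is correct and follows exactly the route the paper indicates: the paper does not actually prove this proposition but says it ``is proved in an analogous manner'' to Proposition~\ref{proposition:wfcharacterization}, whose proof is itself left as an exercise with the hint of constructing $B=\Opl(\phi\gamma)^*$ with $\WF' B$ inside the elliptic set of the given $A$ and then invoking a microlocal parametrix---precisely what you have done, with the roles of base and fibre swapped as the scattering setting demands.
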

This is of course closely analogous to the characterization of
ordinary wavefront set in
Proposition~\ref{proposition:wfcharacterization}, and is proved in an
analogous manner.  Note that if $u$ is a Schwartz function in a set of
the form $$\big\{ \big\lvert \frac{x}{\abs{x}}-\hat x_0 \big\rvert<
\epsilon, \abs{x}>R_0\big\}$$
for any $\ep>0,$ $R_0\gg 0,$ then there is no scattering wavefront set at points of the
form $(\hat x_0,\xi)$ for any $\xi \in \RR^n.$  Thus, this new piece of the
wavefront set measures the asymptotics of $u$ in different directions
toward spatial infinity: $\hat x_0$ provides the direction, while
the value of $\xi_0$ records oscillatory behavior of a specific
frequency.

There is also, of course, a similar characterization of $\WFsc u$
inside $S^{n-1}\times S^{n-1}.$  We leave this as an exercise for the
reader.

\subsection{Applications of the scattering calculus}

As an example of how we might use the scattering calculus to obtain
global results on manifolds, let us return to the \emph{local
  smoothing estimate} from \S\ref{subsec:introschrodinger}.  Recall
that if $\psi$
satisfies the Schr\"odinger equation \eqref{scheqn} on $\RR^n$ with
initial data $\psi_0 \in H^{1/2},$ this estimate (or, at least, one
version of it) tells us that
\begin{equation}\label{locsmoothing}
\psi \in L^2_\loc(\RR_t; H^{1}_\loc (\RR^n)),
\end{equation}
hence the solution is (locally) half a derivative smoother than the data, on average.
How might we obtain this estimate on a manifold, with $\Lap$ replaced
by the Laplace-Beltrami operator (which we also denote $\Lap$)?  For a
start, note that \eqref{locsmoothing} fails badly on compact
manifolds; in particular, recall that since $[\Lap,\Lap^s]=0$ for all
$s \in \RR,$ the $H^s$ norms are conserved under the evolution, hence
if $\psi_0 \notin H^s,$ with $s>1/2,$ then we certainly do not
have\footnote{Note that this argument fails on $\RR^n$ exactly because
  of the distinction between local and global Sobolev regularity:
  there is nothing preventing a solution on $\RR^n$ with initial data
  in $H^{1/2}$ from being locally $H^1$---or even smooth on
  arbitrarily large compact sets---in return for having nasty behavior
  near infinity.} $\psi \in L^2_\loc(\RR_t; H^s).$ So if we seek a
broader geometric context for this estimate, we had better try
noncompact manifolds.

Recall that we initially obtained the estimate by a commutator
argument with the 
Morawetz commutant
$$
\pa_r +\frac{n-1}{2r},
$$
which actually gave more information; we noted that we could, instead,
have used a simpler commutant $f(r) D_r,$ with $f(r)=0$ near $r=0,$
nondecreasing, and equal to $1$ for $r\geq 2$ (say): this gives a
commutator with a term
$$
\chi'(r) D_r^2
$$
which, when paired with $\psi$ and integrated in time, tests for $H^1$
regularity in an annular neighborhood of the origin (which could have
been translated to be anywhere); other terms in the commutator are
positive also, modulo estimable error terms, and we thus obtain the
local smoothing estimate.  Generalizing this is tricky, as
the positivity of the symbol of the term
$$
i[\Lap,D_r]
$$
on $\RR^n$ is delicate: the symbol of this commutator is given by
the Poisson bracket
$$
\{\abs{\xi}^2, \xi\cdot \hat{x}\}= 2 \xi \cdot
\pa_x(\xi\cdot\hat{x})= \frac{2}{\abs{x}} \big(
\abs{\xi}^2-(\xi\cdot \hat{x})^2 \big)
$$
which is nonnegative but does actually vanish at $\xi \parallel x,$
i.e.\ in radial directions.  If we perturb the Euclidean metric a bit,
and replace $\abs{\xi}^2$ with $\abs{\xi}^2_g,$ the symbol of the
Laplace-Beltrami operator, but leave the inner product
$\ang{\xi,x}=\sum \xi_j x^j,$ then this computation fails to give
positivity.  So we have to be more careful.  We might try to adapt
$\sum \xi_j x^j$ to the new metric instead, but this is problematic,
as it doesn't really make much invariant sense.  Moreover, it seems
even more problematic upon interpretation: what positivity of
$\{\abs{\xi}^2_g, a\}$ \emph{means} is just that $a$ is increasing
along the bicharacteristic flow of $\abs{\xi}^2_g,$ i.e.\ is
increasing along (the lifts to the cosphere bundle of) geodesics.
This is clearly impossible if there are any closed (i.e., periodic) geodesics, or
indeed if there are geodesics that remain in a compact set for all
time, hence our difficulty in obtaining an estimate on compact
manifolds.
\begin{exercise}
  Suppose that a geodesic $\gamma$ remains in a compact subset of
  $\RR^n$ (equipped with a non-Euclidean metric) for all $t>0.$ Let
  $p=(\gamma(0),(\gamma'(0))^*) \in T^*\RR^n$ (with $*$ denoting dual
  under the metric).  Show that there cannot exist a smooth $a \in \CI
  (T^*\RR^n)$ with $\{\abs{\xi}^2_g, a\}\geq\ep>0$ and $a(p) \neq 0.$
\end{exercise}

\begin{definition}
Let $g$ be an asymptotically Euclidean metric on $\RR^n,$ and let
$\gamma$ be a geodesic.  We say that $\gamma$ is \emph{not trapped
  forward/backward} if
$$
\lim_{t\to \pm \infty} \abs{\gamma(t)}=\infty.
$$
We say that $\gamma$ is \emph{trapped} if it is trapped
both forward \emph{and} backward.  We also use the same notation for the
bicharacteristic projecting to $\gamma.$  Moreover, we say that a
point in $S^*\RR^n$ along a non-(forward/backward)-trapped geodesic is itself
non-(forward/backward)-trapped.
\end{definition}
It is a theorem of Doi \cite{Doi} that the local smoothing estimate
\eqref{locsmoothing} \emph{cannot hold} near a trapped geodesic.  (The
total failure of \eqref{locsmoothing} on compact manifolds should make
this plausible, but it turns out to be considerably more delicate to
show that it fails even if the only trapping is, for instance, a
single, highly unstable, closed geodesic.)  As a result we will require
some strong geometric hypotheses in in order to find a general context
in which \eqref{locsmoothing} holds.

The following is a result of Craig-Kappeler-Strauss \cite{CKS}:
\begin{theorem}
  Consider $\psi$ a solution to the Schr\"odinger equation on
  asymptotically Euclidean space, with $\psi_0 \in H^{1/2}(\RR^n).$
  The estimate \eqref{locsmoothing} holds microlocally at any
  $(x_0,\xi_0)$ that lies on a nontrapped bicharacteristic, i.e.\ for
  any $A\in \Psi^{1}(\RR^n)$ compactly supported and microsupported
  sufficiently near to $(x_0,\xi_0),$ we have for any
  $T>0,$\footnote{More generally, we can replace the Sobolev exponents
    $1/2$ and $1$ by $s$ and $s+1/2$ respectively; in particular,
    $L^2$ initial data gives an $L^2 H^{1/2}$ estimate.}
$$
\int_0^T \norm{A\psi}^2 \, dt \lesssim \norm{\psi_0}_{H^{1/2}}^2.
$$
\end{theorem}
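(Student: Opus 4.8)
The strategy is a microlocal positive commutator estimate, carried out in the scattering calculus $\Psisc^{*,*}(\RR^n)$ so as to retain control at spatial infinity, and fed into Ehrenfest's identity \eqref{Ehrenfest}. The mechanism producing the gain of half a derivative is the usual one: if $G$ has scattering order $(1,l)$ then, exactly as in Property~\eqref{property:commutator}, $i[\Lap,G]$ has order $(2,l-1)$, so pairing $i[\Lap,G]$ with $\psi$ tests $H^1$ regularity while the boundary term $\ang{G\psi,\psi}$ only tests $H^{1/2}$ regularity — and the $H^{1/2}$ norm of $\psi(t)$ is conserved, equal to $\norm{\psi_0}_{H^{1/2}}$.

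The first and central step is to construct an \emph{escape function}. Let $p=\abs{\xi}^2_g=\sigma_2(\Lap)$ and let $\gamma$ be the bicharacteristic of $p$ (the lifted geodesic) through $(x_0,\xi_0)$; since $\abs{\xi}_g$ is constant along $\gamma$ and $\xi_0\neq 0$, $\gamma$ stays away from the radial set $\{\xi=0\}$ and from fiber infinity, and by the nontrapping hypothesis $\abs{\gamma(t)}\to\infty$ as $t\to\pm\infty$, with $\gamma$ asymptotically straight near spatial infinity because the metric is asymptotically Euclidean. Using this I would build $g\in S^{1,l}_{\SC}(T^*\RR^n)$ for a suitable $l\le 0$ such that: (i) $\hamvf_p g\le -c\,\ang{\xi}^2$ on a conic neighbourhood of $(x_0,\xi_0)$ — possible precisely because $(x_0,\xi_0)$ lies on no trapped or recurrent bicharacteristic, so $g$ can be taken strictly decreasing along the flow there; (ii) near the point of the spatial-infinity face $S^{n-1}_x\times(B^n_\xi)^\circ$ to which $\gamma$ tends, $\hamvf_p g\le 0$ modulo a term with strictly more decay in $\ang{x}$, obtained from a ``radial'' escape function adapted to the asymptotically Euclidean geodesic flow; (iii) the remaining, unsigned, pieces of $\hamvf_p g$ (from cutting off away from $\gamma$ and from transition regions) are microsupported either near $(x_0,\xi_0)$, where they will be absorbed by an inductive half-derivative improvement as in the proof of Theorem~\ref{theorem:hormander}, or near spatial infinity, where the radial commutator argument gives the needed control with no a priori input. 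One glues a compact-region piece to a near-infinity piece by a phase-space partition of unity.

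Setting $G=\Op(g)\in\Psisc^{1,l}(\RR^n)$ self-adjoint, the symbol calculus (Properties~\eqref{property:symb}--\eqref{property:commutator}, plus the square-root trick) turns the above into
$$
i[\Lap,G] = A^*A + B^*B - C^*C + E,
$$
with $A\in\Psi^1(\RR^n)$ compactly supported, microsupported near $(x_0,\xi_0)$ and elliptic there (the operator in the statement), $B$ a scattering operator capturing the good term at spatial infinity, $C$ the ``source'' piece from (iii), and $E$ genuinely lower order in both filtrations. Applying \eqref{Ehrenfest} to this time-independent $G$ and integrating over $[0,T]$ gives, after rearrangement,
$$
\int_0^T\norm{A\psi}^2\,dt + \int_0^T\norm{B\psi}^2\,dt = \ang{G\psi,\psi}\big\rvert_0^T + \int_0^T\norm{C\psi}^2\,dt - \int_0^T\ang{E\psi,\psi}\,dt .
$$
The boundary term is $\lesssim\norm{\psi(t)}_{H^{1/2}}^2=\norm{\psi_0}_{H^{1/2}}^2$ since $G$ has differential order $1$ (split $\ang{D}^{1/2}$ onto each slot, absorbing the weight $\ang{x}^l$ with $l\le 0$) and the $H^{1/2}$ norm is conserved; $\int_0^T\ang{E\psi,\psi}\,dt\lesssim\norm{\psi_0}_{H^{1/2}}^2$ because the lowered orders make $E$ bounded (with $l<0$, compact) on $L^2(\RR^n)$, using $\Hsc^{1/2,l}(\RR^n)=\ang{x}^{-l}H^{1/2}(\RR^n)$ for the pieces of $E$ at infinity; and $\int_0^T\norm{C\psi}^2\,dt$ is finite by the preliminary estimate at spatial infinity together with the inductive gain near $(x_0,\xi_0)$. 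This yields $\int_0^T\norm{A\psi}^2\,dt\lesssim\norm{\psi_0}_{H^{1/2}}^2$; the general exponents $(s,s+1/2)$ follow by conjugating the whole argument by $(\Id+\Lap)^{s/2}$, which commutes with the Schr\"odinger evolution.

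Two routine points of rigour remain. The formal steps (that $A\psi,B\psi$ and so on are $L^2$, and that the integrations by parts implicit in \eqref{Ehrenfest} are legitimate) are justified by the regularization device of the ``fine print'' after Theorem~\ref{theorem:hormander}: insert smoothing factors $G_\ep=\Opl(\varphi(\ep\ang{\xi}))$, prove the estimate uniformly in $\ep$, and let $\ep\downarrow 0$; and one first proves the estimate for $\psi_0\in\schwartz(\RR^n)$, where the evolution stays Schwartz so every term is finite a priori, then extends to $\psi_0\in H^{1/2}$ by density exactly as in Exercise~\ref{exercise:schrodinger}. I expect the main obstacle to be the escape-function construction of the second paragraph — producing one symbol $g$ that is strictly monotone along the flow near $(x_0,\xi_0)$, has the right sign and decay near the point of spatial infinity to which $\gamma$ tends, and whose leftover commutator terms land in exactly the error classes that conservation of $\norm{\psi_0}_{H^{1/2}}$ and the scattering mapping properties can absorb. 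Everything else is bookkeeping with scattering orders or the by-now-standard regularization and density arguments.
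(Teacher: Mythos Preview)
Your overall strategy (Ehrenfest identity, positive commutator in the scattering calculus, boundary term controlled by conservation of $H^{1/2}$) is correct, but your organization differs from the paper's and, as written, contains a gap.

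The paper does \emph{not} build a single global escape function reaching from $(x_0,\xi_0)$ out to spatial infinity. Instead it proceeds in two decoupled steps. First, it invokes the flow-invariance result of Exercise~\ref{exercise:schrodingerinvariance}: the set of points in $S^*X$ at which microlocal $L^2_tH^1_x$ regularity holds is invariant under geodesic flow. This reduces the problem to establishing the estimate at \emph{some} point along the bicharacteristic through $(x_0,\xi_0)$. Second, it chooses that point far out near spatial infinity, in the \emph{incoming} region $\{\hat\xi\cdot\hat x<-1/4,\ \abs{x}\gg 0\}$, and uses the commutant $a=\abs{\xi}_g\,\chi(-\hat\xi\cdot\hat x)\,\chi_\delta(\abs{x})\,\chi(\abs{\xi}_g)$. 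The crucial point is that on the support of this $a$, the flow only \emph{exits} the support (both $\abs{x}$ and $-\hat\xi\cdot\hat x$ decrease), so $\hamvf_p a=-b^2-c^2$ with \emph{no} wrong-sign terms at all; there is no ``source'' piece $C$ to worry about. The non-forward-trapped case is then handled separately by the time-reversal $\psi\mapsto\overline{\psi}(T-t,\cdot)$.

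Your single-commutant approach forces you to confront cutoff errors, and your proposed treatment of them is where the gap lies. You say the unsigned pieces of $\hamvf_p g$ microsupported near $(x_0,\xi_0)$ will be ``absorbed by an inductive half-derivative improvement as in the proof of Theorem~\ref{theorem:hormander}.'' But that inductive scheme works because the error term $R$ there has genuinely \emph{lower} differential order than the main term, and one has a priori $\WF^s$-regularity to feed in. Here your source term $C$ arises from differentiating a spatial cutoff and has the \emph{same} $\xi$-order as your main term $A$ (both contribute at order $2$ to $i[\Lap,G]$), so controlling $\int_0^T\norm{C\psi}^2\,dt$ near $(x_0,\xi_0)$ is exactly the estimate you are trying to prove. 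There is no iteration available: conservation gives only $L^\infty_t H^{1/2}_x$, and one step would have to jump straight to $L^2_t H^1_x$. The fix is precisely the paper's: arrange the commutant so that the unsigned pieces live \emph{only} near spatial infinity (or avoid them entirely by working purely in the incoming region), and use flow-invariance as a separate black box to transport the conclusion back to $(x_0,\xi_0)$. A minor related point: ``nontrapped'' in the paper means escaping in at least one time direction, not both; your construction as sketched assumes $\abs{\gamma(t)}\to\infty$ for both $t\to\pm\infty$, which is stronger than needed and is why the paper treats the forward and backward cases separately.
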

\begin{proof}
  We will prove the theorem by using a commutator argument in the
  scattering calculus.  To begin, we recall from
  Exercise~\ref{exercise:schrodingerinvariance} that the set along
  which microlocal $L^2_\loc H^{1}$ regularity holds is invariant
  under the geodesic flow.  Hence it suffices just to obtain
  regularity of this form \emph{somewhere} along the geodesic
  $\gamma.$ The convenient place to do this is out near infinity.

  In order to make a commutator argument, note that it is very useful
  to have a quantity that behaves monotonically along the flow.  We
  refer to points in $T^*\RR^n$ near infinity (i.e.\ for $\abs{x}\gg
  0$) as \emph{incoming} if $\hat \xi \cdot \hat x<0$ and
  \emph{outgoing} if $\hat \xi \cdot \hat x>0$ (this corresponds to
  moving toward or away from the origin, respectively, under
  asymptotically Euclidean geodesic flow).  Heuristically, under the
  classical evolution, points move from being incoming to being
  outgoing.  More precisely, we observe that the Hamilton vector field
  of $p\equiv \sigma_{2,0}(\Lap)$ is given by
$$
\hamvf_p= -\sum \xi_i \xi_j \frac{\pa g^{ij}(x)}{\pa x^k} \pa_{\xi_k}+2\sum
\xi_i g^{ij}(x) \pa_{x^j}.
$$
Recalling that $g^{ij}$ has an asymptotic expansion with leading term
given by the identity metric, we can write this as
\begin{equation}\label{hamvfp}
\hamvf_p =2\xi \cdot \pa_x + O(\abs{x}^{-1}\abs{\xi})\pa_x
+O(\abs{x}^{-1}\abs{\xi}^2) \pa_\xi
\end{equation}
(where in fact the whole vector field is homogeneous of degree $1$ in
$\xi$).
\begin{exercise}
Verify \eqref{hamvfp}.
\end{exercise}
Thus,
$$
\hamvf_p (\hat\xi\cdot \hat x) = \frac{\abs\xi}{\abs{x}}
\left(1-(\hat\xi\cdot\hat x)^2\right)+ O (\abs{\xi} \abs{x}^{-1}).
$$
This is thus positive, as long as $\hat\xi \cdot \hat x$ is away from
$\pm 1$, and $\abs{x}$ is large,\footnote{Largeness of $\xi$ plays no role
  because of homogeneity of the Hamilton vector field of the principal
  symbol of $\Lap.$} i.e., as long as we stay away from precisely
incoming or outgoing points.  Thus, we
manufacture a scattering symbol for a commutant that has increase owing to the
increase in ``outgoingness:'' Let $\chi(s)$ denote a smooth function
that equals $0$ for $s<1/4$ and $1$ for $s>1/2,$ with $\chi'$ a square
of a smooth function, nonzero in the interior of its support.  Let $\chi_\delta(s)=\chi(\delta s).$ We choose
$$
a(x,\xi)= \abs{\xi}_g \chi(-\hat\xi\cdot \hat x) \chi_\delta(\abs{x})\chi(\abs{\xi}_g).
$$
Thus $a$ is supported at \emph{incoming} points at which $\abs{x} \geq
1/(4\delta)\gg 0;$ the first $\chi$ factor localizes near incoming
points, and the factor of $\chi_\delta$ keeps $\abs{x}$ large.  (The
factor $\chi(\abs{\xi}_g)$ simply cuts off near the origin in $\xi$ to
yield a smooth symbol.)  Under the flow on the support of $a,$ $x$ tends to
decrease and we become more outgoing, so the tendency is the
\emph{leave} the support of $a$ along the flow.  This is the
essential point in the following:
\begin{exercise}
Check that $a \in S^{1,0}_\SC(T^*\RR^n)$ and that if $\delta$ is
chosen sufficiently small, we may write
$$
\hamvf_p a = -b^2 -c^2
$$
where
\begin{enumerate}
\item
$b\in S^{1,-1/2}_\SC(T^*\RR^n)$ is supported in $\supp \chi'(-\hat\xi\cdot
\hat x) \chi_\delta(\abs{x})$ 
\item $c\in S^{1,-1/2}_\SC(T^*\RR^n)$ is supported in $\supp
\chi(-\hat\xi\cdot \hat x) \chi'_\delta(\abs{x})$ and nonzero on the
interior of that set.
\end{enumerate}
(Note that $\abs{\xi}_g$ is annihilated by $\hamvf_p,$ so the terms
containing $\abs{\xi}_g$ simply do not contribute.)
\end{exercise}

Now let $A \in \Psisc^{1,0}(\RR^n)$ have principal symbol $a.$ Then we
have
$$
i[\Lap, A] = -B^*B-C^*C+R
$$
with $B=\Op (b),\ C=\Op (c)\in \Psisc^{1,-1/2}(\RR^n),$ and $R \in
\Psisc^{1,-2}(\RR^n).$

Hence,
$$
\int_0^T \norm{C \psi}^2 \, dt \leq \Big\lvert\ang{A\psi,\psi}\big\rvert_0^T\Big\rvert
 + \abs{\int_0^T \ang{R\psi,\psi}\, dt}.
$$
As $\ang{A\psi,\psi}$ is bounded by the $L^\infty H^{1/2}$ norm
of $\psi$ and hence by $\norm{\psi_0}^2_{H^{1/2}},$ and the $R$ term
likewise,\footnote{In fact, the $R$ term is considerably better than
  necessary for this step, as it has weight $-2$ rather than just $0$
  (which would be all we need to obtain the estimate).  The astute
  reader may thus recognize that we are far from using the full power
  of the scattering calculus here.  A proof of the global estimate in
  Exercise~\ref{exercise:globalsmoothing} requires a more serious use
  of the symbol calculus, however, as do the estimates which are the
  focus of \cite{CKS}, which show that microlocal decay of the initial
  data yields higher regularity of the solution along bicharacteristics.}
 we obtain
\begin{equation}\label{yahoo}
\int_0^T \norm{C \psi}^2 \, dt \lesssim \norm{\psi_0}^2_{H^{1/2}}.
\end{equation}
\begin{Exercise}\label{exercise:incoming}
Show that for any $R_0 >0,$ there exists $\delta>0$ sufficiently small
that if $(x_0,\xi_0)\in T^*\RR^n \cap \{\abs{x}<R_0\}$ lies along a
non-backward trapped bicharacteristic, some point on that
bicharacteristic with $t\ll 0$ lies in $\liptic C,$ with $C=\Op(c)$
constructed as above.

Thus, rays starting close to the origin that pass through
$\abs{x} \sim \delta^{-1}$ for $t\ll 0$ are incoming when they do so.
This is an exercise in ODE.  You might begin by showing
that if a backward bicharacteristic starting in $\{\abs{x}<R_0\}$ passes
through the hypersurface $\abs{x} =R'$ with $R'\gg 0,$ then it must
have $\hat\xi\cdot \hat x<0$ there, and that $\hat\xi\cdot\hat x$ will
keep decreasing thereafter along the backward flow.
\end{Exercise}

Given a non-backward-trapped point $q \in S^*\RR^n,$
Exercise~\ref{exercise:incoming}
tells us that we may construct a commutant $A$ as above so that the
commutator term $C$ is elliptic somewhere along the bicharacteristic
through $q.$  Equation~\ref{yahoo} tells us that we have the desired
$L^2H^{1}$ estimate on $\liptic C,$ and the flow-invariance from
Exercise~\ref{exercise:schrodingerinvariance} yields the same
conclusion at $q.$  Thus, we have proved the desired result at
non-backward-trapped points.  It remains to consider
non-forward-trapped points.

Suppose, then, that $q=(x_0,\xi_0)\in T^*\RR^n$ is non-forward-trapped; then
note that $q'=(x_0,-\xi_0)$ is non-backward-trapped.  Consider then the
function $\overline{\psi}:$ if
$$
(D_t+\Lap)\psi=0
$$
then
$$
(-D_t+\Lap)\overline{\psi}=0,
$$
i.e.\
$$
\tpsi(t,x)=\overline{\psi}(T-t,x)
$$
again solves the Schr\"odinger equation.  Of course,
by unitarity, $$\big\|\tpsi(0,x)\big\|_{H^{1/2}}=\norm{\psi_0}_{H^{1/2}}.$$
Since $q'$ is non-backward trapped, we thus find that there exists $C \in
\Psisc^{1,-1/2}(\RR^n),$ elliptic at $q',$ with
$$
\int_0^T \norm{C\tpsi}^2 \,dt \lesssim \big\|\tpsi(0,x)\big\|_{H^{1/2}}^2=
\norm{\psi_0}^2_{H^{1/2}};
$$
on the other hand,
\begin{align*}
\norm{C\tpsi(t,\cdot)}^2 &= \norm{C\overline\psi(T-t,\cdot)}^2\\ &=
\norm{\overline{C} \psi(T-t,\cdot)}^2,
\end{align*}
where
$$
C=\Opl(c(x,\xi)),\ \text{and } \overline{C} = \Opl(\overline{c}(x,-\xi));
$$
thus, $\overline{C}$ tests for regularity at $q,$ and we have obtained
the desired estimate at $q.$
\end{proof}

\begin{corollary}
On an asymptotically Euclidean space with no trapped geodesics, the
local smoothing estimate holds everywhere.
\end{corollary}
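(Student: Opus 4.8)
The corollary follows by patching together the microlocal estimate just established, so I will only sketch the argument. First I would note that the hypothesis of no trapped geodesics means precisely that every bicharacteristic of $\sigma_{2,0}(\Lap)$ in $S^*\RR^n$ is either non-forward-trapped or non-backward-trapped; hence \emph{every} point of $S^*\RR^n$ lies on a nontrapped bicharacteristic in the sense of the preceding theorem, so the microlocal local smoothing estimate holds at each such point.

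Next I would reduce \eqref{locsmoothing} to a compactness argument. Fix $\chi \in \CcI(\RR^n)$ and $T > 0$; it suffices to bound $\int_0^T \norm{\Op(\chi(x)\ang{\xi})\psi}^2\,dt$ by a constant times $\norm{\psi_0}_{H^{1/2}}^2$. The operator $A := \Op(\chi(x)\ang{\xi}) \in \Psi^1(\RR^n)$ is compactly supported, and $\WF' A = \pi^{-1}(\supp\chi)$ (the points of $S^*\RR^n$ over $\supp\chi$), a compact set, since $\ang{\xi}$ is elliptic in every fiber direction. For each $q \in \WF' A$ the theorem supplies a compactly supported $A_q \in \Psi^1(\RR^n)$, elliptic at $q$, with $\int_0^T \norm{A_q\psi}^2\,dt \lesssim \norm{\psi_0}_{H^{1/2}}^2$; put $U_q = \liptic A_q$, an open neighbourhood of $q$. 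By compactness of $\WF' A$, finitely many $U_{q_1},\dots,U_{q_N}$ cover it.

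Finally I would glue these together with a microlocal partition of unity (as in Lemma~\ref{lemma:pof1} and Exercise~\ref{exercise:pof1}): choosing operators $\Theta_j \in \Psi^0(\RR^n)$ with $\WF'\Theta_j \subset U_{q_j}$ and $\sum_j \Theta_j^2 - \Id$ microsupported off $\WF' A$, and letting $Q_j \in \Psi^{-1}(\RR^n)$ be a microlocal elliptic parametrix for $A_{q_j}$ (so that $Q_j A_{q_j} - \Id$ is microsupported off $\liptic A_{q_j} \supseteq \WF'\Theta_j$), one can write $A = \sum_j (A\,\Theta_j^2\, Q_j)\, A_{q_j} + R$ with $R \in \Psi^{-\infty}(\RR^n)$ and each factor $A\,\Theta_j^2\, Q_j \in \Psi^0(\RR^n)$, hence $L^2$-bounded. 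Applying this to $\psi$, taking $L^2_x$ norms, and using the triangle inequality gives $\norm{A\psi} \lesssim \sum_j \norm{A_{q_j}\psi} + \norm{R\psi}$; squaring, integrating over $t \in [0,T]$, and invoking the theorem's estimate for each $A_{q_j}$ together with the $L^2$-conservation $\norm{\psi(t,\cdot)} = \norm{\psi_0}$ to control the $R$ term, one obtains $\int_0^T \norm{A\psi}^2\,dt \lesssim \norm{\psi_0}_{H^{1/2}}^2$, i.e.\ the global local smoothing estimate. There is no serious obstacle here, as the entire difficulty was already disposed of in proving the microlocal theorem; the only mild care needed is in arranging the finite subcover and the partition of unity so that they match the form of the theorem's hypotheses.
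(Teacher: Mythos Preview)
Your argument is correct and is exactly the standard compactness/partition-of-unity reduction the paper leaves implicit: the corollary is stated in the paper without proof, as an immediate consequence of the preceding microlocal theorem, and what you have written is precisely the routine patching argument that justifies this. One tiny point: rather than asserting $\WF' A = \pi^{-1}(\supp\chi)$, it suffices (and is cleaner) to note only the containment $\WF' A \subseteq \pi^{-1}(\supp\chi)$, which is all you use.
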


\begin{Exercise}\emph{(Global (weighted) smoothing.)}\label{exercise:globalsmoothing}
Show that if there are no trapped geodesics, and $\psi_0 \in L^2,$ we have
$$
\int_0^T \norm{\ang{x}^{-1/2-\ep} \psi}_{H^{1/2}}^2 \,dt \lesssim  \norm{\psi_0}_{L^2}^2
$$
for every $\ep>0.$ (This is a bit involved; a solution can be found,
e.g., in Appendix II of \cite{htw1}.)
\end{Exercise}

\subsection{The scattering calculus on manifolds}\label{subsection:scmanifolds}

We can generalize the description of the scattering calculus to
manifolds quite easily, following the prescription of Melrose
\cite{Melrose:spectral}.  Let $X$ be a compact manifold
with boundary.  We will, in practice, think of the interior, $X^\circ,$ as a
noncompact manifold (with a complete metric) that just happens to
come pre-equipped with a compactification to $X.$  Our motivating
example will be $X=B^n,$ where $X^\circ$ is then diffeomorphically
identified with $\RR^n$ via the radial compactification map.  Recall
that on $\RR^n,$
radially compactified to the ball, we used coordinates near $S^{n-1},$
the ``boundary at infinity,'' given by
$$
\rho=\frac{1}{\abs{x}},\ \theta=\frac{x}{\abs{x}},
$$
where in fact $\rho$ together with an appropriate choice of $n-1$ of
the $\theta$'s furnish local coordinates near a point.  In these
coordinates, what do constant coefficient vector fields on $\RR^n$
look like?  We have
$$
\pa_{x^j} = \rho \pa_{\theta^j}-\rho \sum\theta^k \theta^j
\pa_{\theta^k}-\rho^2 \theta^j \pa_\rho.
$$
Recall moreover that functions in $\CI(B^n)$ correspond exactly, under
radial (un)compactification, to symbols of order zero on $\RR^n.$ So
in fact it is easy to check more generally that vector fields on $\RR^n$
with zero-symbol coefficients correspond exactly to vector fields on $B^n$
that, near $S^{n-1},$ take the form
$$
a(\rho,\theta) \rho^2\pa_\rho+\sum b_j(\rho,\theta) \rho \pa_{\theta^j},
$$
with $a,b_j \in \CI(B^n).$

We generalize this notion as follows.  Given our manifold $X,$ let
$\rho\in \CI(X)$ denote a \emph{boundary defining function}, i.e.\
$$
\rho\geq 0 \text{ on X, } \rho^{-1}(0)=\pa X,\ d\rho\neq 0 \text{ on }
\pa X.
$$
Let $\theta^j$ be local coordinates on $\pa X.$
We define \emph{scattering vector fields on $X$} to be those that can
be written locally, near $\pa X,$ in the form
$$
a(\rho,\theta) \rho^2\pa_\rho+\sum b_j(\rho,\theta) \rho \pa_{\theta^j},
$$
with $a,b_j \in \CI(X).$  Let
$$
\Vsc(X)=\{\text{scattering vector fields on }X\}
$$
\begin{exercise}\
\begin{enumerate}
\item Show that $\Vsc(X)$ is well-defined, independent of the choices
  of $\rho,\theta.$  
\item Let $\Vb(X)$ denote the space of smooth vector fields on $X$
  tangent to $\pa X.$  Show that
$$
\Vsc(X) = \rho \Vb(X)
$$
\item Show that both $\Vsc(X)$ and $\Vb(X)$ are Lie algebras.
\end{enumerate}
\end{exercise}

As we can locally describe the elements of $\Vsc(X)$ as the $\CI$-span
of $n$ vector fields, $\Vsc(X)$ is itself the space of sections of a
\emph{vector bundle}, denoted
$$
\Tsc X.
$$
There is also of course a dual bundle, denoted
$$
\Tscstar X,
$$
whose sections are the $\CI$-span of the one-forms
$$
\frac{d\rho}{\rho^2},\ \frac{d\theta^j}{\rho}.
$$
Over $X^\circ,$ we may of course canonically identify $\Tscstar X$
with $T^*X,$ and the canonical one-form on the latter pulls back to
give a canonical one-form
\begin{equation}\label{canoneform}
\xi \frac{d\rho}{\rho^2} + \eta \cdot \frac{d\theta}{\rho}
\end{equation}
defining coordinates $\xi,\eta$ on the fibers of $\Tscstar X.$

The scattering calculus on $\RR^n$ is concocted to contain scattering
vector fields:
\begin{exercise}
Show that $\Psisc^{1,0}(\RR^n) \supset \Vsc(B^n).$
\end{exercise}
We can, following Melrose, define the scattering calculus more
generally as follows.  Let $\Tbarscstar X$ denote the
\emph{fiber-compactification} of the bundle $\Tscstar X,$ i.e.\ we are
radially compactifying each fiber to a ball, just as we did globally
in compactifying $T^*\RR^n$ to $B^n \times B^n,$ only this time, the
base is already compact.  Now let
$$
S_{\SC}^{m,l}(\Tscstar X) = \sigma^{-m} \rho^{-l} \CI(\Tbarscstar X),
$$
where $\sigma$ is a boundary defining function for the fibers.  We can
(by dint of some work!) quantize these ``total'' symbols to a space of
operators, denoted
$$
\Psisc^{m,l}(X).
$$
(Note that in the case $X=B^n,$ we recover what we were previously
writing as $\Psisc^{m,l}(\RR^n);$ the latter usage, with $\RR^n$
instead of the more correct $B^n,$ was an abuse of
the usual notation.)
The principal symbol of a scattering operator is, in this invariant
picture, a smooth function on
$\pa(\Tbarscstar X);$ or equivalently, an equivalence class of smooth
functions on $\Tbarscstar X;$
or, in the partially uncompactified picture, an equivalence class of
smooth symbols on $\Tscstar X.$  (It is this last point of view that
we shall mostly adopt.)
In the coordinates defined by the canonical one-form
\eqref{canoneform}, we have
\begin{equation}\label{scsymbols}
\sigma_{1,0}(\rho^2 D_\rho)=\xi,\ \sigma_{1,0}(\rho D_{\theta^j}) =\eta_j.
\end{equation}

Recall that the Euclidean metric may be written in polar
coordinates as
$$
d(\rho^{-1})^2 +(\rho^{-1})^2 h(\theta,d\theta)
$$
with $h$ denoting the standard metric on $S^{n-1}.$  We can
generalize this to define a \emph{scattering metric} as one on a
manifold with boundary $X$ that can
be written in the form 
$$
\frac{d\rho^2}{\rho^4}+ \frac{h(\rho,\theta,d\theta)}{\rho^2}
$$
locally near $\pa X$, with $\rho$ a boundary defining function, and
$h$ now a smooth \emph{family} in $\rho$ of metrics on $\pa
X.$\footnote{The usual definition, as in \cite{Melrose:spectral}, is a
  little more general, allowing $d\rho$ terms in $h;$ however,
  it was shown by Joshi-S\'a Barreto that these terms can always be
  eliminated by appropriate choice of coordinates.}

\begin{exercise}\label{exercise:sclap}\
\begin{enumerate}\item
Show that if $g$ is a scattering metric on $X,$ then the Laplace
operator with respect to $g$ can be written
$$
\Lap =(\rho^2 D_\rho)^2 + O(\rho^3) D_\rho+ \rho^2 \Lap_\theta
$$
where $\Lap_\theta$ is the family of Laplacians on $\pa X$ associated
to the family of metrics $h(r,\theta,d\theta).$
\item Show that for $\lambda \in \CC,$
$$
\sigma_{2,0}(\Lap-\lambda^2) = \xi^2 +\abs{\eta}_h^2-\lambda^2.
$$
(Note that this entails noticing that you can drop the $O(\rho^3)
D_\rho$ terms for different reasons at the the two different boundary
faces of $\Tbarscstar X.$ The term $-\lambda^2$ is of course only
relevant at the $\rho=0$ face; it does not contribute to the part of the
symbol at fiber infinity, as it is a lower-order term there.)
\end{enumerate}
\end{exercise}
As a consequence of Exercise~\ref{exercise:sclap}, note as before that
for $\lambda \in \RR,$ the Helmholz operator $\Lap-\lambda^2$ is not
elliptic in the scattering sense: there are points in $\Tscstar_{\pa
  X} X$ where $\xi^2 +\abs{\eta}_h^2=\lambda^2$.

We now turn to scattering wavefront set $\WFsc$, which can, as one might
expect, be defined in the usual manner as a subset of
$$
\pa (\Tbarscstar X),
$$
hence is a subset of boundary faces at fiber infinity and at spatial
infinity (i.e., over $\pa X$). The scattering wavefront set is the obstruction to
a distribution lying in $\dot{\mathcal{C}}^\infty(X),$ where
$\dot{\mathcal{C}}^\infty(X)$ denotes the set of smooth functions on
$X$ decaying to infinite order at $\pa X.$ This space is the analogue
of the space of Schwartz functions in our compactified picture:
\begin{exercise}
Show that pullback under the radial compactification map sends
$\dot{\mathcal{C}}^\infty(B^n)$ to $\schwartz(\RR^n).$
\end{exercise}

By \eqref{scsymbols}, it is not hard to
see that
$$
(\rho^2 D_\rho-\alpha) u=0 \Longrightarrow \WFsc u\subset \{\rho=0, \xi=\alpha\},
$$
$$
(\rho D_{\theta^j}-\beta) u=0 \Longrightarrow \WFsc u\subset
\{\rho=0, \eta_j=\beta\}.
$$
The following variant provides a useful family of examples (and can be
proved with only a little more thought): if $a(\rho,\theta)$ and
$\phi(\rho,\theta) \in \CI(X),$ then\footnote{The distribution $a
  e^{i\phi}$ used here is a simple example of a \emph{Legendrian
    distribution.} The class of Legendrian distributions on manifolds with
  boundary, introduced by Melrose-Zworski \cite{Melrose-Zworski},
  stands in the same relationship to Lagrangian distributions as
  scattering wavefront set does to ordinary wavefront set.}
$$
\WFsc \big( a(\rho,\theta) e^{i\phi(\rho,\theta)/\rho}\big) =
\{(\rho=0,\theta,d(\phi(\rho,\theta)/\rho): (0,\theta) \in \esssupp a\},
$$
where $\esssupp a \subseteq \pa X$ denotes the ``essential support'' of
$a,$ i.e.\ the points near which $a$ is not $O(\rho^\infty).$

Of course, if 
\begin{equation}\label{helmholtz}
(\Lap-\lambda^2) u =f \in \dot{\mathcal{C}}^\infty(X),
\end{equation}
then we have, by microlocal elliptic regularity,
$$
\WFsc u \subset \{\rho=0,\ \xi^2 +\abs{\eta}_h^2=\lambda^2\}.
$$

In fact, there is a propagation of singularities theorem for
scattering operators of real principal type that further constrains
the scattering wavefront set of a solution to \eqref{helmholtz}: it
must be invariant under the (appropriately rescaled) Hamilton vector
field of the symbol of $\Lap-\lambda^2.$
\begin{Exercise}
Let $\omega=d(\xi\, d\rho/\rho^2+ \eta \cdot d\theta/\rho)$ and let
$$
p=\xi^2 +\abs{\eta}_h^2-\lambda^2;
$$
show that up to an overall scaling factor, the Hamilton vector field
of $p$ with respect to the symplectic form $\omega$ is, on the face, $\rho=0$ just
$$
\hamvf_p = 2\xi \eta\cdot \pa_\eta -2\abs{\eta}^2_{h_0}\pa_\xi +\hamvf_{h_0}
$$
where $h_0=h|_{\rho=0},$ and $\hamvf_{h_0}$ is the Hamilton vector field of
$h_0,$ i.e.\ (twice) geodesic flow on $\pa X.$

Show that maximally extended bicharacteristics of $\hamvf_p$ project to the
$\theta$ variables to be geodesics of length $\pi.$  (Hint:
reparametrize the flow.)

(For a careful treatment of the material in this exercise and indeed
in this section, see
\cite{Melrose:spectral}.)
\end{Exercise}

\section*{Appendix}

We give an extremely sketchy account of some background material
on Fourier transforms, distribution theory, and Sobolev spaces.
For further details, see, for instance, \cite{Taylor:PDE1} or
\cite{Hormander:v1}.

Let $\schwartz(\RR^n),$ the \emph{Schwartz space,} denote the space
$$
\{\phi \in \CI(\RR^n): \sup \abs{x^\alpha \pa_x^\beta \phi}<\infty\
\forall \alpha,\beta\},
$$
topologized by the seminorms given by the suprema.  The dual space to
$\schwartz(\RR^n),$ denoted $\schwartz'(\RR^n),$ is the space of
tempered distributions.

For $\phi\in \schwartz(\RR^n),$ let
$$
\F \phi (\xi) =(2\pi)^{-n/2} \int \phi(x) e^{-i\xi\cdot x} \, dx.
$$
Then $\F \phi \in \schwartz(\RR^n),$ too; indeed, $\F:
\schwartz(\RR^n)\to \schwartz(\RR^n)$ is an isomorphism, and its
inverse is closely related:
$$
\F^{-1} \psi (x) =(2\pi)^{-n/2} \int \psi(\xi) e^{+i\xi\cdot x} \, dx.
$$
We can, by duality, then define $\F$ on tempered distributions.

Let $\mathcal{E}'(\RR^n)$ denote the space of compactly supported
distributions on $\RR^n.$ When $X$ is a compact manifold without
boundary, we let $\mathcal{D}'(X)$ denote the dual space of $\CI(X).$

We define the ($L^2$-based) Sobolev spaces by
$$
H^s (\RR^n) = \{u \in \schwartz'(\RR^n): \ang{\xi}^s \F
u(\xi) \in L^2(\RR^n)\},
$$
where $\ang{\xi}=(1+\abs{\xi}^2)^{1/2}.$ If $s$ is a positive integer,
this definition coincides exactly with the space of $L^2$ functions
having $s$ distributional derivatives also lying in $L^2.$ We note
that the operation of multiplication by a Schwartz function is a
bounded map on each $H^s;$ this is most easily proved by interpolation
arguments similar to (but easier than) those alluded to in
Exercise~\ref{exercise:interpolation}---cf.\ \cite{Taylor:PDE1}.

Throughout these notes we will take for granted the \emph{Schwartz
  kernel theorem}, not so much as a result to be quoted but as a
world-view.  Recall that this result says \emph{any} continuous linear
operator
$$
\mathcal{S}(\RR^n)  \to \mathcal{S}'(\RR^n)
$$
is of the form
$$
u \mapsto \int k(x,y) u(y) \, dy
$$
%%%%%%%%%%% Think at all about the result on mflds?
for a unique $k \in \mathcal{S}' (\RR^n \times \RR^n);$ a corresponding
result also holds on all the manifolds that we will consider.  We thus
consistently take the liberty of confusing operators with their
Schwartz kernels, although we let $\kappa(A)$ denote the Schwartz
kernel of the operator $A$ when we wish to emphasize the difference.

Some results relating Schwartz kernels to traces are important for our
discussion of the wave trace.  Recall that an operator $T$ on a
separable Hilbert space is called \emph{Hilbert-Schmidt} if
$$
\sum_j \norm{Te_j }^2<\infty
$$
where $\{e_j\}$ is any orthonormal basis.  In the special case when
our Hilbert space is $L^2(X)$ with $X$ a manifold, the
condition to be Hilbert-Schmidt turns out to be easy to verify in
terms of the Schwartz kernel: $T$ is Hilbert-Schmidt if and only if
$\kappa(T),$ its Schwartz kernel,\footnote{It is probably best to
  think of $X$ as a \emph{Riemannian} manifold here, so that the
  Schwartz kernel is a function, which we can integrate against test
  functions via the metric density, and likewise integrate the
  kernel.} lies in $L^2(X\times X).$

A \emph{trace-class}
operator is one such that
$$
\sum_{i,j} \lvert \ang{T e_i, f_j} \rvert<\infty
$$
for every pair of orthormal bases $\{e_i\}, \{f_j\}.$  It turns out to
be the case that an operator $T$ is trace-class if and only if it can
be written
$$
T=PQ
$$
with $P,Q$ Hilbert-Schmidt.  The \emph{trace} of a trace-class
operator is given by
$$
\sum_i \ang{T e_i, e_i}
$$
over an orthonormal basis: this turns out to be well-defined.
We refer the reader to
\cite{Reed-Simon:v1} for further discussion of trace-class and
Hilbert-Schmidt operators.


\begin{thebibliography}{XX}
\bibitem{CKS}  Craig, W., Kappeler, T., Strauss, W. \emph{Microlocal dispersive smoothing for the Schr\"odinger equation,} Comm. Pure Appl. Math. 48 (1995), no. 8, 769--860. 
\bibitem{DimassiSjostrand}
Dimassi, Mouez; Sj\"ostrand, Johannes, \emph{Spectral asymptotics in the semi-classical limit,} London Mathematical Society Lecture Note Series, 268. Cambridge University Press, Cambridge, 1999.
\bibitem{Duistermaat-Guillemin} Duistermaat,
  J. J.; Guillemin, V. W. \emph{The spectrum of positive elliptic operators
  and periodic bicharacteristics,} Invent. Math. 29 (1975), no. 1,
  39--79.
\bibitem{Doi} Doi, S.-I. \emph{Smoothing effects of Schr\"odinger evolution groups on Riemannian manifolds,} Duke Math. J. 82 (1996), no. 3, 679--706.
\bibitem{Duistermaat-Hormander:FIO2} Duistermaat, J. J.; H\"ormander,
  L. \emph{Fourier integral operators. II,} Acta Math. 128 (1972),
  no. 3-4, 183--269.
\bibitem{FriedlanderJoshi} Friedlander, F. G.,
\emph{Introduction to the theory of distributions}
Second edition. With additional material by M. Joshi. Cambridge
University Press, Cambridge, 1998.
\bibitem{GrigisSjostrand} Grigis, A. and Sj\"ostrand, J.,
\emph{Microlocal analysis for differential operators. An
introduction.} London Mathematical Society Lecture Note Series,
196. Cambridge University Press, Cambridge, 1994.
\bibitem{htw1} \emph{A Strichartz inequality for the Schr\"odinger equation
on non-trapping asymptotically conic manifolds} (with Andrew Hassell and
  Terence Tao), Comm. PDE., \textbf{30} (2005), 157--205.
\bibitem{Hormander:Spectral} H\"ormander, L., \emph{The spectral function of an elliptic operator,}  Acta Math.  121  (1968), 193--218.
\bibitem{Hormander:FIO1} L. H\"ormander, \emph{Fourier Integral
    Operators I}, Acta Math. 127 (1971), 79--183.
\bibitem{Hormander:v1} H\"ormander, L. \emph{The analysis of linear partial
  differential operators. I. Distribution theory and Fourier
  analysis.} Second edition. Grundlehren der Mathematischen
  Wissenschaften,
  256. Springer-Verlag, Berlin, 1990.
\bibitem{Hormander:v2} H\"ormander, L. \emph{The analysis of linear
    partial differential operators. II. Differential operators with
    constant coefficients.} Grundlehren der Mathematischen
  Wissenschaften,
  257. Springer-Verlag, Berlin, 1983.
\bibitem{Hormander:v3} H\"ormander, L. \emph{The analysis of linear
    partial differential operators. III. Pseudodifferential
    operators.} Grundlehren der Mathematischen Wissenschaften,
  274. Springer-Verlag, Berlin, 1985.
\bibitem{Hormander:v4} H\"ormander, L. \emph{The analysis of linear
    partial differential operators. IV. Fourier integral
    operators.}  Grundlehren der
    Mathematischen Wissenschaften, 275. Springer-Verlag, Berlin, 1985.
\bibitem{Kac} Kac, M. \emph{Can one hear the shape of a drum?}
  Amer. Math. Monthly  73  1966 no. 4, part II, 1--23. 
\bibitem{Martinez}
Martinez, Andr\'e, \emph{An introduction to semiclassical and microlocal analysis,} Universitext. Springer-Verlag, New York, 2002.
\bibitem{Melrose:notes} R. Melrose {\sl Lecture notes on microlocal
    analysis,} available
  at \url{www-math.mit.edu/~rbm/Lecture_Notes.html}
\bibitem{Melrose:spectral} 
R.~B. Melrose, \emph{Spectral and scattering theory for the {L}aplacian on
  asymptotically {E}uclidian spaces}, Spectral and scattering theory (Sanda,
  1992), Dekker, New York, 1994, pp.~85--130.
\bibitem{Melrose-Zworski}
R. B. Melrose and M. Zworski, \emph{Scattering metrics and geodesic flow
  at infinity}, Invent. Math. \textbf{124} (1996), no.~1-3, 389--436.
\bibitem{Reed-Simon:v1}
Reed, Michael and Simon, Barry,  \emph{Methods of modern mathematical
  physics I: Functional analysis} Second edition, Academic Press, Inc., New York, 1980.
\bibitem{Seeley}
Seeley, R. T.,
\emph{Complex powers of an elliptic operator,} 1967 Singular Integrals (Proc. Sympos. Pure Math., Chicago, Ill., 1966) 288--307 Amer. Math. Soc., Providence, R.I. 
\bibitem{Shubin}
Shubin, M. A., \emph{Pseudodifferential operators and spectral
  theory,}  Second edition. Springer-Verlag, Berlin, 2001.
\bibitem{Stein:singular} Stein, E. M.
\emph{Singular integrals and differentiability properties of functions}
Princeton Mathematical Series, No. 30 Princeton University Press,
Princeton, N.J. 1970.
\bibitem{Taylor:pseudors} Taylor, M. E.
\emph{Pseudodifferential operators,}
Princeton Mathematical Series, 34. Princeton University Press, Princeton, N.J., 1981.
\bibitem{Taylor:PDE1} Taylor, M. E., \emph{Partial differential equations. I. Basic theory} Applied Mathematical Sciences, 115. Springer-Verlag, New York, 1996.
\bibitem{Taylor:PDE2} Taylor, M. E. \emph{Partial differential equations. II. Qualitative studies of linear equations} Applied Mathematical Sciences, 116. Springer-Verlag, New York, 1996.
\bibitem{Vasy:AdS} A. Vasy, \emph{The wave equation on asymptotically
    Anti-de Sitter spaces,} Anal. PDE, to appear.
\bibitem{Zworski} M. Zworski, \emph{Semiclassical
    analysis}, AMS Graduate Studies in Mathematics, American
  Mathematical Society, Providence, 2012.
\end{thebibliography}
\end{document}